\newlist{steps}{enumerate}{1}
\setlist[steps, 1]{label = Step \arabic*:}
\DeclareRobustCommand\widecheck[1]{{\mathpalette\@widecheck{#1}}}
\def\@widecheck#1#2{%
   \setbox\z@\hbox{\m@th$#1#2$}%
   \setbox\tw@\hbox{\m@th$#1%
      {%
         \vrule\@width\z@\@height\ht\z@
         \vrule\@height\z@\@width\wd\z@}$}%
   \dp\tw@-\ht\z@
   \@tempdima\ht\z@ \advance\@tempdima2\ht\tw@ \divide\@tempdima\thr@@
   \setbox\tw@\hbox{%
      \raise\@tempdima\hbox{\scalebox{1}[-1]{\lower\@tempdima\box\tw@}}}%
   {\ooalign{\box\tw@ \cr \box\z@}}}
\theoremstyle{plain}
\newtheorem{thm}{Theorem}[section]
\crefname{thm}{Theorem}{Theorems}
\Crefname{thm}{Theorem}{Theorems}
\newtheorem{prop}[thm]{Proposition}
\crefname{prop}{Proposition}{Propositions}
\Crefname{prop}{Proposition}{Propositions}
\newtheorem{lem}[thm]{Lemma}
\crefname{lem}{Lemma}{Lemmas}
\Crefname{lem}{Lemma}{Lemmas}
\newtheorem{cor}[thm]{Corollary}
\crefname{cor}{Corollary}{Corollaries}
\Crefname{cor}{Corollary}{Corollaries}
\crefname{claim}{Claim}{Claims}
\Crefname{claim}{Claim}{Claims}
\crefname{property}{Property}{Properties}
\Crefname{property}{Property}{Properties}
\crefname{problem}{Problem}{Problems}
\Crefname{problem}{Problem}{Problems}
\crefname{conjecture}{Conjecture}{Conjecture}
\Crefname{conjecture}{Conjecture}{Conjecture}
\theoremstyle{definition}
\newtheorem{defn}[thm]{Definition}
\crefname{defn}{Definition}{Definitions}
\Crefname{defn}{Definition}{Definitions}
\crefname{notation}{Notation}{Notations}
\Crefname{notation}{Notation}{Notations}
\crefname{convention}{Convention}{Conventions}
\Crefname{convention}{Convention}{Conventions}
\crefname{cond}{Condition}{Conditions}
\Crefname{cond}{Condition}{Conditions}
\crefname{assum}{Assumption}{Assumptions}
\Crefname{assum}{Assumption}{Assumptions}
\crefname{conj}{Conjecture}{Conjectures}
\Crefname{conj}{Conjecture}{Conjectures}
\crefname{claim1}{Claim}{Claims}
\Crefname{claim1}{Claim}{Claims}
\Crefname{ques}{Question}{Question}
\newtheorem{ques}[thm]{Question}
\crefname{que}{Question}{Question}
\Crefname{que}{Question}{Question}
\theoremstyle{remark}
\newtheorem{rem}[thm]{Remark}
\crefname{rem}{Remark}{Remarks}
\Crefname{rem}{Remark}{Remarks}
\crefname{ex}{Example}{Examples}
\Crefname{ex}{Example}{Examples}
\crefname{section}{Section}{Sections}
\Crefname{section}{Section}{Sections}
\crefname{subsection}{Subsection}{Subsections}
\Crefname{subsection}{Subsection}{Subsections}
\crefname{figure}{Figure}{Figures}
\Crefname{figure}{Figure}{Figures}
\newtheorem*{acknowledgement}{Acknowledgement}
\newcommand{\Z}{\mathbb{Z}}
\newcommand{\CP}{\mathbb{CP}}
\newcommand{\fraks}{\mathfrak{s}}
\newcommand{\frakt}{\mathfrak{t}}
\newcommand{\Diff}{\mathrm{Diff}}
\newcommand{\Homeo}{\mathrm{Homeo}}
\newcommand{\Aut}{\mathrm{Aut}}
\newcommand{\Map}{\mathrm{Map}}
\newcommand{\diag}{\mathrm{diag}}
\newcommand{\Spin}{\mathrm{Spin}}
\newcommand{\del}{\partial}
\newcommand{\im}{\operatorname{Im}}
\newcommand{\id}{\mathrm{id}}
\newcommand{\C}{\mathbb{C}}
\newcommand{\RP}{\mathbb{RP}}
\newcommand{\R}{\mathbb R}
\def\det{\operatorname{det}}
\def\dim{\operatorname{dim}}
\def\id{\operatorname{id}}
\newcommand{\mbar}[1]{{\ooalign{\hfil#1\hfil\crcr\raise.167ex\hbox{--}}}}
\def\Aut{\operatorname{Aut}}
\def\Map{\operatorname{Map}}
\def\wt{\widetilde}
    \def\HMt{%
       \setbox0=\hbox{$\widehat{\mathit{HM}}$}
       \setbox1=\hbox{$\mathit{HM}$}
       \dimen0=1.1\ht0
       \advance\dimen0 by 1.17\ht1
       \smash{\mskip2mu\raise\dimen0\rlap{%
          \begin{turn}{180}
              {$\widehat{\phantom{\mathit{HM}}}$}
           \end{turn}} \mskip-2mu    
                \mathit{HM}
                    }{\vphantom{\widehat{\mathit{HM}}}}{}}
\newcommand{\blue}[1]{\textcolor{blue}{#1}}
\title{Exotic Dehn twists on 4-manifolds}
\author{Hokuto Konno, Abhishek Mallick and Masaki Taniguchi}
\begin{document}

\maketitle

\begin{abstract}
We initiate the study of exotic Dehn twists along 3-manifolds $\neq S^3$ inside $4$-manifolds,
% by proving that the Dehn twists along certain Seifert fibered 3-manifolds yield exotic diffeomorphims on $4$-manifolds. This
which produces the first known examples of exotic diffeomorphisms of contractible 4-manifolds, more generally of definite 4-manifolds, and exotic diffeomorphisms of 4-manifolds with $\neq S^3$ boundary that survive after one stabilization.
%and exotic diffeomorphisms of small 4-manifolds which survive after one stabilization.
We also construct the smallest closed 4-manifold known to support an exotic diffeomorphism.
%We also construct the smallest closed 4-manifold known to support an exotic diffeomorphism, given by a Dehn twist.
These exotic diffeomorphisms are the Dehn twists along certain Seifert fibered 3-manifolds.
%(3) exotic Dehn twists along 3-manifolds $\neq S^3$ which survive after one stabilization.
As a consequence, we get loops of diffeomorphisms of 3-manifolds that topologically extend to some 4-manifolds $X$ but not smoothly so, implying the non-surjectivity of $\pi_1(\mathrm{Diff}(X)) \to \pi_1(\mathrm{Homeo}(X))$. 
%which we call exotic commutators. 
%To detect these exotic phenomena, we use $2$-parameter families Seiberg--Witten theory over $\mathbb{RP}^2$.
%Our method uses obstruction to algebraic relations in the mapping class groups of 4-manifolds from $2$-parameter families Seiberg--Witten theory over $\mathbb{RP}^2$
Our method uses $2$-parameter families Seiberg--Witten theory over $\mathbb{RP}^2$, while known methods to detect exotic diffeomorphisms used $1$-parameter families gauge-theoretic invariants.
Using a similar strategy, we construct a new kind of exotic diffeomorphisms of 4-manifolds, given as commutators of diffeomorphisms.
\noindent
%\begin{comment}
 %   \blue{For discussion:}    
%We produce the first known examples of: (1) exotic diffeomorphisms of contractible 4-manifolds (2) exotic diffeomorphism of definite manifolds (3) exotic diffeomorphisms of small 4-manifolds which survives after one stabilization. We also construct the smallest closed 4-manifold known to support an exotic diffeomorphism. To show this, we prove that Dehn twists along certain Seifert fibered 3-manifolds are exotic diffeomorphism of some 4-manifolds. Before this, the only other instance of an exotic Dehn twist in the literature was along $S^3$, by Kronhermier-Mrowka and Lin. As a consequence, we get loops of diffeomorphisms of 3-manifolds that topologically extend to some 4-manifolds $X$ but not smoothly so, implying the non-surjectivity of $\pi_1(\mathrm{Diff}(X)) \to \pi_1(\mathrm{Homeo}(X))$. Moreover, we construct a new kind of exotic diffeomorphism on 4-manifolds, called the exotic commutator. To detect these exotic phenomena, we develop constraints from ($2$-parameter) families Seiberg-Witten theory over $\mathbb{RP}^2$.
%*too bold --Our method hints towards an alternate method for disproving of the Smale conjecture and speculates the existence of exotic surfaces coming from exotic Dehn-twists.
%\end{comment}
\end{abstract}

%\tableofcontents

\section{Introduction}

\subsection{Background and overview}\label{1_1}

Comparison between the diffeomorphism group $\Diff(X)$ and homeomorphisms group $\Homeo(X)$ of a 4-manifold $X$ has a long history, starting with Friedman--Morgan~\cite{FM88} and Donaldson~\cite{Do90} in the study of the Donaldson invariant. Recently it became one of the active topics in 4-manifold topology through systematic applications of families gauge theory.
%, via maps between homotopy groups
%\begin{align}\label{comparison diff and homeo}
%\pi_i (\Diff (X)) \to \pi_i (\Homeo(X)) 
%\end{align}
%induced from the inclusion $\Diff(X) \hookrightarrow \Homeo(X)$.
%A representative of a non-trivial element of the kernel of \eqref{comparison diff and homeo} for $i=0$ is called an {\it exotic diffeomorphism}.
An {\it exotic diffeomorphism} represents one of the most basic discrepancy between $\Diff(X)$ and $\Homeo(X)$, where
a diffeomorphism $f : X \to X$ is said to be {\it exotic} if $f$ is topologically isotopic to the identity but not smoothly.
This is equivalent to $f$ representing a non-trivial element of the kernel of the map $\pi_0(\Diff(X)) \to \pi_0(\Homeo(X))$ induced from the forgetful map $\Diff(X) \to \Homeo(X)$, where $\Diff(X)$ and $\Homeo(X)$ are equipped with the $C^\infty$- and $C^0$-topology respectively.
If $X$ has non-empty boundary, we may consider a relative version of this notion, which is the main object of interest in this paper.
Let $\Diff(X,\del)$ denote the group of diffeomorphisms that fix $\del X$ pointwise, and similarly define $\Homeo(X,\del)$.
%We say that $f \in \Diff(X,\del)$ is exotic of {\it relatively exotic} if $f$ is topologically isotopic to the identity through $\Homeo(X,\del)$, but not smoothly isotopic to the identity through $\Diff(X,\del)$.
We say that $f \in \Diff(X,\del)$ is exotic, or {\it relatively exotic} (to avoid ambiguity), if $f$ is topologically isotopic to the identity through $\Homeo(X,\del)$, but not smoothly through $\Diff(X,\del)$.

The first examples of exotic diffeomorphisms of 4-manifolds were given by Ruberman~\cite{Ru98}, and recently several authors gave other examples \cite{BK18,KM20,JL20,IKMT22} including the relative case.
Amongst these, Kronheimer--Mrowka \cite{KM20} and Lin~\cite{JL20} proved that Dehn twists along $S^3$ is exotic on $K3\# K3$ and $K3\#K3\#S^2\times S^2$ (and on analogous examples for a homotopy $K3$ in place of $K3$ as well). These were the only known examples of exotic Dehn twists in the literature.
%The following paragraph is a rewrite the paragraph below: 
All of these known examples of exotic diffeomorphisms in the literature were detected by 1-parameter family versions of Donaldson/Seiberg--Witten/Bauer--Furuta invariants, or relative versions of such.
So far, tools to compute families gauge-theoretic invariants are quite limited, making it difficult to detect an exotic diffeomorphism of 4-manifolds with simple topology such as a definite or contractible 4-manifold. Moreover, for the same reason, it has not been possible so far to restructure the previous arguments to produce exotic Dehn twists along 3-manifolds $\neq S^3$.

\begin{comment}
     All of these known examples of exotic diffeomorphisms in the literature were detected by 1-parameter family versions of Donaldson/Seiberg--Witten/Bauer--Furuta invariants, or a relative version of such.
So far, tools to compute families gauge-theoretic invariants are quite limited, making it difficult to detect: (i) an exotic diffeomorphism of a definite 4-manifold, (ii) an exotic diffeomorphism of a small 4-manifold, such as a contractible 4-manifold, and (iii) Dehn twists along 3-manifolds $\neq S^3$. 
\end{comment}

In this article, we open up the study of exotic Dehn twists along 3-manifolds $\neq S^3$ on various types of 4-manifolds. The problem of whether the Dehn twist along a Seifert fibered 3-manifold may be smoothly non-trivial was discussed informally by Kronheimer and Ruberman during a talk by Kronheimer \cite[53rd Minute]{Krontalk}, where the authors learned this problem. 
More generally, after establishing the triviality of Dehn twists on simply-connected $4$-manifolds in the topological category, Orson--Powell~\cite[Question~1.2]{OrsonPowell2022} asked when a Dehn twist along a 3-manifold $\neq S^3$ may be smoothly non-trivial. 
%\blue{Indeed, as far as the authors are aware there was no obvious extension of the prior work in the literature that could address such questions.}

In this article, we give the first non-trivial answer to these questions.
In fact, we will show that Dehn twists along Seifert fibered 3-manifolds produce numerous interesting exotic diffeomorphisms, including the first examples of:

\begin{enumerate}
\item[(i)] exotic diffeomorphisms of definite 4-manifolds (\cref{main thm1: Dehn twist on definite}),
\item[(ii)] exotic diffeomorphisms of contractible 4-manifolds (\cref{contractible}), and
\item[(iii)] exotic diffeomorphisms of 4-manifolds with $\neq S^3$ boundary that survive after one stabilization. (\cref{thm intro: one stabilization,thm: intro stabilization in K3}).
\end{enumerate}

A Dehn twist along a Seifert fibered 3-manifold also yields the smallest example of a closed 4-manifold that supports exotic diffeomorphisms (\cref{thm: closed Dehn}).
Further, the results (i), (ii), (iii) also yield examples of loops of diffeomorphisms of 3-manifolds that topologically extend to some 4-manifolds $X$ but not smoothly so (\cref{thm: pi1 1,cor: pi1 non surj contractible,thm pi 1 Milnor necleus}).
This implies the non-surjectivity of the natural map $\pi_1(\mathrm{Diff}(X)) \to \pi_1(\mathrm{Homeo}(X))$ (\cref{thm: pi1 smooth vs. top 1,cor: pi1 non surj contractible,thm pi 1 Milnor necleus}).

%Let us clarify the difference between the known technique and a technique we shall introduce. All known examples of exotic diffeomorphisms were detected by 1-parameter family versions of Donaldson/Seiberg--Witten/Bauer--Furuta invariants, or a relative version of such. So far, tools to compute family gauge-theoretic invariants are quite limited, making it difficult to detect examples of (i), (ii), (iii) above and to handle Dehn twists along 3-manifolds $\neq S^3$.

To prove the above results,
we introduce a new technique to detect exotic diffeomorphisms:
%which is capable of capturing generalized Dehn twists:
we will use families Seiberg--Witten theory over $\RP^2$ (\cref{Scheme of the proof}), instead of computing 1-parameter families gauge-theoretic invariants.
By deploying similar strategies, we give further new examples of exotic diffeomorphisms (\cref{More exotic diffeomorphisms: exotic commutators}), which we call the \textit{exotic commutator}. We describe our results in detail below:

\subsection{Dehn twists}

To state our results, let us recall the definition of Dehn twists, following \cite{OrsonPowell2022}.
Let $Y$ be a 3-manifold, and $\phi : S^1 \to \Diff(Y)$ be a loop  based at the identity which is non-trivial in $\pi_1(\Diff(Y))$. 
On the cylinder $Y \times [0,1]$, we can define a diffeomorphism $t_Y : Y \times [0,1] \to Y \times [0,1]$ by $(y,s) \mapsto (\phi(s)(y),s)$ for $(y,s) \in Y \times [0,1]$, where we regard $S^1=[0,1]/0 \sim 1$.
Since $t_Y$ lies in  $\Diff(Y \times [0,1], \del)$, if there is an embedding of the cylinder $Y \times [0,1]$ into a 4-manifold $X$, we can implant $t_Y$ in $X$ to get a diffeomorphism of $X$.
This diffeomorphism is called the {\it Dehn twist on $X$ along $Y$}, which we denote by $t_X : X \to X$. While $t_X$ depends on $\phi$, we drop $\phi$ from our notation.

In particular, if such a $Y$ is a boundary of a $4$-manifold $X$, we have a Dehn twist supported in a collar neighborhood of the boundary of $X$, which lies in $\Diff(X,\del)$.
We call this case the {\it boundary Dehn twist}. 
In particular, since a Seifert fibered space $Y$ admits a standard circle action (see \cref{Preliminary: definition of Dehn twists and Milnor fibers}) inducing a based loop in $\Diff(Y)$, one may consider the Dehn twist along a Seifert fibered space. From now on, the Dehn twist along a Seifert manifold will refer to the Dehn twist with respect to the canonical circle action of the Seifert fibered spaces.

\subsection{Exotic Dehn twists on definite/contractible 4-manifolds}
\label{Intro: Exotic Dehn twists}

The following \lcnamecref{main thm1: Dehn twist on definite} provides the first example of
a {\it definite} 4-manifold (with or without boundary) that supports an exotic diffeomorphism.
Recall that Brieskorn spheres $\Sigma(2,3,6n+7)$ bound simply-connected, positive-definite 4-manifolds, as they are $(+1)$-surgeries of certain (twist) knots. 
We have the following:

\begin{thm}
\label{main thm1: Dehn twist on definite}
Let $n$ be any integer with $n \geq 0$  and let $X$ be any compact, oriented, positive-definite, simply-connected, smooth 4-manifold bounded by $\Sigma(2,3,6n+7)$.
Then the boundary Dehn twist $t_X : X \to X$ is relatively exotic, i.e. $t_X$ is topologically isotopic to the identity through $\Homeo(X, \partial)$, but not smoothly isotopic to the identity through $\Diff (X, \partial)$.
%In particular,  $t_X$ remains relatively exotic after taking (inner) connected sum of $X$ with arbitrarily many finite copies of $\CP^2$.
\end{thm}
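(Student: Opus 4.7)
The plan is to establish the topological triviality directly from known results and to prove the smooth non-triviality by contradiction, using the families Seiberg--Witten theory over $\RP^2$ advertised in the introduction. For the topological triviality, one invokes Orson--Powell's theorem that on any compact simply-connected 4-manifold a boundary Dehn twist is topologically isotopic to the identity rel boundary; since $X$ is simply-connected, this immediately gives that $[t_X]=1$ in $\pi_0(\Homeo(X,\del))$.

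For the smooth non-triviality, suppose for contradiction that $t_X$ is smoothly isotopic to the identity through $\Diff(X,\del)$. Let $W$ be the negative-definite plumbing (equivalently, the minimal resolution of the Brieskorn singularity $\{z_1^2+z_2^3+z_3^{6n+7}=0\}$) bounded by $-Y$ for $Y=\Si(2,3,6n+7)$; the Seifert circle action on $Y$ extends equivariantly to a loop of diffeomorphisms of $W$. Crucially, the complex conjugation $(z_1,z_2,z_3)\mapsto(\bar z_1,\bar z_2,\bar z_3)$ preserves $W$ and $Y$ and conjugates the circle action to its inverse, yielding an $O(2)$-action on the pair $(W,Y)$. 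Combining this $O(2)$-symmetry on the $W$-side with the hypothesised smooth null-isotopy of $t_X$ on the $X$-side, we assemble a smooth family $\mathcal{Z}\to\RP^2$ of closed 4-manifolds with fiber $Z := X\cup_Y W$, whose monodromy incorporates both the circle action and the $\mathbb{Z}/2\subset O(2)$-symmetry.

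The final step is to derive a contradiction from the existence of this family. One applies the $\RP^2$-parametrized Seiberg--Witten invariant developed earlier in the paper to $\mathcal{Z}\to\RP^2$ with a suitably chosen $\spinc$-structure. Because $X$ is positive-definite and $W$ is negative-definite, a gluing and neck-stretching argument should localize the parametrized moduli space on the $W$-side and identify the invariant with an $O(2)$-equivariant quantity determined by the singularity and its symmetries; this quantity is expected to be nontrivial in $H^*(\RP^2;\F)$. On the other hand, the assumption that $t_X$ is smoothly null-isotopic rel boundary forces the $X$-factor of $\mathcal{Z}$ to be a product family over $\RP^2$, which, via a parametrized gluing formula, forces the total $\RP^2$-family invariant to vanish, a contradiction. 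The main obstacle will be the non-vanishing computation on the $W$-side: the corresponding $1$-parameter $S^1$-family invariant vanishes when $X$ is definite, which is exactly why prior $1$-parameter methods fail for definite targets, so the essential new ingredient is that the additional class in $H^1(\RP^2;\F)$ coming from the $\mathbb{Z}/2$-part of the $O(2)$-symmetry supplies a nontrivial contribution.
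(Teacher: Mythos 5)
Your topological step is correct and matches the paper (the Orson--Powell theorem). The smooth step, however, has a genuine gap centered on the choice of cap. You propose to close up $X$ with the negative-definite resolution $W$ of the Brieskorn singularity. This is exactly the wrong cap: by the equivariant plumbing construction the Seifert circle action extends smoothly over $W$ (see \cref{rem: equiv plumbing}), so the boundary Dehn twist is already smoothly trivial there, and more importantly $b^+(W)=0$, so in $Z=X\cup_Y W$ the space $H^+(Z)$ comes entirely from $H^2(X;\R)$, on which every diffeomorphism in play acts trivially. Consequently the bundle $H^+(\mathcal Z)\to\RP^2$ is trivial, $w_{b^+}(H^+(\mathcal Z))=0$, and the families Seiberg--Witten constraints (\cref{thm: Baraglia}, \cref{thm: KT}) give no obstruction whatsoever. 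The paper instead caps with the \emph{Milnor fiber} $M(2,3,7)$, which has $b^+=2$, equipped with its branched-covering involution $\tilde\sigma_0$ --- a lift of $\sigma$, the rotation by $\pi$ (the ``square root'' of the Dehn-twist loop), not complex conjugation --- which acts by $-1$ on $H^+$ because the quotient is $D^4$. That is what makes $H^+(E)\cong\gamma_{\R}\oplus\gamma_{\R}$ over $\RP^2$, hence $w_2(H^+(E))\neq 0$. Your $O(2)$-symmetry picture also does not produce the needed input for the $\RP^2$ construction, which is a single diffeomorphism $f$ with $f^2$ isotopic to the identity rel boundary and $f^\ast=-1$ on $H^+$.

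Second, your final step --- localizing a parametrized invariant on the $W$-side by neck-stretching and hoping for a nontrivial $O(2)$-equivariant contribution --- is speculative and is not how the detection works. The paper's mechanism is an a priori constraint, not a non-vanishing computation: if $w_{b^+}(H^+(E))\neq 0$ then $c_1(\fraks)^2-\sigma\le 0$ for any monodromy-invariant spin$^c$ structure (\cref{thm: RP2 spinc pres}), and one exhibits a preserved $\fraks$ on $M=M(2,3,7)\cup W_1\cup(-X)$ with $c_1(\fraks)^2-\sigma(M)=8>0$, using the spin structure on the Milnor fiber, Elkies' theorem to diagonalize the form of $X$, and, for $n>0$, an equivariant $(-1)$-surgery cobordism $W_1$ from $\Sigma(2,3,7)$ to $\Sigma(2,3,6n+7)$ --- a step your proposal does not address at all. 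As written, your argument would not produce a contradiction.
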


The existence of a topological isotopy is due to Orson and Powell~\cite{OrsonPowell2022} (this remark applies to all statements in the topological category in this paper). 
Our contribution is the non-existence of a smooth isotopy.

It is worth noting that every Seifert homology sphere is known to bound a negative-definite 4-manifold on which the boundary Dehn twist is smoothly trivial rel boundary (\cref{rem: equiv plumbing}).
\cref{main thm1: Dehn twist on definite} shows that a positive-definite analog of this fact does not hold.

As a consequence, we can detect exotic diffeomorphisms of contractible 4-manifolds.
Recall that $\Sigma(2,3,13)$ and $\Sigma(2,3,25)$ bound contractible (in fact Mazur) smooth 4-manifolds, by \cite{AK79} for $\Sigma(2,3,13)$ and by \cite{Fickle} for $\Sigma(2,3,25)$, respectively.

\begin{cor}
\label{contractible}
Let $C$ be any contractible, compact, smooth 4-manifold bounded by either $\Sigma(2,3,13)$ or  $\Sigma(2,3,25)$.
Then the boundary Dehn twist $t_C : C \to C$ is relatively exotic.
\end{cor}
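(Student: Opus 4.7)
The plan is to deduce Corollary 1.3 directly from Theorem 1.1 by checking that any contractible filling of $\Sigma(2,3,13)$ or $\Sigma(2,3,25)$ fits the hypotheses placed on $X$ there. The first step is to observe that both candidate boundaries lie in the Brieskorn family covered by Theorem 1.1, since $\Sigma(2,3,13)=\Sigma(2,3,6\cdot 1+7)$ and $\Sigma(2,3,25)=\Sigma(2,3,6\cdot 3+7)$. Existence of contractible fillings, so that the corollary is non-vacuous, is already recorded in the paper: $\Sigma(2,3,13)$ bounds a Mazur-type manifold by \cite{AK79}, and $\Sigma(2,3,25)$ bounds one by \cite{Fickle}. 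Since the Dehn twist in both Theorem 1.1 and the corollary refers to the twist along the canonical $S^1$-action on the Seifert fibered boundary, the loops defining $t_X$ and $t_C$ agree.

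The second step is to verify the remaining hypotheses of Theorem 1.1 for a contractible compact smooth 4-manifold $C$ with such a boundary. Contractibility forces $C$ to be simply-connected and orientable (after choosing an orientation). Moreover, $b_2(C)=0$, so $H_2(C;\Z)=0$ and the intersection form of $C$ is the empty form; in particular it is vacuously positive-definite. Thus $C$ plays the role of $X$ in Theorem 1.1, and applying that theorem immediately yields the conclusion of Corollary 1.3: $t_C$ is topologically isotopic to $\id_C$ through $\Homeo(C,\partial)$ but not smoothly so through $\Diff(C,\partial)$.

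There is no genuine obstacle here; all technical content---in particular the detection of the smooth non-triviality of $t_C$ via the 2-parameter families Seiberg--Witten theory over $\mathbb{RP}^2$ advertised in the introduction---is already packaged inside Theorem 1.1. The one conceptual point worth emphasizing is simply that Theorem 1.1 is stated for \emph{positive-definite} simply-connected fillings rather than specifically contractible ones, and contractible fillings are the extreme case of this hypothesis in which the intersection form is empty; consequently no modification of the main theorem is required to cover the contractible situation.
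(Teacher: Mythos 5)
Your proposal is correct and is exactly how the paper obtains this corollary: $\Sigma(2,3,13)$ and $\Sigma(2,3,25)$ are $\Sigma(2,3,6n+7)$ for $n=1,3$, and a contractible compact filling is simply-connected with vanishing $b_2$, hence vacuously positive-definite, so \cref{main thm1: Dehn twist on definite} applies verbatim. No further argument is needed.
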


%\begin{itemize}
%\item[(i)] a {\it definite} 4-manifold (with or without boundary) that supports an exotic diffeomorphism.
%\item[(ii)] a {\it contractible} 4-manifold that supports an exotic diffeomorphism.
%\item[(iii)] a 3-manifold $ \neq S^3$ along which the Dehn twist produces an exotic diffeomorphism of a 4-manifold.
%\end{itemize}
\cref{contractible} gives the first example of a {\it contractible} 4-manifold that supports an exotic diffeomorphism.
Before this, the smallest known 4-manifold $X$ supporting an exotic diffeomorphism had $b_2(X)=4$ \cite[Remark~5.5]{IKMT22}.
%The point (iii) gives the first non-trivial answer to a question by Orson and Powell~\cite[Question~1.2]{OrsonPowell2022} asking when a Dehn twist along the boundary $\neq S^3$ may be smoothly non-trivial.
%For $\del X= S^3$, Kronheimer and Mrowka proved that the boundary Dehn twist on the punctured homotopy $K3$ is not relatively isotopic to the identity \cite[Proposition 1.2]{KM20} (which can be deduced also from \cite{BK19} for $K3$, as explained in \cite{KM20}), and also for $\mathring{K3}\#S^2\times S^2$ by Lin~\cite{JL20}.
Furthermore, \cref{main thm1: Dehn twist on definite} shows the Dehn twists along a general 3-manifold may survive after stabilizations by $\CP^2$, unlike the Dehn twist along $S^3$ (\cref{rem: stabilization by CP2}). 
%More generally, we will prove an analogous statement for any positive-definite simply-connected 4-manifold bounded by $\Sigma(2,3,13)$ or $\Sigma(2,3,25)$ (\cref{thm: contractible general}).

As another infinite series of examples of exotic Dehn twists, we will see:

\begin{thm}
\label{thm: More Dehn twists}
Let $n$ be any integer with $n \geq 0$, the Brieskorn sphere $\Sigma(2,3,6n+11)$ bounds a simply-connected, oriented, compact, smooth 4-manifold $X$ for which the boundary Dehn twist $t_X$ is relatively exotic.
\end{thm}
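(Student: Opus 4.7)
The plan is to adapt the strategy used for \cref{main thm1: Dehn twist on definite}. Note that \cref{thm: More Dehn twists} is only an existence statement: we only need to exhibit one simply-connected, oriented, compact, smooth $4$-manifold $X$ bounded by $\Sigma(2,3,6n+11)$ whose boundary Dehn twist is relatively exotic, which gives us considerably more flexibility than in \cref{main thm1: Dehn twist on definite}. A natural candidate for $X$ is the Milnor fiber of the Brieskorn singularity $x^2 + y^3 + z^{6n+11} = 0$, or a standard plumbed $4$-manifold on the canonical graph. Alternatively, one could try to build $X$ by attaching an appropriate $2$-handle cobordism to a manifold $X'$ bounded by $\Sigma(2,3,6n+7)$ as in \cref{main thm1: Dehn twist on definite}, and argue that relative exoticness is preserved because $t_{X'}$ is supported in a collar that survives inside $X$.

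Once $X$ is chosen, the topological isotopy from $t_X$ to the identity rel boundary follows from Orson--Powell \cite{OrsonPowell2022}, and the task reduces to ruling out a smooth isotopy. For this, I would apply the $\RP^2$-families Seiberg--Witten obstruction introduced in this paper. The canonical $S^1$-action on the Seifert fibered space $\Sigma(2,3,6n+11)$ provides the loop of diffeomorphisms used to build $t_X$, and the charge-conjugation involution on Seiberg--Witten configurations upgrades this $S^1$-family of parametrized moduli spaces to a $2$-parameter family over $\RP^2$; the non-vanishing of the resulting $\RP^2$-families invariant of $X$ would obstruct a smooth isotopy rel boundary.

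The main obstacle is the computation of this $\RP^2$-families invariant for the chosen $X$. Even though $X$ need no longer be positive-definite, one expects the argument of \cref{main thm1: Dehn twist on definite} to reduce, via neck-stretching along the boundary and the corresponding gluing formula, to a purely Floer-theoretic computation for $\Sigma(2,3,6n+11)$: namely, that the relevant $\mathrm{Pin}(2)$-equivariant Seiberg--Witten Floer homotopy (or Bauer--Furuta type) invariant of this Seifert homology sphere is non-trivial in the appropriate degree. I would trace through this reduction carefully, identifying which sign or parity condition on the Frøyshov-type invariants of $\Sigma(2,3,6n+7)$ drove the proof, and verify the analogous condition for $\Sigma(2,3,6n+11)$. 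As both $\Sigma(2,3,6m+1)$ and $\Sigma(2,3,6m+5)$ are positive scalar curvature Seifert homology spheres with well-understood equivariant Floer homotopy types, one expects the same scheme to work with only combinatorial changes; pinning down these changes is the main difficulty.
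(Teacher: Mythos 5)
Your high-level plan (exhibit an $X$, get the topological isotopy from Orson--Powell, and obstruct a smooth isotopy with the $\RP^2$-families Seiberg--Witten constraint) matches the paper's strategy, but the proposal omits the one idea that actually produces the family over $\RP^2$, and it substitutes an incorrect mechanism for it. The $\RP^2$-family does not come from any ``charge-conjugation'' symmetry of the Seiberg--Witten configuration space upgrading an $S^1$-family of moduli spaces; it is built purely topologically, via \cref{lem: RP2 family constrcution}, from a diffeomorphism $f\in\Diff(X,\del)$ that is a \emph{square root of the boundary Dehn twist}, i.e.\ $f^2=t_X$, under the assumption (to be contradicted) that $t_X$ is smoothly trivial rel boundary. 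Constructing such an $f$ that acts as $-1$ on $H^+(X)$ and preserves a suitable spin$^c$ structure is the entire content of the argument. The paper takes $X=M(2,3,11)\cup_{\Sigma(2,3,11)}W_1\cup(\Sigma(2,3,6n+11)\times[0,1])$, where $W_1$ is the equivariant negative-definite $2$-handle cobordism of \cref{lem: equiv cob} from $\Sigma(2,3,11)$ to $\Sigma(2,3,6n+11)$, and sets $f=\tilde\sigma_0\cup\tilde\sigma_1\cup(-T_{Y_1})$, with $\tilde\sigma_0$ the branched-covering involution of the Milnor fiber (\cref{lem: equiv bound2}); one then checks that the spin$^c$ structure with $c_1(\fraks)=(0,-1,\dots,-1)$ is $f$-invariant and satisfies $c_1(\fraks)^2-\sigma(X)=16>8\,\delta(\Sigma(2,3,6n+11))$, contradicting \cref{thm: rel RP2 spinc pres}. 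Your proposal never constructs $f$ and never performs this computation; it defers instead to an unspecified ``$\mathrm{Pin}(2)$-equivariant Floer homotopy type'' computation, which is not what the obstruction reduces to --- the only Floer-theoretic input is the value of the Fr\o yshov invariant $\delta$ of the boundary, fed into \cref{thm: KT}.

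Two of your intermediate steps would also fail outright. First, attaching a cobordism to a manifold $X'$ bounded by $\Sigma(2,3,6n+7)$ and arguing that ``relative exoticness is preserved because $t_{X'}$ is supported in a collar that survives inside $X$'' is false as a principle: enlarging the filling can and does trivialize the boundary Dehn twist (this is exactly \cref{thm Dehn twist stably trivial} and \cref{rem: equiv plumbing}), and in any case the boundary Dehn twist of the enlarged manifold is the twist along the \emph{new} boundary $\Sigma(2,3,6n+11)$, which is not inherited from $t_{X'}$. Second, the assertion that the relevant Brieskorn spheres carry positive scalar curvature metrics with well-understood equivariant Floer homotopy types is incorrect (apart from $S^3$ and the Poincar\'e sphere these Seifert manifolds do not admit PSC metrics) and plays no role in the actual obstruction. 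As written, the proposal is a plausible research plan rather than a proof: the existence of the square-root diffeomorphism on a concrete $X$ and the verification of the spin$^c$ Fr\o yshov inequality violation are the essential missing steps.
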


In \cref{subsection 279}, we will also discuss a method to produce further examples different from the above families, such as $\Sigma (2,7,9)$.
As mentioned in \cref{1_1}, the Theorems above also provide the first non-trivial answers to the question posed by Orson--Powell~\cite[Question~1.2]{OrsonPowell2022} (see also Kronherimer's talk \cite{Krontalk}).

%We list several different kinds of significance of \cref{main thm1: Dehn twist on definite,contractible}:
%\begin{itemize}
%\item \Cref{contractible} provides the first example of a {\it contractible} 4-manifold that supports an exotic diffeomorphism. Before this work, the smallest known 4-manifold $X$ supporting an exotic diffeomorphism has $b_2(X)=4$ \cite[Remark~5.5]{IKMT22}.
%\item \cref{main thm1: Dehn twist on definite} provides the first example of a {\it definite} 4-manifold (with or without boundary) that supports an exotic diffeomorphism.
%\item \cref{main thm1: Dehn twist on definite} provides the first example of a 3-manifold $ \neq S^3$ along which the Dehn twist produces an exotic diffeomorphism of a 4-manifold. This gives also the first non-trivial answer to a question by Orson and Powell~\cite[Question~1.2]{OrsonPowell2022} asking when a Dehn twist along the boundary $\neq S^3$ may be smoothly non-trivial.
%\item \cref{main thm1: Dehn twist on definite} implies that the Dehn twist $t_X$ remains to be exotic after taking (inner) connected sum of $X$ with arbitrarily many finite copies of $\CP^2$.  In particular, it shows that the Dehn twist along a general 3-manifold needs not to be (smoothly and) relatively isotopic to the identity after connected sums of $\CP^2$. In contrast, the Dehn twist along $S^3$ turns relatively isotopic to the identity after the connected sum of a single copy of $\CP^2$ due to Giansiracusa~\cite[Corollary~2.5]{Gian08}.
%\end{itemize}

Before moving on to the next topic, we record a question related to the 4-dimensional Smale conjecture, which we learned from Kronheimer's talk \cite{Krontalk} and which has motivated us to study exotic Dehn twists on contractible 4-manifolds.
If we take a contractible 4-manifold $C$ bounded by $\Sigma(2,3,13)$ or $\Sigma(2,3,25)$ to be a Mazur manifold following \cite{AK79,Fickle}, the double $D(C)$ of $C$ is diffeomorphic to $S^4$.

\begin{ques}[{{{\it cf.} Kronheimer \cite{Krontalk}}}]
\label{ques S4}
Is the Dehn twist on $S^4$ along $\Sigma(2,3,13)$ and/or $\Sigma(2,3,25)$ smoothly isotopic to the identity?    
\end{ques}

%Despite the recent disproof of the 4-dimensional Smale Conjecture by Watanabe~\cite{Wa18}, it is still open whether the natural map 
%\begin{align}
%\label{eq: pi0 Smale}
%\pi_0(O(4)) \to\pi_0(\Diff(S^4))
%\end{align}
%is not isomorphic.
%(See \cite{LM21} for an attempt on this map.)
%If one solves Question~\ref{ques S4} in the negative, it follows that the Dehn twist gives a non-trivial elemeint of the cokernel of \eqref{eq: pi0 Smale}.
If one solves Question~\ref{ques S4} in the negative, it will give an alternative disproof of the 4-dimensional Smale conjecture,  different from Watanabe~\cite{Wa18}.
%We already know that the Dehn twist on $S^4$ is topologically isotopic to the identity by \cite{Q86,P86}.
Note that, if the Dehn twist on $S^4=D(C)$ were not isotopic to the identity,
%the answer to Question~\ref{ques S4} is affirmative, 
then the boundary Dehn twist on $C$ must not be relatively isotopic to the identity, which has been confirmed in {\cref{contractible}}.
Thus, {\cref{contractible}} may be regarded as partial evidence suggesting that this alternative way to disprove the Smale conjecture may work.

%\begin{comment}
\begin{rem}
Readers may notice that all the examples of Dehn twists above are along Seifert fibered spaces. Indeed, as a consequence of work by Gabai \cite{gabai2001smale} hyperbolic $3$-manifolds do not admit a non-trivial loop of diffeomorphism to twist along. However, our obstruction to detect the exotic nature of Dehn twists, in principle, can be applied to twists along any integer homology spheres (whenever the twists exist). 
\end{rem}

\subsection{One stabilization is not enough for exotic diffeomorphism with $\del \neq S^3$}
\label{subsection intro One stabilization}

As is well known, most of the exotic phenomena for simply-connected 4-manifolds disappear after taking connected sums with finitely many copies of $S^2 \times S^2$.
Until recently, there was no known example of exotic phenomena for which one stabilization (i.e. connected sum of a single copy of $S^2 \times S^2$) is not enough to kill.
However, within the past few years, several authors have established theorems of the form `one stabilization is not enough' for exotic 4-manifolds, exotic embeddings, and exotic diffeomorphisms (including relative versions 
 of these questions)\cite{JL20,LM21,Kang2022,KMT22,guth2022exotic,hayden2023atomic,HaydenKangMukherjee23,KMT23first}.
For example, any relatively exotic diffeomorphism is smoothly isotopic to the identity relative to the boundary after sufficient stabilizations, as far as the boundary is (for example) a homology 3-sphere (\cite[Theorem 3.7]{Saeki06}, \cite[Theorem~B]{OrsonPowell2022}).
Thus it makes sense to ask:
\begin{ques}
Given a relatively exotic diffeomorphism $f : X \to X$ of a simply-connected compact 4-manifold $X$ with homology 3-sphere boundary, 
how many stabilizations are enough to make $f$ isotopic to the identity rel boundary?
For example, how many stabilizations are enough to make the boundary Dehn twist trivial in the relative mapping class group?
\end{ques}
Again, the authors were reminded of this stabilization problem on the boundary Dehn twist in Kronheimer's talk \cite{Krontalk}. This problem was discussed also by Krannich--Kupers in \cite{krannich2022torelli} later.

The first and only known example of an exotic diffeomorphism for which `one stabilization is not enough' is the Dehn twist along $S^3$ in the neck of $K3\#K3$, and more generally the Dehn twist along $S^3$ on a sum of two homotopy $K3$, due to Lin~\cite{JL20} by a computation of the families Bauer--Furuta invariant.
%Kronheimer and Mrowka \cite{KM20} proved that the Dehn twist along $S^3$ in the neck of $K3\#K3$ is exotic, and Lin proved that this Dehn twist is still exotic on $K3\#K3\#S^2\times S^2$.
In particular, this implies that the boundary Dehn twist of the punctured $K3$ survives after one stabilization.
We will provide the first example of exotic diffeomorphism (in the form of exotic Dehn twists) of $4$-manifolds with $\partial \neq S^3$ which survive after one stabilization.

To state the result, let $M(p,q,r)$ denote the Milnor fiber, whose boundary is given by $\del M(p,q,r) = \Sigma(p,q,r)$. 
%Further, let $N(n)_{p,q}$ denote the logarithmic transformations of $N(n)$ for $p,q \geq 1$, following the notation in \cite{GS99}, which are mutually exotic for $\{p,q\} \neq \{p',q'\}$.
%See Gompf--Stipsicz [Corollary 3.3.7].
%Recall that the boundaries are given by $\del M(p,q,r) = \Sigma(p,q,r)$ and $\del N(n) = -\Sigma(2,3,6n-1)$. 
%$\del N(n) = \del N(n)_{p,q} = -\Sigma(2,3,6n-1)$. 
For a 4-manifold $X$ with boundary, let $X\# S^2 \times S^2$ denote the inner connected sum of $S^2 \times S^2$.
We will prove:

\begin{thm}
\label{thm intro: one stabilization}
Let $X$ be either the Milnor fiber $M(2,3,7)$ or $M(2,3,11)$.
Then the boundary Dehn twist $t_X : X \to X$ is relatively exotic.
Moreover, $t_X \# \id_{S^2 \times S^2} : X \# S^2 \times S^2 \to X \# S^2 \times S^2$ is still relatively exotic.
\end{thm}

The assertion of 
\cref{thm intro: one stabilization} for $M(2,3,7)$, whose intersection form is $-E_8\oplus 2\begin{pmatrix}
0 & 1\\
1 & 0
\end{pmatrix}$, also gives the smallest 4-manifold which has been shown to support exotic diffeomorphisms surviving after one stabilization.
Before this, the smallest known such 4-manifold was $\mathring{K3}$, the punctured $K3$, deduced from Lin's result \cite{JL20}.
%Note that there are infinitely many distinct smooth 4-manifolds homeomorphic to $N(2)$, such as logarithmic transformations $N(2)_{p,q}$ and knot surgeries $N(2)_K$ along a torus in $N(2)$ by knots $K$ \cite{Gompf_Nuclei,GS99,FS_knot_surgery}.

In contrast to \cref{thm intro: one stabilization}, any Seifert homology sphere is known to bound a complex surface on which the boundary Dehn twist is smoothly trivial rel boundary (\cref{rem: equiv plumbing}).
So the boundary Dehn twist on a 4-manifold bounded by a Seifert 3-manifold may or may not be trivial even among complex surfaces.

\cref{thm intro: one stabilization} is a direct consequence of the following.
Recall that the Milnor fibers $M(2,3,7)$ and $M(2,3,11)$ are naturally embedded in $K3$ (see such as \cite[The sentences after Theorem 8.3.2]{GS99}).
%, and the complement of $M(2,3,11)$ in $K3$ is $N(2)$.
In particular, $\Sigma(2,3,7)$ and $\Sigma(2,3,11)$ are naturally embedded in $K3$. 
We will prove:

\begin{thm}
\label{thm: intro stabilization in K3}
%Let $\mathring{K3}$ be a punctured $K3$ obtained by removing a 4-ball in $K3 \setminus \mathop{\mathrm{Int}}M(2,3,7)$.
Let $\mathring{K3}$ be a punctured $K3$ surface obtained by removing a $4$-ball in $K3$ that is disjoint from $M(2,3,7)$. Then the Dehn twist along $\Sigma(2,3,7)$ on $\mathring{K3}$ is relatively exotic.
Moreover, if we form the connected sum $\mathring{K3}\# S^2 \times S^2$ by attaching $S^2\times S^2$ away from $\Sigma(2,3,7)$, the Dehn twists along $\Sigma(2,3,7)$ on $\mathring{K3}\# S^2 \times S^2$ is still relatively exotic.
%An analogous result holds also for any homotopy $K3$ obtained by replacing $K3 \setminus \mathop{\mathrm{Int}}M(2,3,7)$ with a smooth 4-manifold homeomorphic to $K3 \setminus \mathop{\mathrm{Int}}M(2,3,7)$.
Analogous results hold also for $M(2,3,11)$ 
 and $\Sigma(2,3,11)$ in place of $M(2,3,7)$ 
 and $\Sigma(2,3,7)$.
\end{thm}

We will give another example of an exotic Dehn twist that survives after one stabilization in
\cref{one stabilization 279} that is slightly different from the one above.

\begin{rem}
Our technique using the $2$-parameter families Seiberg--Witten theory over $\RP^2$ can recover a part of the results by Lin~\cite{JL20} that the boundary Dehn twist of the punctured $K3$ is relatively exotic even after one stabilization (\cref{thm Lin}).
\end{rem}

\subsection{Exotic Dehn twists on closed 4-manifolds}

The proof of \cref{main thm1: Dehn twist on definite} indeed yields exotic Dehn twists on closed 4-manifolds:

\begin{thm}
\label{thm: closed Dehn}
For any $n \geq 0$,
there exists a smooth 4-manifold $M$ homeomorphic to
$2\CP^2\#(n+11)(-\CP^2)$ such that $M$ admits a smooth embedding of $\Sigma(2,3,6n+7)$ along which the Dehn twist $t_M : M \to M$ is exotic.

Moreover, if we form the connected sum $M' = M\#k(-\CP^2)$ for $k \geq 1$ away from $\Sigma(2,3,6n+7) \subset M$, 
$t_{M}$ is still exotic on $M'$.
\end{thm}

Here are comparisons of \cref{thm: closed Dehn}  with other known results:
\begin{itemize}
\item \cref{thm: closed Dehn} gives the smallest example of closed 4-manifolds that support exotic diffeomorphisms: the previous smallest known example was due to Ruberman~\cite{Ru98}, $4\CP^2\#21(-\CP^2)$. This was in fact the first example of an exotic diffeomorphism.

\item Before this result, the only known examples of exotic Dehn twists on closed 4-manifolds were the Dehn twist along $S^3$ in the neck of $K3\#K3$ due to Kronheimer and Mrowka~\cite[Teorem~1.1]{KM20}, and the Dehn twist on $K3\#K3\#S^2 \times S^2$ along the neck between two copies of $K3$ due to Lin~\cite{JL20}.

\item \cref{thm: closed Dehn} shows that the Dehn twist on a closed 4-manifold along a general 3-manifold need not be isotopic to the identity after connected sums of $-\CP^2$.
In contrast, it follows from \cite[Theorem 2.4]{Gian08} that
the Dehn twist along embedded $S^3$ turns isotopic to the identity after the connected sum of a single copy of $-\CP^2$.
\end{itemize}

\subsection{Non-extendable loops of diffeomorphisms}
\label{Non-extendable loops of diffeomorphisms}
We will now reformulate our results regarding exotic Dehn twists in terms of loops of diffeomorphisms of Seifert fibered 3-manifolds that topologically extend to some 4-manifolds but not smoothly so (we thank Mark Powell for pointing this out).
This also yields comparison results on $\pi_1$ of automorphism groups.
%We can do such a translation whenever we have an exotic boundary Dehn twist, but we shall describe only the result corresponding to \cref{main thm1: Dehn twist on definite} below to avoid redundancy.

Let us first make the meaning of the extendability clear.
Recall that, for a general orientable closed 3-manifold $Y$, the inclusion map $\Diff(Y) \hookrightarrow \Homeo(Y)$ is a weak homotopy equivalence (see \cite{Hat80,Ce68,Ha83}).
In particular, we may identify $\pi_i(\Diff(Y))$ with $\pi_i(\Homeo(Y))$ via the induced map $\pi_i(\Diff(Y)) \to \pi_i(\Homeo(Y))$ for any $i$.
For a 4-manifold $X$ bounded by $Y$, we denote by 
$r : \Homeo(X) \to \Homeo(Y)$, $r : \Diff(X) \to \Diff(Y)$,
the restriction maps.
Let us make the following definition:

\begin{defn}
\label{def loop ext}
Let $X$ be a 4-manifold bounded by a 3-manifold $Y$.
If (the homotopy class of) a loop $\gamma \in \pi_1(\Diff(Y)) (\cong \pi_1(\Homeo(Y)))$ satisfies
\begin{align*}
\gamma \in
\im(r_\ast : \pi_1(\Homeo(X)) \to \pi_1(\Homeo(Y))),
\end{align*}
we say that {\it the loop $\gamma$ extends to $X$ topologically}.
If we have
\[
\gamma \notin \im(r_\ast : \pi_1(\Diff(X)) \to \pi_1(\Diff(Y))),
\]
we say that {\it the loop $\gamma$ does not extend to $X$ smoothly}.
\end{defn}

When $Y$ is a Seifert fibered 3-manifold, let 
\[
\gamma_S \in \pi_1(\Diff(Y)) (\cong \pi_1(\Homeo(Y)))
\]
denote the loop of diffeomorphisms arising from the standard circle action on $Y$.
We call $\gamma_S$ the {\it Seifert loop}.
The Seifert loop is non-trivial in $\pi_1(\Diff(Y)) \cong \pi_1(\Homeo(Y))$ \cite[Proposition~8.8]{OrsonPowell2022}, and it extends topologically to any simply-connected 4-manifolds bound by $Y$ \cite{OrsonPowell2022}.
Nevertheless, we will see that the Seifert loop may not extend smoothly:

\begin{thm}
\label{thm: pi1 1}
Let $n \geq 0$ and let $X$ be a compact, oriented, positive-definite, simply-connected, smooth compact 4-manifold bounded by $Y=\Sigma(2,3,6n+7)$.
Then the Seifert loop $\gamma_S \in \pi_1(\Diff(Y))$ extends to $X$ topologically, but not smoothly.
\end{thm} 

Before \cref{thm: pi1 1}, a punctured homotopy $K3$ surface $M$ and $M \# S^2 \times S^2$ were the only known examples of a 4-manifold whose boundary admits a non-extendable loop of diffeomorphisms
\footnote{The results on a punctured homotopy $K3$ and its once stabilization are not written explicitly in the literature, but one can prove these by applying \cref{lem: exact} to the results by Kronheimer--Mrowka~\cite{KM20} and Lin~\cite{JL20} mentioned above.}.
\Cref{thm: pi1 1} gives the first example of such 4-manifolds with boundary $\neq S^3$.

As an immediate consequence of \cref{thm: pi1 1}, we show:

\begin{cor}
\label{thm: pi1 smooth vs. top 1}
Let $n \geq 0$ and $X$ be any compact, oriented, positive-definite, simply-connected, smooth compact 4-manifold bounded by $\Sigma(2,3,6n+7)$.
Then the natural map
\[
\pi_1(\Diff(X)) \to \pi_1(\Homeo(X))
\]
is not surjective.
\end{cor}

The previously known examples of a 4-manifold $X$ and $i>0$ with non-surjective $\pi_i(\Diff(X)) \to \pi_i(\Homeo(X))$ are only when $i=1$, for 4-manifolds : (i) $K3$, (ii) a punctured homotopy $K3$ surface $M$, and (iii) $M\#S^2\times S^2$.
This was proven by Baraglia and the first author \cite{BK21} for (i) and follows from Kronheimer--Mrowka~\cite{KM20} for (ii) and from Lin~\cite{JL20} for (iii) with the help of \cite{OrsonPowell2022}. 
\Cref{thm: pi1 smooth vs. top 1} considerably increases the list of such 4-manifolds $X$, and gives the first example of this phenomenon for 4-manifolds with boundary $\neq S^3$.

We will deduce from \cref{contractible} an analogous result for some contractible 4-manifolds:

\begin{thm}
\label{cor: pi1 non surj contractible}
Let $C$ be a contractible compact smooth 4-manifold bounded by either $\Sigma(2,3,13)$ or  $\Sigma(2,3,25)$.
Then we have:
\begin{itemize}
\item[(i)] The Seifert loop $\gamma_S \in \pi_1(\Diff(\del C))$ extends topologically to $C$ but not smoothly.
\item[(ii)] The natural map
\[
\pi_1(\Diff(C)) \to \pi_1(\Homeo(C))
\]
is not surjective. 
\end{itemize}
\end{thm}
\noindent
Note that there is a formal similarity between non-extendable loops of diffeomorphisms and corks. Recall that, for a cork, a diffeomorphism on the boundary extends topologically over a contractible manifold but not smoothly. Then the assertion (i) of \cref{cor: pi1 non surj contractible} may be viewed as detection of \textit{``$\pi_1$-version of a cork"}.  However, in contrast to our Brieskorn sphere example, it is still not known whether there exists a cork in the classical sense whose boundary is a Brieskorn sphere \cite[Question 3.2]{DHM20}.

%As in \crefcref{contractible}, we may take $X$ in \cref{thm: pi1 1,thm: pi1 smooth vs. top 1} to be a contractible 4-manifold $C$ bounded by $\Sigma(2,3,13)$.
%This is the first result that there is a contractible 4-manifold whose boundary admits a non-extendable loop of diffeomorphisms.

A theorem by Saeki \cite{Saeki06} implies that, after sufficiently many stabilizations, the Seifert loop for any Seifert fibered homology 3-sphere extends even smoothly to any simply-connected 4-manifold (\cref{thm Dehn twist stably trivial}).
The results in \cref{subsection intro One stabilization} shall imply that one stabilization is not enough for some Seifert fibered homology 3-sphere:

\begin{thm}
\label{thm pi 1 Milnor necleus}
Let $X$ be either the Milnor fiber $M(2,3,7)$ or $M(2,3,11)$.
Then we have:
\begin{itemize}
\item [(i)] The Seifert loop $\gamma_S \in \pi_1(\Diff(\del X))$ extends to $X$ topologically, but not smoothly, even to $X\# S^2 \times S^2$.
\item [(ii)] Both of the natural maps
\begin{align*}
\pi_1(\Diff(X)) &\to \pi_1(\Homeo(X)),\\
\pi_1(\Diff(X \# S^2 \times S^2)) &\to \pi_1(\Homeo(X \# S^2 \times S^2))
\end{align*}
are not surjective.
\end{itemize}
\end{thm}
\noindent
%In contrast to \cref{thm pi 1 Milnor necleus}, any Seifert fibered 3-manifold is known to bounds a complex surface to which the Seifert loop extends smoothly (\cref{rem: equiv plumbing}). So the extension problem of a Seifert loop of a given Seifert 3-manifold may or may not be solved even among complex surfaces.

For most of the results explained until here, the simply-connectedness of 4-manifolds was needed to use the result by Orson and Powell~\cite{OrsonPowell2022} in the topological category, but we may relax the condition $\pi_1=0$ to $b_1=0$ to get the results in the smooth category.
This generalization is somewhat similar to the generalization of the notion of cork called {\it strong cork}, introduced in \cite{LRS18} and studied extensively also in \cite{DHM20,KMT23first}.
For example, we can define the notion of {\it strongly non-extendable loop} of diffeomorphisms of a 3-manifold in a similar spirit (\cref{defn: storngly non-extendable loop}), and we prove that the Seifert loops for $\Sigma(2,3,13)$ and $\Sigma(2,3,25)$ are strongly non-extendable (\cref{strong pi1 cork}).
See \cref{main thm1 generelized: Dehn twist on definite,main thm1 generelized: Dehn twist on definite 2} for this generalization.

\subsection{More exotic diffeomorphisms: exotic commutators}
\label{More exotic diffeomorphisms: exotic commutators}

Using similar ideas as in detecting exotic Dehn twists,
we will also provide a different kind of exotic diffeomorphism.
This demonstrates that the technique we introduce is somewhat flexible and versatile.

To put this new example of an exotic diffeomorphism in a reasonable context, let us consider the following question discussed in \cite{IKMT22} by Mukherjee, Iida, and the first and third authors: For a given closed 3-manifold, is there a smooth compact 4-manifold $X$ bounded by $Y$ such that $X$ admits an exotic diffeomorphism? 
It was proven in \cite{IKMT22} that, if we replace $Y$ with a 3-manifold $Y'$ that is homology cobordant to $Y$, one may solve the above question in the positive for any $Y$ which has  rational homology of either $S^3$ or $S^1 \times S^2$.
However, this problem remains non-trivial if we do not make the above replacement.

Here, we make partial progress in the positive direction for many 3-manifolds obtained as surgeries on knots. To succinctly express our result, we make use of concordance invariant $V_0$ defined by Rasmussen \cite{rasmussen2003floer}, coming from Heegaard Floer homology. 

\begin{thm}
\label{thm: relative commutator}
Let $K$ be a knot in $S^3$ with $V_0(K)>0$.
Then, for each positive integer $n>0$,
$S^3_{1/2n}(K)$ bounds a $4$-manifold $X$ with $b_2(X)=4$ that admits a relatively exotic diffeomorphism $f$.
Similarly, for each $n\geq0$, $S^3_{1/(2n+1)}(K)$ bounds a $4$-manifold $X$ with $b_2(X)=5$ which admits a relative exotic diffeomorphism $f$.

Moreover, these exotic diffeomorphisms $f$ are written as commutators, i.e. $f=[f_1,f_2]$ for some $f_i \in \Diff(X,\del)$.
\end{thm}

\cref{thm: relative commutator} implies that, for example, for the mirror image of any non-slice quasi-positive knot or any quasi-alternating knot with positive signature, the $1/2n$ surgery bounds a smooth 4-manifold with $b_2=4$ which admits a relatively exotic diffeomorphism. 

Before commenting on the last statement of the \cref{thm: relative commutator} on commutators,
let us state an analogous result for closed 4-manifolds: 
%In \cite{IKMT22}, the authors used a homomorphism $FKM : \pi_0(\Diff_H(X,\del)) \to \Z$ to detect exotic diffeomorphisms, where $\Diff_H(X,\del)$ denotes the group of relative diffeomorphisms that act trivially on homology.
%In particular, the homomorphism $FKM$ descends to the abelianization $\mathrm{Ab}(\pi_0(\Diff_H(X,\del))) \to \Z$,  
\begin{thm}
\label{thm: closed commutator}
Set $X=2\CP^2 \# n(-\CP^2)$ for $n \geq 11$.
Then $X$ admits an exotic diffeomorphism $f$ written as a commutator, i.e. $f=[f_1,f_2]$ for some $f_i \in \Diff(X)$.
\end{thm}

The significance of the existence of {\it exotic commutator diffeomorphism} can be explained by recalling known techniques to detect exotic diffeomorphisms.
Most known exotic diffeomorphisms, obtained in such as \cite{Ru98,Ru99, BK18,IKMT22,KL22}, have been detected by using homomorphisms $G \to A$, from subgroups $G$ of the mapping class group $\pi_0(\Diff(X))$ to some abelian groups $A$, such as $\Z$ or $\Z/2$.
Such homomorphisms descend to the abelianization $\mathrm{Ab}(G) \to A$, which cannot detect the non-triviality of commutators of $G$.
For example, a homomorphism $\pi_0(\Diff(X)) \to \Z/2$ was defined in \cite{KL22}, and using this homomorphism, the authors detected an exotic diffeomorphism $f : X \to X$ that cannot lie in $[\Diff(X), \Diff(X)]$ for some 4-manifold $X$ \cite[Theorem~1.5]{KL22}.
Exotic commutator diffeomorphisms are complementary to such an example.

For both Dehn twists and commutator diffeomorphisms,
all exotic diffeomorphisms detected in this paper are smoothly pseudo-isotopic to the identity, relative to the boundary if $\del X \neq \emptyset$.
See \cref{rem pseudo isotopy}.

\subsection{Scheme of the proof}
\label{Scheme of the proof}

The proofs of the results of this article are based on a strategy that is somewhat different from known arguments to detect exotic diffeomorphisms:  we shall use constraints on families of smooth 4-manifold over $\RP^2$ or $T^2$ from Seiberg--Witten theory.
As mentioned before, all known arguments \cite{Ru98,BK18,KM20,JL20,IKMT22} to detect exotic diffeomorphisms are based on gauge-theoretic invariants of 1-parameter families of 4-manifolds, and this is a reason why it was difficult to detect exotic diffeomorphisms of contractible 4-manifolds and exotic Dehn twists.
Although there are several studies of families gauge theory over higher-dimensional base spaces, those have not been successfully used to detect exotic diffeomorphisms.

We clarify the scheme underlying our proof below.
A key idea is to consider the algebraic structure of the mapping class group.
Let $G = \left<a_1, \ldots, a_n \mid r \right>$ be a finitely presented group with a single relation $r$.
Let $X$ be an oriented smooth 4-manifold, and assume that $X$ is closed for simplicity.
Let $\Diff^+(X)$ denote the orientation-preserving diffeomorphism group, and $\Aut(H_2(X;\Z))$ denote the automorphism group of the intersection lattice.
Given a group homomorphism $\varphi : G \to \Aut(H_2(X;\Z))$,
we may ask whether $\varphi$ lifts to a homomorphism $\tilde{\varphi} : G \to \pi_0(\Diff^+(X))$ along the natural map $\mu : \pi_0(\Diff^+(X)) \to \Aut(H_2(X;\Z))$:
\begin{align*}
\xymatrix{
     & \pi_0(\Diff^+(X)) \ar[d]^{\mu}\\
    G \ar@{-->}[ru]^-{\tilde{\varphi}} \ar[r]_-{\varphi}  &  \Aut(H_2(X;\Z)).
    }
\end{align*}

For certain $G$, $X$, and $\varphi$, we will show that there is \textit{no} such lift $\tilde{\varphi}$ using families Seiberg--Witten theory developed by Baraglia~\cite{Ba21} and the first and third authors~\cite{KT22} applied to a family of $X$ over the 2-skeleton $(BG)^{(2)}$ of the classifying space, in other words, the presentation complex of $G$.
Let $r = r(a_1, \ldots, a_n)$ be the expression of $r$ as a word of $a_1, \ldots, a_n$.
If the image of $\varphi$ is contained in the image of $\mu$, we may find diffeomorphisms $f_1, \ldots, f_n \in \Diff^+(X)$ with $\mu([f_i]) = \varphi(a_i)$.
Set $F = r(f_1, \ldots, f_n) \in \Diff^+(X)$.
Then the non-liftability of $\varphi$ implies that $[F] \neq 1$ in $\pi_0(\Diff^+(X))$, while $F_\ast=1$ in $\Aut(H_2(X;\Z))$.
Since $\Aut(H_2(X;\Z))$ is isomorphic to $\pi_0(\Homeo(X))$ as far as $X$ is simply-connected \cite{Q86,P86}, it follows that $F$ is an exotic diffeomorphism.

We shall carry out the above idea for $G=\Z/2 = \left<a \mid a^2\right>$ and $G=\Z^2 = \left<a_1,a_1 \mid [a_1,a_2]\right>$.
For example, for $G=\Z/2=\left<a \mid a^2\right>$, we define $\varphi(a)$ as the action of a `square root' of a Dehn twist on the intersection form.
Then we solve the above lifting problem in the negative using constraints from Seiberg--Witten theory for families over $(B(\Z/2))^{(2)}=\RP^2$ and $(B\Z^2)^{(2)}=T^2$, which yield exotic Dehn twists and exotic commutator diffeomorphisms respectively.
%corresponding to $a^2$ and $[a_1,a_2]$, which give the results on exoic Dehn twists and on exotioc commutator diffeomorphisms respectively.

\subsection{Structure of the paper}

In \cref{Input from family Seiberg--Witten theory}, we collect necessary results from families Seiberg--Witten theory.
In \cref{section Families over RP2}, we give constraints on diffeomorphisms using results in \cref{Input from family Seiberg--Witten theory} applied to families over $\RP^2$, which are crucial to the results on exotic Dehn twists in this paper.
Similarly, in \cref{section Families over T2}, we consider families over $T^2$ which shall be used to get the results on commutator exotic diffeomorphism.
In \cref{section Proof of reluts on Dehn twists}, we give the proofs of the results on Dehn twists by combining the result in \cref{section Families over RP2} with constructions of equivariant cobordisms.
In \cref{section Loops of diffeomorphisms}, we translate the results in \cref{section Proof of reluts on Dehn twists} in terms of loops of diffeomorphisms to get the results in \cref{Non-extendable loops of diffeomorphisms}.
In \cref{section Proof of reluts on commutators}, we give the proofs of the results on exotic commutators (\cref{More exotic diffeomorphisms: exotic commutators}).

\begin{acknowledgement}
We would like to thank Patrick Orson and Mark Powell for attracting our attention to the problem discussed in the current paper. Also, we wish to thank Mark Powell for pointing out that exotica of Dehn twists can be rephrased in terms of non-extendable loops of diffeomorphisms as in \cref{Non-extendable loops of diffeomorphisms,section Loops of diffeomorphisms}.
We learned a lot also from Kronheimer's talk \cite{Krontalk} at Gauge theory virtual, so we are grateful to Kronheimer, as well as the organizers of Gauge theory virtual.
We would also like to thank Danny Ruberman, Ian Montague, Charles R. Stine, Tadayuki Watanabe, Kristen Hendricks and Sally Collins for  helpful conversations, and thank David Baraglia for comments on a draft of the paper. The authors thank the anonymous referee for providing such a detailed and insightful report which helped to improve this article.
This project began in the program entitled `Floer homotopy theory' held at  MSRI/SL-Math on Aug-Dec in 2022, supported by the NSF under Grant No. DMS-1928930. The authors thank the organizers of the program for inviting them. In addition, HK was partially supported by JSPS KAKENHI Grant Numbers 17H06461, 19K23412, and 21K13785, AM was partially supported by NSF DMS-2019396. MT was partially supported by JSPS KAKENHI Grant Number 20K22319, 22K13921, and RIKEN iTHEMS Program.
\end{acknowledgement}

\section{Input from families Seiberg--Witten theory}
\label{Input from family Seiberg--Witten theory}

We summarize results in families Seiberg--Witten theory which is crucial in the proof of the results in this paper.
First, we recall some basics of families of 4-manifolds.
See \cite[Subsection~2.3]{KT22} for more details.

Let $X$ be an oriented smooth 4-manifold.
Recall that the space of maximal-dimensional positive-definite subspaces of $H^2(X;\R)$ is contractible.
We often write just $H^+(X)$ for a choice of such subspace.
Let $\Diff^+(X)$ denote the orientation-preserving diffeomorphism group.
Let $X \to E \to B$ be a smooth fiber bundle with fiber $X$ over a CW complex $B$.
Then we have an associated vector bundle $\R^{b^+(X)} \to H^+(E) \to B$ of rank $b^+(X)$.
This vector bundle is not determined by $E$, but the isomorphism class of $H^+(E)$ is determined only by $E$ because the space of choices of $H^+(X)$ is contractible.
Intuitively, $H^+(E)$ is obtained by collecting $H^+(E_b)$ over the fibers $E_b$ of $E$.

Let $\fraks$ be a spin$^c$ structure on $X$, and
let $\Aut(X,\fraks)$ denote the automorphism group of the spin$^c$ 4-manifold $(X,\fraks)$, defined without using Riemannian metric.
Then the kernel of the natural forgetful map $\Aut(X,\fraks) \to \Diff(X)$ is given by the gauge group $\Map(X,U(1))$.

Similarly, if $X$ has non-empty boundary, a group $\Aut((X,\fraks),\del)$ is defined to be the structure group of families of spin$^c$ 4-manifolds with fiber $X$ whose restriction to the boundary are trivialized as a family of spin$^c$ 3-manifolds.

Here we note a sufficient condition to reduce the structure group of a smooth fiber bundle to the automorphism group of a spin$^c$ structure.
Recall that, in general, given a smooth fiber bundle $X \to E \to B$ with structure group $\Diff^+(X)$, it gives rise to the monodromy $\pi_1(B) \to \pi_0(\Diff^+(X))$, and similarly for $\Diff(X,\del)$.

\begin{lem}[{\it cf. }{\cite[Proposition~2.1]{B19}}]
\label{lem: Fiberwise spinc}
Let $X$ be a compact oriented smooth 4-manifold with $b_1(X)=0$.
Let $\fraks$ be a spin$^c$ structure on $X$.
Let $X \to E \to B$ be a smooth fiber bundle over a CW complex $B$ with $H^3(B;\Z)=0$.
Suppose that $\fraks$ is preserved under the monodromy action of $E$.
Furthermore, we suppose the following:
\begin{itemize}
\item If $X$ has empty boundary, then we suppose that the structure group of $E$ reduces to $\Diff^+(X)$.
\item If $X$ has non-empty boundary, we suppose that $\del X=Y$ is an integral homology 3-sphere and the structure group of $E$ reduces to $\Diff(X,\del)$.
\end{itemize}
Then the structure group of $E$ reduces to $\Aut(X,\fraks)$ or $\Aut((X,\fraks),\del)$, according to $\del X = \emptyset$ or $\del X \neq \emptyset$.
\end{lem}

\begin{proof}
The case for $\del X = \emptyset$ has been proven by Baraglia~\cite[Proposition~2.1]{B19}.
We can extend his result to the case for $\del X \neq \emptyset$ as follows.

First, just by repeating the proof of \cite[Proposition~2.1]{B19}, one can see that the assumption $H^3(B;\Z)=0$ implies that the structure group of $E$ reduces to $\Aut(X,\fraks)$ even when $\del X \neq \emptyset$.
Namely, we have a spin$^c$ structure $\fraks_E$ on the vertical tangent bundle $T_v E \to E$ that restricts to $\fraks$ on the fibers.
We may regard $\fraks_E$ as a family of spin$^c$ structures with typical fiber $\fraks$, but $\fraks_E$ is not necessarily a trivial family of spin$^c$ structures on the family of the boundaries $E_{\del} = Y \times B$.
We will modify $\fraks_E$ so that the restriction to $E_{\del}$ will be the trivial family of spin$^c$ structures.
%To reduce to $\Aut((X,\fraks),\del)$ further, it suffices to prove that $\fraks_E$ can be trivialized over $E_{\del}$ under our assumptions.

Let $\Spin^c(T_vE)$ denote the set of spin$^c$ structures of the vector bundle $T_v E$.
Recall that $\Spin^c(T_vE)$  is acted by $H^2(E;\Z)$ freely and transitively.
We identify $H^2(E;\Z)$ with the set of isomorphism classes of complex line bundles over $E$, and we denote the action of $\mathcal{L} \in H^2(E;\Z)$ by $-\otimes \mathcal{L}$.
Let $\frakt$ denote the spin$^c$ structure on $T_v E_\del$ defined as the pull-back of $\fraks|_Y$ under the projection 
\[
T_v E_\del = TY \times B \to TY.
\]
Then there is a line bundle $L \in H^2(E_\del;\Z)$ such that $\frakt = \fraks_E|_{E_\del} \otimes L$.
Since we supposed that $Y$ is an integral homology 3-sphere, the projection $E_\del \to B$ induces an isomorphism $H^2(E_\del;\Z) \cong H^2(B;\Z)$.
Let $L_0 \in H^2(B;\Z)$ denote the cohomology class corresponding to $L$ via this isomorphism.
%Since both of $\frakt$ and $\fraks_E|_{E_\del}$ restrict to a common spin$^c$ structure, $\fraks|_Y$, on the fibers of $T_v E_\del \to B$, we have that $L \to E_{\del}$ restricts to the trivial line bundle on each fiber $Y \times \{b\}$ for $b \in B$. This implies that there is a line bundle $L_0 \in H^2(B;\Z)$ such that $L=\pi_\del^\ast L_0$, where $\pi_\del : E_\del \to B$ denote the projection.

Let $\pi : E \to B$ denote the projection.
Define $\tilde{L} \in H^2(E;\Z)$ by $\tilde{L} = \pi^\ast L_0$.
By construction, we have $\tilde{L}|_{E_\del} = L$.
Furthermore, $\tilde{L}$ restricts to the trivial line bundle on each fiber $E_b$ of $E \to B$ for $b \in B$.
Hence, if we put $\fraks_E' = \fraks_E \otimes \tilde{L}$, this new spin$^c$ structure $\fraks_E'$ on $T_v E$ restricts to $\fraks$ on each fiber of $E \to B$, and $\fraks_E'$ restricts to $\frakt$ on $E_\del$.
Therefore $\fraks_E'$ gives a lift of the structure group of $E$ from $\Diff(X,\del)$ to $\Aut((X,\fraks),\del)$.
This completes the proof.
%To do this, consider the exact sequence
%\[
%1 \to \mathcal{G}_Y \to \Aut(Y,\fraks|_Y) \to \Diff^+(Y),
%\]
%where $\mathcal{G}_Y=\Map(Y,U(1))$ is the gauge group. The group $\Aut(Y,\fraks|_Y)$ is the structure group of the family $\fraks_E|_{E_\del}$ of spin$^c$ structures, but $\fraks_E|_{E_\del}$ is a fiberwise spin$^c$ structure on the trivial bundle $Y \times B$, we may reduce the structure of $\fraks_E|_{E_\del}$ to $\mathcal{G}_Y$. Hence the obstructions to trivialize $\fraks_E|_{E_\del}$ lie in cohomologies $H^{i+1}(B;\tilde{\pi}_i(\mathcal{G}_Y))$, where $\tilde{\pi}_i(\mathcal{G}_Y)$ is a local system over $B$ with fiber $\pi_i(\mathcal{G}_Y)$.
%Since we supposed that $Y$ is a rational homology 3-sphere, $\pi_1(\mathcal{G}_Y) \cong \Z$  and $\pi_i(\mathcal{G}_Y)=0$ for all $i \neq 1$. Hence the only obstruction lies in $H^{2}(B;\tilde{\Z})$, where $\tilde{\Z}$ is a local system over $B$ with fiber $\Z$.
\end{proof}

\begin{rem}
In the proof of \cref{lem: Fiberwise spinc}, the assumption that $Y$ is an integral homology 3-sphere was used only to ensure that there is a line bundle $L_0 \in H^2(B;\Z)$ such that $L = \pi_\del^\ast L_0$, where $\pi_\del : E_\del \to B$ is the projection.
So we can drop the homology sphere assumption as far as we know the existence of such $L_0$.
This may happen often, as $L \to E_{\del}$ restricts to the trivial line bundle on each fiber $Y \times \{b\}$ for $b \in B$, since both of $\frakt$ and $\fraks_E|_{E_\del}$ restrict to a common spin$^c$ structure, $\fraks|_Y$, on the fibers of $T_v E_\del \to B$.
\end{rem}

The first key inputs in the proofs of the results in this paper are the following family version of Donaldson's diagonalization due to Baraglia~\cite{Ba21} and family version of Fr{\o}yshov's inequality by the first and third authors \cite{KT22}:

\begin{thm}[Baraglia~{\cite[Theorem~1.1]{Ba21}}]
\label{thm: Baraglia}
Let $X$ be a closed oriented smooth 4-manifold.
Let $\fraks$ be a spin$^c$ structure on $X$.
Let $X \to E \to B$ be a fiber bundle with structure group $\Aut(X,\fraks)$ over  a compact base space $B$.
Suppose that $w_{b^+(X)}(H^+(E)) \neq 0$.
Then we have 
\[
c_1(\fraks)^2 - \sigma(X)
\leq 0.
\]
\end{thm}

\begin{thm}[{\cite[Theorem~1.1]{KT22}}]
\label{thm: KT}
Let $X$ be a compact oriented smooth 4-manifold with $b_1(X)=0$ and with rational homology 3-sphere boundary $Y$.
Let $\fraks$ be a spin$^c$ structure on $X$.
Let $X \to E \to B$ be a fiber bundle with structure group $\Aut((X,\fraks),\del)$ over  a compact base space $B$.
Suppose that $w_{b^+(X)}(H^+(E)) \neq 0$.
Then we have 
\[
\frac{c_1(\fraks)^2 - \sigma(X)}{8}
\leq \delta(Y,\fraks|_{Y}).
\]
\end{thm}

Here $\delta(-)$ denotes the (Seiberg--Witten) Fr{\o}yshov invariant.
We follow the notation and convention of \cite{Ma16}.

\begin{rem}
As noted in \cite[Section~7]{Ba21}, one does not have to assume that $b_1(X)=0$ in \cref{thm: Baraglia}.
\end{rem}

\begin{rem}
In the proof of \cref{main thm1: Dehn twist on definite}, it is enough to use \cref{thm: Baraglia} rather than \cref{thm: KT}, while we need \cref{thm: KT} to prove other results in this paper.
\end{rem}

Next, we will recall an analogous result for spin structures instead of spin$^c$ structures, which shall be crucial to proving the results on the stabilization by $S^2 \times S^2$ (\cref{subsection intro One stabilization}).
Let $\Aut(X,\fraks)$ denote the automorphism group of a spin 4-manifold $(X,\fraks)$, just as in the spin$^c$ case by abuse of notation.
The kernel of the forgetful map $\Aut(X,\fraks) \to \Diff(X)$ is $\Z/2$.
If $X$ has non-empty boundary, a group $\Aut((X,\fraks),\del)$ is defined to be the structure group of families of spin 4-manifolds with fiber $X$ whose restriction to the boundary are trivialized as a family of spin 3-manifolds.
(See \cite[Subsection~2.3]{KT22} for details.)
The following is a families and relative version of Donaldson's Theorems B, C.
This is a relative version of a result by Baraglia~\cite[Theorem~1.2]{Ba21} for closed spin 4-manifolds.
However, we will see that families of {\it closed} spin 4-manifolds do not work for our purpose (\cref{rem: why not closed for spin}), so let us state only the relative version:

\begin{thm}[{\cite[Theorem~1.2]{KT22}}]
\label{thm: KT spin}
Let $X$ be a compact oriented smooth 4-manifold with $b_1(X)=0$ and with rational homology 3-sphere boundary $Y$.
Let $\fraks$ be a spin structure on $X$.
Let $X \to E \to B$ be a fiber bundle with structure group $\Aut((X,\fraks),\del)$ over  a compact base space $B$.
Then:
\begin{enumerate}
\item [(i)] If $w_{b^+(X)}(H^+(E)) \neq 0$, then we have 
\[
-\frac{\sigma(X)}{8}
\leq \gamma(Y,\fraks|_{Y}).
\]
\item [(ii)] If $w_{b^+(X)-1}(H^+(E)) \neq 0$, then we have 
\[
-\frac{\sigma(X)}{8}
\leq \beta(Y,\fraks|_{Y}).
\]
\item [(iii)] If $w_{b^+(X)-2}(H^+(E)) \neq 0$, then we have 
\[
-\frac{\sigma(X)}{8}
\leq \alpha(Y,\fraks|_{Y}).
\]
\end{enumerate}
\end{thm}

Here $\alpha, \beta, \gamma$ are Manolescu's invariants defined in \cite{Ma16}.

\section{Families over $\RP^2$}
\label{section Families over RP2}

A key idea to get results on Dehn twists is to consider families over $\RP^2$.
We will consider both families of spin$^c$ 4-manifolds and families of spin 4-manifolds in \cref{subsection Families of spinc 4-manifolds} and \cref{subsection Families of spin 4-manifolds} respectively.

\subsection{Families of spin$^c$ 4-manifolds}
\label{subsection Families of spinc 4-manifolds}

Given an oriented compact 4-manifold $X$, as mentioned above, there are many choices of maximal-dimensional positive definite subspaces $H^+(X) \subset H^2(X;\R)$.
Given a subgroup $G$ of the automorphism group $\Aut(H_2(X;\Z))$ of the intersection form, if there is a choice $H^+(X) \subset H^2(X;\R)$ which is preserved setwise under the $G$-action, we just say that $G$ acts on $H^+(X)$.
First, we shall prove:

\begin{thm}
\label{thm: rel RP2 spinc pres}
Let $X$ be a compact, oriented, smooth 4-manifold with boundary and with $b^+(X)=2$ and $b_1(X)=0$.
Set $Y = \del X$ and suppose that $Y$ is an integral homology 3-sphere.
Let $\fraks$ be a spin$^c$ structure on $X$.
Suppose that there is a diffeomorphism $f \in \Diff(X,\del)$ such that $f^2$ is isotopic to the identity through $\Diff(X,\del)$, and $f^\ast$ acts on $H^+(X)$ as via multiplication by $-1$, with $f^\ast \fraks \cong \fraks$.
Then  we have
\begin{align}
\label{eq: Fro ineq}
\frac{c_1(\fraks)^2-\sigma(X)}{8} \leq \delta(Y,\fraks|_Y).
\end{align}
\end{thm}

The proof of \cref{thm: rel RP2 spinc pres} is based on the following elementary observation:

\begin{lem}
\label{lem: RP2 family constrcution}
Let $X$ be a smooth manifold with boundary.
Let $f \in \Diff(X,\del)$ be a diffeomorphism such that $f^2$ is isotopic to the identity through $\Diff(X,\del)$.
Then there exists a fiber bundle $X \to E \to \RP^2$ with structure group $\Diff(X,\del)$ whose monodromy is given by $[f] \in \pi_0(\Diff(X,\del))$.
\end{lem}

\begin{proof}
%When $f$ satisfies that $f^2=\id$ not just $f^2$ is isotopic to $\id$, the Borel construction $X \times_{\Z/2} S^2 \to \RP^2$ gives the desired fiber bundle $E$. We modify this construction as follows.
Let us consider the standard cellular decomposition
\[
\RP^2 = e^0 \cup e^1 \cup e^2
\]
of $\RP^2$, where $e^i$ are $i$-dimensional cells.
Let $\varphi : S^1 \to e^0 \cup e^1 =\RP^1$ be the attaching map for $e^2$, which is given as the double covering map $S^1 \to S^1$.
In general, for a diffeomorphism $F$ of $X$, let $X \to X_F \to S^1$ denote the mapping torus of $F$.
Define a fiber bundle $X \to E^{(1)} \to \RP^1=S^1$ by $E^{(1)}=X_f$.
We extend $E^{(1)}$ to the whole $\RP^2$ as follows.

First, the pull-back of the monodromy of $E^{(1)}$ under $\varphi$ is $f^2$, which is, by our assumption, isotopic to the identity through an isotopy $\{F_t\}_{t \in [0,1]}$ in  $\Diff(X,\del)$.
We can form a 1-parameter family of mapping torus $X_{F_t} \to S^1$, and regard it as a fiber bundle $X \to E' \to [0,1] \times S^1$.
Then $E'|_{\{0\} \times S^1}$ is $\varphi^\ast E^{(1)}$, and $E'|_{\{1\} \times S^1}$ is the trivial bundle.
Glue $E'$ with the trivial bundle $D^2 \times X \to D^2$ along $\{1\} \times S^1$ and get a family over $([0,1] \times S^1) \cup D^2 \cong D^2$, which is regarded as the domain of the characteristic map of $e^2$.
Thus we can extend $E^{(1)}$ to the whole $\RP^2$.
This extension $X \to E \to \RP^2$ gives the desired fiber bundle.
\end{proof}

\begin{proof}[Proof of \cref{thm: rel RP2 spinc pres}]
Let $X \to E \to \RP^2$ be the fiber bundle given by \cref{lem: RP2 family constrcution}.
By \cref{lem: Fiberwise spinc}, the structure group of $E$ reduces to $\Aut((X,\fraks),\del)$.

Let $\gamma_{\R} \to \RP^2$ denote the tautological real line bundle.
We claim that the plane bundle 
$H^+(E) \to \RP^2$ is given by $\gamma_{\R} \oplus \gamma_{\R} \to \RP^2$.
To see this, let $\tilde{E} \to S^2$ denote the pull-back of $E \to \RP^2$ under the natural surjection $S^2 \to \RP^2$.
Since the family $H^2(X;\R) \to H(\tilde{E}) \to S^2$ associated with $\tilde{E}$ is a flat bundle and $\pi_1(S^2)=1$, we have a canonical trivialization $H(\tilde{E}) \cong H^2(X;\R) \times S^2$.
Under this trivialization, the family $H^2(X;\R) \to H(E) \to \RP^2$ associated with $E$ is given by $H(E) \cong H^2(X;\R) \times_{\Z/2} S^2$.
Here $\Z/2$ acts on $S^2$ as the antipodal map, and acts on $H^2(X;\R)$ via $f^\ast$.
Using the same trivialization, the $f^\ast$-invariant subspace $H^+(X)$ gives rise to the family $H^+(E) \cong H^+(X)\times_{\Z/2}S^2 \to \RP^2$.
Since we assumed that $f$ acts on $H^+(X)$ via multiplication by $-1$, we have that $H^+(E)\cong \gamma_{\R} \oplus \gamma_{\R} \to \RP^2$.

Thus we have 
\[
w_2(H^+(E))
= w_2(\gamma_{\R} \oplus \gamma_{\R})
= w_1(\gamma_{\R})^2 \neq 0.
\]
Hence the desired inequality \eqref{eq: Fro ineq} follows from \cref{thm: KT}.
\end{proof}

\begin{rem}
In the proof of \cref{lem: RP2 family constrcution}, the fiber bundle $E$ is not necessarily constructed from $f$ in a unique way, since it involves the  choice of isotopy $F_t$.
A different choice $F_t'$ of isotopy together with $F_t$ forms a loop in $\Diff(X,\del)$.
If this loop is non-trivial in $\pi_1(\Diff(X,\del))$, the family over $\RP^2$ constructed from $F_t'$ may not be isomorphic to $E$.
\end{rem}

As expected, a statement analogous to \cref{thm: rel RP2 spinc pres} holds for closed 4-manifolds:

\begin{thm}
\label{thm: RP2 spinc pres}
Let $X$ be a closed, oriented, smooth 4-manifold with $b^+(X)=2$ and $b_1(X)=0$.
Let $\fraks$ be a spin$^c$ structure on $X$.
Suppose that there is a diffeomorphism $f \in \Diff^+(X)$ such that $f^2$ is isotopic to the identity, and $f^\ast$ acts on $H^+(X)$ via multiplication by $-1$, and $f^\ast \fraks \cong \fraks$.
Then  we have
\begin{align}
\label{eq: Fro closed}
c_1(\fraks)^2-\sigma(X) \leq 0.
\end{align}
\end{thm}

\begin{proof}
We can repeat the proof of  \cref{thm: rel RP2 spinc pres} using \cref{thm: Baraglia} in place of \cref{thm: KT}.
\end{proof}

\subsection{Families of spin 4-manifolds}
\label{subsection Families of spin 4-manifolds}

Next, we will consider families of spin 4-manifolds over $\RP^2$.
Let us define the following notation.
Let $f : X \to X$ be a diffeomorphism of a 4-manifold $X$ such that $f^2$ acts on homology trivially.
Then $f$ gives rise to an involution on $H^2(X;\R)$.
Suppose that we may choose $H^+(X) \subset H^2(X;\R)$ so that $H^+(X)$ is preserved setwise under the involution $f^\ast$.
Let $H^+(X)^f$ denote the $f^\ast$-invariant part of $H^+(X)$.

\begin{thm}
\label{thm: rel RP2 spin pres}
Let $X$ be a compact, oriented, smooth 4-manifold with boundary and with $2 \leq b^+(X) \leq 4$ and $b_1(X)=0$.
Set $Y = \del X$ and suppose that $Y$ is a rational homology 3-sphere.
Let $\fraks$ be a spin structure on $X$.
Suppose that there is a diffeomorphism $f \in \Diff(X,\del)$ such that $f^2$ is isotopic to the identity through $\Diff(X,\del)$ and $f^\ast \fraks \cong \fraks$.
Suppose also that
\[
2 \leq b^+(X) - \dim H^+(X)^f \leq 3.
\]
Then:
\begin{enumerate}
\item [(i)] If $b^+(X)=2$, then we have
\begin{align*}
-\frac{\sigma(X)}{8} \leq \gamma(Y,\fraks|_Y).
\end{align*}
\item [(ii)] If $b^+(X)=3$, then we have
\begin{align*}
-\frac{\sigma(X)}{8} \leq \beta(Y,\fraks|_Y).
\end{align*}
\item [(iii)] If $b^+(X)=4$, then we have
\begin{align*}
-\frac{\sigma(X)}{8} \leq \alpha(Y,\fraks|_Y).
\end{align*}
\end{enumerate}
\end{thm}

The proof of \cref{thm: rel RP2 spin pres} is based on the following variant of \cref{lem: RP2 family constrcution}:

\begin{lem}
\label{lem: RP2 family constrcution spin}
Let $X$ be a smooth manifold with boundary.
Let $f \in \Diff(X,\del)$ be a diffeomorphism such that $f^2$ is isotopic to the identity through $\Diff(X,\del)$.
Let $\fraks$ be a spin structure on $X$ and suppose that $f^\ast \fraks \cong \fraks$.
Then there exists a fiber bundle $X \to E \to \RP^2$ with structure group $\Aut((X,\fraks),\del)$ whose monodromy reduced to $\pi_0(\Diff(X,\del))$ is given by $[f] \in \pi_0(\Diff(X,\del))$.
\end{lem}

\begin{proof}
Set $Y=\del X$ and $\frakt = \fraks|_{\del X}$.
Let $\Diff^+(X,[\fraks])$ denote the group of orientation-preserving diffeomorphisms that preserve the isomorphism class $[\fraks]$ of the spin structure $\fraks$.
Then we have a covering fibration
\begin{align}
\label{eq: covering auto}
\Z/2 \to \Aut(X,\fraks) \xrightarrow{\pi} \Diff^+(X,[\fraks]), 
\end{align}
and similarly, we have 
\[
\Z/2 \to \Aut(Y,\frakt) \xrightarrow{\pi} \Diff^+(Y,[\frakt]).
\]

Let $\tilde{f} \in \Aut(X,\fraks)$ be a lift of $f$ along $\pi$; $\pi(\tilde{f})=f$.
Since $\tilde{f}|_{Y}$ covers $f|_{Y} = \id_{Y}$, we have $\tilde{f}|_{Y} \in \{\id_{(Y,\frakt)}, -\id_{(Y,\frakt)} \}$.
By replacing $\tilde{f}$ with $-\tilde{f}$ if necessary, we can assume that $\tilde{f}|_{Y} = \id_{(Y,\frakt)}$ so that $\tilde{f} \in \Aut((X,\fraks),\del)$.
By our assumption, there is an isotopy $\{F_t\}_{t \in [0,1]} \subset \Diff(X,\del)$ from $f^2$ to $\id_X$.
By the homotopy lifting property, there is a 1-parameter family $\{\tilde{F}_t\}_{t \in [0,1]} \subset \Aut(X,\fraks)$ such that $\tilde{F}_0=\tilde{f}^2$ and $\pi(\tilde{F}_t)=F_t$ for all $t$.

The property $\pi(\tilde{F}_t)=F_t$ for $t=1$ implies that $\tilde{F}_1 = \id_{(X,\fraks)}$ or $\tilde{F}_1 = -\id_{(X,\fraks)}$.
We claim that $\tilde{F}_1 = \id_{(X,\fraks)}$.
Indeed, since all $F_t$ lie in $\Diff(X,\del)$, we have $F_t|_{Y} \equiv \id_{Y}$, and it follows from this that $\tilde{F}_t|_{Y} \equiv \id_{(Y,\frakt)}$ or $\tilde{F}_t|_{Y} \equiv -\id_{(Y,\frakt)}$.
Since we know that $\tilde{F}_0 = \tilde{f}^2|_{Y} = \id_{(Y,\frakt)}$, we obtain $\tilde{F}_t|_{Y} \equiv \id_{(Y,\frakt)}$, in particular $\tilde{F}_1|_{Y} = \id_{(Y,\frakt)}$.
Combining this with that $\tilde{F}_1 = \id_{(X,\fraks)}$ or $\tilde{F}_1 = -\id_{(X,\fraks)}$, we can deduce that $\tilde{F}_1 = \id_{(X,\fraks)}$.

Therefore, we have an isotopy $\tilde{F}_t$ from $\tilde{f}^2$ to $\id_{(X,\fraks)}$.
By repeating the proof of 
\cref{lem: RP2 family constrcution} using $\tilde{f}$ and $\tilde{F}_t$ in place of $f$ and $F_t$, we obtain a family of spin 4-manifolds $(X,\fraks) \to E \to \RP^2$ whose monodromy is given by $[\tilde{f}] \in \pi_0(\Aut((X,\fraks),\del))$, which maps to $[f] \in \pi_0(\Diff(X,\del))$ under the forgetful map $\pi_0(\Aut((X,\fraks),\del)) \to \pi_0(\Diff(X,\del))$.
Since we have seen that $\tilde{F}_t|_{Y} \equiv \id_{(Y,\frakt)}$, the structure group of $E$ reduces to $\Aut((X,\fraks),\del)$.
This completes the proof of the \lcnamecref{lem: RP2 family constrcution spin}.
\end{proof}

\begin{proof}[Proof of \cref{thm: rel RP2 spin pres}]
Let $(X,\fraks) \to E \to \RP^2$
be the family of spin 4-manifolds with fiber $(X,\fraks)$ with structure group $\Aut((X,\fraks),\del)$ given by \cref{lem: RP2 family constrcution spin}.
By \cref{thm: KT spin}, it suffices to prove that $w_2(H^+(E)) \neq 0$.

First, let us consider the case that $b^+(X) - \dim H^+(X)^f=2$.
Let $\gamma_\R \to \RP^2$ denote the non-trivial real line bundle and let $\underline{\R} \to \RP^2$ denote the trivial real line bundle.
Then, as in the proof of \cref{thm: rel RP2 spinc pres}, we have $H^+(E)$ is isomorphic to
$\gamma_{\R}^{\oplus 2} \oplus \underline{\R}^{b^+(X)-2}$.
Hence the total Stiefel--Whitney class of $H^+(E)$ is:
\[
w(H^+(E))
= w(\gamma_{\R}^{\oplus 2})
= (1 + w_1(\gamma_{\R}))^2
= 1 + w_1(\gamma_{\R})^2.
\]
Thus we have $w_2(H^+(E)) = w_1(\gamma_{\R})^2 \neq 0$.

Next, let us consider the case that $b^+(X) - \dim H^+(X)^f = 3$.
In this case, $H^+(E)$ is isomorphic to $\gamma_{\R}^{\oplus 3} \oplus \underline{\R}^{b^+(X)-3}$.
Thus we have 
\[
w(H^+(E))
= w(\gamma_{\R}^{\oplus 3})
= (1 + w_1(\gamma_{\R}))^3
= 1 + w_1(\gamma_{\R}) + w_1(\gamma_{\R})^2 + w_1(\gamma_{\R})^3,
\]
and hence $w_2(H^+(E)) = w_1(\gamma_{\R})^2 \neq 0$.
This completes the proof.
\end{proof}

We make a few remarks on the cases which are not treated in \cref{thm: rel RP2 spin pres}:

\begin{rem}
\label{rem: why not closed for spin}
One cannot have an ``absolute" version of \cref{thm: rel RP2 spin pres} for closed spin 4-manifolds, obtained by replacing $\Diff(X,\del)$ and    $\Aut((X,\fraks), \del)$ with $\Diff(X)$ and $\Aut(X,\fraks)$.
Indeed, there is a smooth involution $\iota : K3 \to K3$ that satisfies $b^+(K3) - \dim H^+(K3)^\iota = 2$, but as $-\sigma(K3)>0$, $\iota$ gives a counterexample to the ``absolute" version of the statement (ii) of \cref{thm: rel RP2 spin pres}.
(For instance, one can find such $\iota$ as the free covering involution of $K3$ whose quotient yields the Enriques surface.)
The proof of \cref{thm: rel RP2 spin pres} fails for $\iota$ because there is no lift $\tilde{\iota}$ of $\iota$ to the spin structure which is of order 2; actually, every lift of such $\iota$ is of order 4 by a theorem due to Bryan \cite[Theorem~1.8]{Br98}.
This also gives a counterexample of a ``spin version" of \cref{lem: Fiberwise spinc}.
\end{rem}

\begin{rem}
One can easily see $w_2(\gamma_{\R}^{\oplus 4}) = 0$, so the proof of \cref{thm: rel RP2 spin pres} does not work for the case that $b^+(X) - \dim H^+(X)^f=4$. 
\end{rem}

\begin{rem}
For $b^+(X) - \dim H^+(X)^f=1$, a statement analogous to \cref{thm: rel RP2 spin pres} does not hold.
For example, let $N(2)$ denote the Gompf nucleus with $\del N(2) = -\Sigma(2,3,11)$.
Then it is easy to find an orientation-preserving diffeomorphism $f$ of $X=N(2) \# S^2 \times S^2$ such that $f|_{\del X} =\id_{\del X}$ and $b^+(X) - \dim H^+(X)^f=1$.
However we have $-\sigma(X)/8 = 0 > -2 = \gamma(\del X)$, so a statement analogous to (i) of \cref{thm: rel RP2 spin pres} is violated.
\end{rem}

\section{Families over $T^2$}
\label{section Families over T2}

In this \lcnamecref{section Families over T2}, we shall discuss an analogy of \cref{section Families over RP2} for $T^2$ instead of $\RP^2$.
Let $\diag(a_1, \ldots, a_n)$ denote the diagonal matrix with diagonal entries $a_1, \ldots, a_n$.
The main aim is to prove:

\begin{thm}
\label{thm: rel T2 spinc pres}
Let $X$ be a compact, oriented, smooth 4-manifold with boundary and with $b^+(X)=2$ and $b_1(X)=0$.
Set $Y = \del X$ and suppose that $Y$ is a rational homology 3-sphere.
Let $\fraks$ be a spin$^c$ structure on $X$.
Suppose that there are diffeomorphisms $f_1, f_2 \in \Diff(X,\del)$ such that $[f_1, f_2]$ is isotopic to the identity through $\Diff(X,\del)$, 
and $f_1^\ast, f_2^\ast$ act on $H^+(X)$ as $\diag(-1,1), \diag(1,-1)$ for some choice of basis of $H^+(X)$, and $f_i^\ast \fraks \cong \fraks$.
Then  we have
\[
\frac{c_1(\fraks)^2-\sigma(X)}{8} \leq \delta(Y,\fraks|_Y).
\]
\end{thm}

As in the proof of \cref{thm: rel RP2 spinc pres}, the proof of \cref{thm: rel T2 spinc pres} is based on the following elementary observation, which has appeared in \cite[Section~4]{BK21}, and we follow the argument there.

\begin{lem}
\label{lem: T2 family constrcution}
Let $X$ be a smooth manifold with boundary.
Let $f_1, f_2 \in \Diff(X,\del)$ be diffeomorphisms such that $[f_1,f_2]$ is isotopic to the identity through $\Diff(X,\del)$.
Then there exists a fiber bundle $X \to E \to T^2$ with structure group $\Diff(X,\del)$ whose monodromy is given by $\Z^2 \to \pi_0(\Diff(X,\del))\ ;\ (1,0) \mapsto [f_1],\ (0,1) \mapsto [f_2]$.
\end{lem}

\begin{proof}
Consider the standard cellular decomposition
\[
T^2 = e^0 \cup e_1^1 \cup e_2^1 \cup e^2.
\]
of $T^2$ with two 1-cells $e_i^1$ and a single 0-cell $e^0$ and single 2-cell $e^2$.

Regard $(T^2)^{(1)}$ as the wedge sum of two copies of $S^1$. 
Consider the mapping tori of $f_1$ and $f_2$, and glue them to form a fiber bundle $E^{(1)}$ over $(T^2)^{(1)}$ with structure group $\Diff(X,\del)$.

Our task in to extend $E^{(1)}$ to the whole $T^2$.
As in the proof of \cref{lem: RP2 family constrcution}, the monodromy along the boundary of $D^2$ corresponding to $e^2$ is given by $[f_1,f_2]$.
Regard $D^2$ as a union of a copy of $D^2$ and $[0,1] \times S^1$.
Consider the mapping torus of $[f_1,f_2]$ over $\{0\} \times S^1$.
As we assumed that $[f_1, f_2]$ is isotopic to the identity through $\Diff(X,\del)$,
using the parameter $[0,1]$, we may extend this mapping torus over $D^2 \cong ([0,1] \times S^1) \cup D^2$ by capping with the trivial bundle over $D^2$.
Thus we may extend $E^{(1)}$ to $T^2$ and the resulting fiber bundle $X \to E \to T^2$ with structure group $\Diff(X,\del)$ is the desired one.
\end{proof}

\begin{proof}[Proof of \cref{thm: rel T2 spinc pres}]
Let $X \to E \to T^2$ be the fiber bundle given by \cref{lem: T2 family constrcution}.
By \cref{lem: Fiberwise spinc}, the structure group of $E$ reduces to $\Aut((X,\fraks),\del)$.

Now we calculate $w_2(H^+(E))$.
For $i=1,2$, let $p_i : T^2 \to S^1$ denote the $i$-th projection.
Let $\gamma \to S^1$ denote the non-trivial real line bundle over $S^1$.
Let $\gamma_i \to T^2$ denote the pull-back $p_i^\ast \gamma$.
As in the proof of \cref{thm: rel RP2 spinc pres}, we have that 
\[
w_2(H^+(E))
= w_2(\gamma_1 \oplus \gamma_2)
= w_1(\gamma_1)w_1(\gamma_2) \neq 0.
\]
Hence \lcnamecref{thm: rel T2 spinc pres} follows from \cref{thm: KT}.
\end{proof}

Again we have a version for closed 4-manifolds:

\begin{thm}
\label{thm: T2 spinc pres closed}
Let $X$ be a closed, oriented, smooth 4-manifold with $b^+(X)=2$ and $b_1(X)=0$.
Let $\fraks$ be a spin$^c$ structure on $X$.
Suppose that there are diffeomorphisms $f_1, f_2 \in \Diff^+(X)$ such that the commutator $[f_1, f_2]$ is isotopic to the identity, and $f_1^\ast, f_2^\ast$ act on $H^+(X)$ as $\diag(-1,0), \diag(0,-1)$ for some choice of basis of $H^+(X)$, and $f_i^\ast \fraks \cong \fraks$.
Then  we have
\[
c_1(\fraks)^2-\sigma(X) \leq 0.
\]
\end{thm}

\begin{proof}
We can repeat the proof of  \cref{thm: rel T2 spinc pres} using \cref{thm: Baraglia} in place of \cref{thm: KT}.
\end{proof}

There is also a refinement of \cref{thm: rel T2 spinc pres} for the spin case.
We shall not use this refinement later, and give a sketch of the proof here:

\begin{thm}
\label{thm: spin over torus}
Let $X$ be a compact, oriented, smooth 4-manifold with boundary and with $2 \leq b^+(X) \leq 4$ and $b_1(X)=0$.
Set $Y = \del X$ and suppose that $Y$ is a rational homology 3-sphere.
Let $\fraks$ be a spin structure on $X$.
Suppose that there are diffeomorphisms $f_1, f_2 \in \Diff(X,\del)$ such that $[f_1, f_2]$ is isotopic to the identity through $\Diff(X,\del)$ and $f_i^\ast \fraks \cong \fraks$.
Then:
\begin{enumerate}
\item [(i)] If $b^+(X)=2$ and $f_1^\ast, f_2^\ast$ act on $H^+(X)$ as $\diag(-1,1), \diag(1,-1)$ for some choice of basis of $H^+(X)$, then we have
\begin{align*}
-\frac{\sigma(X)}{8} \leq \gamma(Y,\fraks|_Y).
\end{align*}
\item [(ii)] If $b^+(X)=3$ and $f_1^\ast, f_2^\ast$ act on $H^+(X)$ as $\diag(-1,-1,1), \diag(1,-1,-1)$ for some choice of basis of $H^+(X)$, then we have
\begin{align*}
-\frac{\sigma(X)}{8} \leq \beta(Y,\fraks|_Y).
\end{align*}
\item [(iii)] If $b^+(X)=4$, suppose that the action $(f_1^\ast, f_2^\ast)$ on $H^+(X)$ is one of the following for some choice of basis of $H^+(X)$: 
\begin{comment}
    \begin{align*}
&\left(\diag(-1,-1,-1,-1), \diag(1,-1,-1,-1)\right),\\ 
&\left(\diag(-1,-1,-1,-1), \diag(1,1,1,-1)\right),\\ 
&\left(\diag(-1,1,-1,-1), \diag(1,-1,-1,-1)\right),\\ 
&\left(\diag(-1,-1,1,-1), \diag(1,1,-1,-1)\right),\\
&\left(\diag(-1,-1,-1,1), \diag(1,1,1,-1)\right).
\end{align*}
\end{comment}
%Abhishek-I did this to save vertical space, feel free to revert to the previous one
\begin{align*}
&\left(\diag(-1,-1,-1,-1), \diag(1,-1,-1,-1)\right), \left(\diag(-1,-1,-1,-1), \diag(1,1,1,-1)\right),\\ 
&\left(\diag(-1,1,-1,-1), \diag(1,-1,-1,-1)\right),
\left(\diag(-1,-1,1,-1), \diag(1,1,-1,-1)\right),\\
&\left(\diag(-1,-1,-1,1), \diag(1,1,1,-1)\right).
\end{align*}

Then we have
\begin{align*}
-\frac{\sigma(X)}{8} \leq \alpha(Y,\fraks|_Y).
\end{align*}
\end{enumerate}    
\end{thm}

\begin{proof}
The proof is similar to that of \cref{thm: rel RP2 spin pres}.
Let $\tilde{f}_1, \tilde{f_2} \in \Aut(X,\fraks)$ be lifts of $f_i$ along the covering map $\pi$ in \eqref{eq: covering auto} with $\tilde{f}_i|_{(\del X, \fraks|_{\del X})} = \id_{(\del X, \fraks|_{\del X})}$.
As in the proof of \cref{lem: RP2 family constrcution spin}, one can deduce from $f_i|_{\del X} = \id_{\del X}$ that $[\tilde{f}_1, \tilde{f}_2]=\id_{(X,\fraks)}$.
Thus the pair $(\tilde{f}_1, \tilde{f}_2)$ gives rise to a family $E \to T^2$ of spin 4-manifolds with fiber $(X,\fraks)$ over $T^2$ by replacing $(f_1,f_2)$ with $(\tilde{f}_1, \tilde{f}_2)$ in the proof of \cref{lem: T2 family constrcution}.
It is straightforward to check $w_2(H^+(E))\neq0$ for all the cases in the statement of the \lcnamecref{thm: spin over torus}.
Thus the \lcnamecref{thm: spin over torus} follows from \cref{thm: KT spin}.
\end{proof}

\section{Proofs of the results on Dehn twists}
\label{section Proof of reluts on Dehn twists}

\subsection{Preliminary: Dehn twists and Milnor fibers}
\label{Preliminary: definition of Dehn twists and Milnor fibers}

We recall the Dehn twist emerging from the circle action on the Brieskorn spheres.
We will use notation on Dehn twists introduced in \cref{Intro: Exotic Dehn twists}.
Recall the standard expression of $\Sigma(p,q,r)$:
\[
\Sigma(p,q,r) = \Set{(z_1,z_2,z_3) \in \C^3 | z_1^p+z_2^q+z_3^r=0} \cap S(\C^3).
\]
Unless $(p,q,r) = (2,3,5)$, we have that
\begin{align}
\label{MCG Brieskorn}
\pi_0(\Diff^+(\Sigma(p,q,r))) \cong \Z/2,
\end{align}
generated by the involution $\tau : \Sigma(p,q,r) \to \Sigma(p,q,r)$ defined by $\tau(z_1,z_2,z_3) = (\bar{z}_1, \bar{z}_2, \bar{z}_3)$ (see, for example, \cite[Theorem 7.6]{DHM20}).
For $p=2$,
we denote by 
$\sigma : \Sigma(2,q,r) \to \Sigma(2,q,r)$
the involution defined by $\sigma(z_1,z_2,z_3) = (-z_1,z_2,z_3)$.
Then $\sigma$ is the covering involution associated with the construction of $\Sigma(2,q,r)$ as the double branched covering of $S^3$ along the positive torus knot $T_{q,r}$.
Note that any orientation-preserving involution of $\Sigma(2,q,r)$ is conjugate to $\sigma$ or $\tau$ (see, for example \cite[first Theorem in Section~5]{AnvariHambleton21}).

Set $Y=\Sigma(2,q,r)$.
We can define a smooth isotopy $T : Y \times [0,1] \to Y$ 
from $\id_Y$ to $\sigma$ by
\[
T((z_1,z_2,z_3),s) = e^{is\pi} \cdot (z_1,z_2,z_3),
\]
where $u \cdot (z_1,z_2,z_3)$ for $u \in S^1$ is defined by $(u^{qr}z_1, u^{2r}z_2, u^{2q}z_3)$.
Define 
\begin{align}
\label{eq: TY}
T_Y : Y \times [0,1] \to Y \times [0,1]
\end{align}
by $T_Y(y,s)=(T(y,s),s)$.
Then $T_Y|_{Y \times \{0\}} = \id_Y$, $T_Y|_{Y \times \{1\}} = \sigma$, 
and $(T_Y)^2 \in \Diff(Y \times [0,1],\del)$ is the Dehn twist $t_{Y} : Y \times [0,1] \to Y \times [0,1]$.
If $X$ is a 4-manifold bounded by $Y$, we may define a diffeomorphism 
\begin{align}
\label{eq: T_YX}
T_X : X \to X
\end{align}
with ${T_X}|_{\del X} = \sigma$ by fixing an identification between a collar neighborhood of $X$ and $Y \times [0,1]$, and by extending $T_Y$.
We will also use a diffeomorphism 
\begin{align}
\label{eq: -T}
-T_Y : Y \times [0,1] \to Y \times [0,1]     
\end{align}
defined by $(-T_Y)(y,s) = T_{Y}(y,1-s)$.
Then we have $(-T_Y)|_{Y \times \{0\}} = \sigma$, $(-T_Y)|_{Y \times \{1\}} = \id_Y$.

Now we will recall some basics of Milnor fibers, which is an ingredient of the proof of result on Dehn twists.
Let $M(p,q,r)$ denote the (compactified) Milnor fiber:
\[
M(p,q,r) = \Set{(z_1,z_2,z_3) \in \C^3 | z_1^p + z_2^q + z_3^r = \epsilon} \cap D(\C^3),
\]
where $\epsilon \neq 0$ is small enough.
Recall that $M(p,q,r)$ admits the following description as a branched covering.
(See, for example, \cite[Example~6.3.10]{GS99}.)
Let $S \subset D^4$ be a smoothly embedded surface bounded by the torus link $T_{q,r}$ obtained by pushing the canonical Seifert surface of $T_{q,r}$.
Then $M(p,q,r)$ is the $p$-th branched covering of $D^4$ along $S$.
Under identifying $\del M(p,q,r)$ with $\Sigma(p,q,r)$, this branched covering description is compatible with regarding $\Sigma(p,q,r)$ as the $p$-th branched covering of $S^3$ along $T_{q,r}$.

We will repeatedly use the following result by Orson and Powell in the topological category.
Note that, given a 3-manifold $Y$ (topologically) embedded in a 4-manifold $X$ with a loop $S^1 \to \Homeo(Y)$ based at the identity, we have the Dehn twist $t_X : X \to X$ along $Y$, which lies in $\Homeo(X,\del)$.

\begin{thm}[{Orson and Powell~ \cite[Corollary C]{OrsonPowell2022}}]
\label{thm: OrsonPowell}
Let $X$ be a simply-connected, oriented, compact, topological 4-manifold such that $\del X$ has the rational homology of $S^3$
or of $S^1 \times S^2$.
If a diffeomorphism $f \in \Homeo(X,\del)$ acts trivially on $H_2(X;\Z)$, then $f$ is topologically isotopic to the identity through $\Homeo(X,\del)$.

In particular, given an integral homology 3-sphere $Y$ embedded in $X$ with a loop $S^1 \to \Homeo(Y)$ based at the identity, the Dehn twist $t_X : X \to X$ along $Y$ is topologically isotopic to the identity through $\Homeo(X,\del)$.
\end{thm}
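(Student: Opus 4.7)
My plan is to first reduce the Dehn twist statement to the main claim, and then tackle the main claim by capping off the boundary to reduce to a closed-manifold result of Quinn--Perron.

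For the reduction of the ``in particular'' part, I would verify that the Dehn twist $t_X$ along an integral homology $3$--sphere $Y \hookrightarrow X$ acts trivially on $H_2(X;\Z)$. Decomposing $X = N(Y) \cup_{Y \times \{0,1\}} (X \setminus \mathring N(Y))$, where $N(Y) \cong Y \times [0,1]$ is a bicollar of $Y$, the Mayer--Vietoris sequence together with the vanishing of $H_1(Y;\Z)$ and $H_2(Y;\Z)$ shows that every class in $H_2(X;\Z)$ can be represented by a $2$--cycle disjoint from $N(Y)$. Since $t_X$ is supported in $N(Y)$, it fixes such representatives and therefore acts as the identity on $H_2(X;\Z)$. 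Applied with the loop $S^1 \to \Homeo(Y)$, the main statement then immediately yields a topological isotopy of $t_X$ to the identity through $\Homeo(X,\del)$.

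For the main statement, the strategy is to reduce to the closed case, where the Quinn--Perron theorem asserts that a self-homeomorphism of a closed simply-connected topological $4$--manifold acting trivially on $H_2$ is topologically isotopic to the identity. First I would, after a preliminary isotopy in $\Homeo(X,\del)$ using the collar uniqueness theorem, arrange that $f$ is the identity on an open collar neighborhood of $\del X$. Next I would choose a topological cap $W$ with $\del W = -\del X$ as follows: when $\del X$ is an integral homology $3$--sphere, take $W$ to be the Freedman contractible topological cap; more generally, when $\del X$ has the rational homology of $S^3$ or of $S^1 \times S^2$, choose $W$ with $H_\ast(W;\Q)$ complementary so that $\widehat X := X \cup_{\del X} W$ is a closed, simply connected topological $4$--manifold (for the $S^1 \times S^2$ case one can take $W \simeq S^1 \times D^3$-type models; simple connectivity of $\widehat X$ follows from van Kampen since $X$ is simply connected and $\del X$, $W$ are chosen so that $\pi_1$-generators of $\del X$ are killed on each side).

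Extend $f$ to $\widehat f \in \Homeo(\widehat X)$ by declaring it to be the identity on $W$. A Mayer--Vietoris argument shows that $\widehat f_\ast = \id$ on $H_2(\widehat X;\Z)$: every class in $H_2(\widehat X;\Z)$ decomposes into a class from $H_2(X;\Z)$ and a class from $H_2(W;\Z)$ modulo the image of $H_2(\del X;\Z)$, each of which is fixed by $\widehat f_\ast$. The Quinn--Perron theorem then produces a topological isotopy $\widehat H_t : \widehat X \to \widehat X$ from $\widehat f$ to $\id_{\widehat X}$.

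The hard part will be to upgrade $\widehat H_t$ to an isotopy of $X$ relative to $\del X$. Because the isotopy $\widehat H_t$ a priori moves points of $W$, one needs a relative version of the argument: I would use the simple connectivity of $\widehat X$ and the controlled topology of the cap $W$ to modify $\widehat H_t$, via an ambient isotopy pushing $W$ back to itself (using topological regular neighborhood theory of a point/arc/circle, according to the case), so that the modified isotopy preserves $W$ pointwise. This step is exactly where the hypotheses on $\del X$ (rational homology $S^3$ or $S^1\times S^2$) are essential, since they allow the cap $W$ to be chosen small and flexible enough that its embedding up to topological isotopy is rigid. Restricting the modified isotopy to $X$ then yields the required topological isotopy from $f$ to $\id_X$ through $\Homeo(X,\del)$.
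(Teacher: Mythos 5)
First, note that the paper does not prove \cref{thm: OrsonPowell} at all: it is quoted verbatim from Orson--Powell \cite[Corollary C]{OrsonPowell2022} and used as a black box, so there is no internal proof to compare against. Your reduction of the ``in particular'' clause is fine and is surely how the authors read it: since $H_1(Y;\Z)=H_2(Y;\Z)=0$, Mayer--Vietoris shows $H_2(X\setminus \mathring{N}(Y);\Z)\to H_2(X;\Z)$ is onto, and $t_X$ is the identity off $N(Y)$, so $(t_X)_*=\id$ and the first part applies.

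The main part of your argument, however, has a genuine gap at exactly the point you flag as ``the hard part.'' Capping off with $W$ and applying Quinn--Perron to $\widehat{X}=X\cup_{\del X}W$ only produces an \emph{absolute} isotopy $\widehat{H}_t$ from $\widehat{f}$ to $\id_{\widehat X}$. The passage from an absolute isotopy to one rel $\del X$ is not a matter of regular-neighborhood rigidity of the cap: demanding that a modified isotopy preserve $W$ \emph{pointwise} is essentially a restatement of the original problem, and an ambient isotopy returning $\widehat{H}_t(W)$ to $W$ only setwise leaves you, at time $1$, with a homeomorphism of the pair $(\widehat{X},W)$ that need not restrict to the identity on $W$ or on $\del X$; worse, the loop of homeomorphisms of $\del X$ traced out during the isotopy is precisely the obstruction. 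Concretely, the discrepancy between ``isotopic to $\id$'' and ``isotopic to $\id$ rel $\del$'' is governed by the fibration $\Homeo(X,\del)\to\Homeo(X)\to\Homeo(\del X)$ and its connecting map $\pi_1(\Homeo(\del X))\to\pi_0(\Homeo(X,\del))$, whose image consists exactly of boundary Dehn twists along loops in $\Homeo(\del X)$ (this is the diagram \eqref{eq: big commutative diagram} in \cref{section Loops of diffeomorphisms}). Showing that these topological Dehn twists vanish in $\pi_0(\Homeo(X,\del))$ is the substantive content of Orson--Powell's Corollary C --- the very fact this paper leverages, since its whole point is that the smooth analogue fails --- and it is established there by a relative version of Quinn's pseudo-isotopy-implies-isotopy machinery together with surgery-theoretic input, not by capping off. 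As written, your final step assumes the conclusion.
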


We shall refer to \cref{thm: OrsonPowell} as the {\it Orson--Powell theorem}.

\subsection{Proofs of \cref{main thm1: Dehn twist on definite,contractible}}
Now we will prove a few building blocks of the proof of the result on Dehn twists along $\Sigma(2,3,6n+7)$,  \cref{main thm1: Dehn twist on definite}.
More strongly, we will prove the following:

\begin{thm}
\label{main thm1 generelized: Dehn twist on definite}
Let $n \geq 0$ and let $X$ be a compact oriented positive-definite smooth 4-manifold with $b_1(X)=0$ bounded by $Y=\Sigma(2,3,6n+7)$.
Then the boundary Dehn twist $t_X : X \to X$ is not smoothly isotopic to the identity through $\Diff (X, \partial)$.
\end{thm}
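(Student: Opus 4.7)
I plan to argue by contradiction. Assume that $t_X$ is smoothly isotopic to $\id_X$ through $\Diff(X,\partial)$. The strategy is to combine this with an equivariant cap to produce a smooth $\Z/2$-family of 4-manifolds over $\RP^2$, to which the family Seiberg--Witten constraint of \cref{thm: KT} (or \cref{thm: Baraglia}) applies.

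The setup uses three ingredients: the square-root diffeomorphism $T_X : X \to X$ from \eqref{eq: T_YX}, with $T_X|_{\partial X}=\sigma$ and $T_X^2=t_X$; the Milnor fiber $M := M(2,3,6n+7)$, realized as the double branched cover of $D^4$ along a pushed-in Seifert surface of $T(3,6n+7)$; and the covering involution $\bar\sigma : M \to M$. In affine coordinates $\bar\sigma(z_1,z_2,z_3) = (-z_1, z_2, z_3)$, so $\bar\sigma$ is \emph{holomorphic} and preserves the canonical spin$^c$ structure of $M$. Moreover $\bar\sigma|_{\partial M}=\sigma$, $\bar\sigma^2 = \id_M$, and $\bar\sigma^*$ acts as $-\id$ on $H^2(M;\R)$ since $M/\bar\sigma \cong D^4$ is acyclic.

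Form the closed 4-manifold $Z := X \cup_Y (-M)$ and define $F := T_X \cup \bar\sigma : Z \to Z$; this is well-defined because $T_X|_Y = \sigma = \bar\sigma|_Y$. Using $\bar\sigma^2=\id$ and the standing assumption $t_X = T_X^2 \simeq \id_X$ rel $\partial$, we get $F^2 \simeq \id_Z$. A closed-manifold analogue of \cref{lem: RP2 family constrcution} then yields a smooth fiber bundle $Z \to E \to \RP^2$. Choosing the $F$-invariant spin$^c$ structure on $Z$ obtained by gluing the canonical spin$^c$ structure of $M$ (preserved by the holomorphic $\bar\sigma$) with a compatible one on $X$, and invoking \cref{lem: Fiberwise spinc}, the structure group lifts to $\Aut(Z,\fraks)$. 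The $F^*$-eigenspace splitting $H^+(Z) = H^+(X) \oplus H^-(M)$ (with $F^* = \id$ on the first summand and $-\id$ on the second) identifies the associated bundle as $H^+(E) \cong \underline{\R}^{b_2(X)} \oplus \gamma_\R^{b^-(M)}$, where $\gamma_\R \to \RP^2$ is the tautological real line bundle.

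The final step, and the main obstacle I anticipate, is to invoke \cref{thm: KT} in a way that tolerates the nonzero rank of the trivial summand $\underline{\R}^{b_2(X)}$ coming from the positive-definite part of $X$. Two natural routes present themselves: (a) apply \cref{thm: KT} directly to the relative sub-family with fiber $X \subset Z$ by producing a modified $f \in \Diff(X,\partial)$ from $T_X$ via equivariant cobordism with $\bar\sigma$ that cancels the trivial summand and makes $f^*|_{H^+(X)} = -\id$; or (b) replace the positive-definite $X$ inside $Z$ by a Donaldson-type blow-down to an equivariant model $Z'$ with $b^+(Z')=2$ preserving the $F$-action and spin$^c$ structure, and apply \cref{thm: Baraglia} to $Z'$. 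Either route yields a Fr{\o}yshov-type inequality $\frac{c_1(\fraks)^2 - \sigma}{8} \le \delta(\Sigma(2,3,6n+7),\fraks|_Y)$. Combined with Donaldson's diagonalization, which forces $c_1(\fraks)^2 \ge b_2(X) = \sigma(X)$ for any characteristic class on the positive-definite $X$ (so the left-hand side is nonnegative), and with the explicit (sufficiently negative) Fr{\o}yshov invariant value of $\Sigma(2,3,6n+7)$ at the appropriate spin$^c$ structure, one then extracts the contradiction.
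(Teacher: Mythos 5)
Your high-level strategy --- glue the square root $T_X$ of the Dehn twist to the covering involution of a Milnor fiber, build a family over $\RP^2$, and apply the families Seiberg--Witten constraint --- is exactly the paper's. But the execution has two genuine gaps. First, the gluing is done with the wrong orientation. Forming $Z = X \cup_Y (-M)$ reverses the Milnor fiber, so its $b^-$ becomes positive classes while $X$ stays positive-definite; hence $b^+(Z) = b_2(X) + b^-(M)$ and $H^+(E) \cong \underline{\R}^{b_2(X)} \oplus \gamma_{\R}^{\oplus b^-(M)}$, whose Stiefel--Whitney classes vanish above degree $2$ over $\RP^2$. The hypothesis $w_{b^+}(H^+(E)) \neq 0$ of \cref{thm: Baraglia} therefore fails, and neither of your proposed repairs works: an equivariant ``Donaldson-type blow-down'' of an arbitrary positive-definite $X$ does not exist (diagonalizability of the form does not split off $\CP^2$ summands of the manifold), and route (a) is not an argument --- $T_X$ does not lie in $\Diff(X,\del)$ and acts trivially on $H^+(X)$, so there is nothing to ``cancel.'' The paper instead forms $W_0 \cup_{Y_0} W_1 \cup_{\psi} (-X)$: it reverses the orientation of $X$ (making it negative-definite), keeps the Milnor fiber's orientation, and crucially uses $W_0 = M(2,3,7)$ with $b^+ = 2$ together with the \emph{negative-definite} equivariant cobordism $W_1$ of \cref{lem: equiv cob} from $\Sigma(2,3,7)$ to $\Sigma(2,3,6n+7)$, rather than $M(2,3,6n+7)$ itself (whose $b^+$ grows with $n$). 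This pins $b^+$ of the closed manifold at exactly $2$ with $f^*=-1$ on all of $H^+$, so $H^+(E) \cong \gamma_{\R}^{\oplus 2}$ and $w_2 \neq 0$, as \cref{thm: RP2 spinc pres} requires.

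Second, your source of the final contradiction is wrong: $\delta(\Sigma(2,3,6n+7)) = 0$, not ``sufficiently negative'' --- indeed the paper needs this vanishing precisely to invoke Elkies and conclude that the form of $-X$ is diagonalizable. With $\delta = 0$ the constraint reads $c_1(\fraks)^2 - \sigma \leq 0$, and a merely nonnegative left-hand side yields no contradiction. The strict positivity comes from the spin piece $W_0 = M(2,3,7)$, which contributes $c_1^2 - \sigma = 0 - (-8) = 8$, while the diagonal negative-definite pieces $W_1$ and $-X$ contribute $0$ once $c_1(\fraks)$ is chosen to be $(-1,\dots,-1)$ on each. This specific choice of spin$^c$ structure, the verification that $f$ preserves it, and the resulting computation $c_1(\fraks)^2 - \sigma = 8 > 0$ are the missing content of your last step.
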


First, we summarize a property of the Milnor fiber $M(2,3,7)$ needed for the proof of \cref{main thm1 generelized: Dehn twist on definite}.
Let $-E_8$ denote the negative-definite, unimodular lattice of rank 8 and $H$ denote the hyperbolic form of size 2, the intersection form of $S^2 \times S^2$.

\begin{lem}
\label{lem: equiv bound}
Let $W_0 = M(2,3,7)$.
Then $W_0$ is a compact oriented simply-connected smooth 4-manifold with the intersection form $-E_8 \oplus 2H$, and there is a smooth involution $\tilde{\sigma}_0 : W_0 \to W_0$ that satisfies the following properties:
\begin{itemize}
\item $\tilde{\sigma}_0$ restricts to $\sigma : \Sigma(2,3,7) \to \Sigma(2,3,7)$.
\item $\tilde{\sigma}_0$ acts on $H^+(W_0)$ via multiplication of $-1$.
\end{itemize}
\end{lem}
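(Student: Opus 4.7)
The plan is to construct $\tilde\sigma_0$ as the deck transformation of the branched-cover description of $M(2,3,7)$ recalled in \cref{Preliminary: definition of Dehn twists and Milnor fibers} and to extract its homological properties from the fact that the quotient is the contractible disk $D^4$.

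\textbf{Step 1 (intersection form of $W_0$).} I would first record the standard topology of the Milnor fiber. The manifold $W_0 = M(2,3,7)$ is smooth, compact, simply-connected, with $H_2$ of rank equal to the Milnor number $(2-1)(3-1)(7-1)=12$, and with signature $\sigma(W_0) = -8$ by Brieskorn's formula. Thus $b^+(W_0)=2$ and $b^-(W_0)=10$, and since $\partial W_0=\Sigma(2,3,7)$ is an integral homology sphere, the intersection form is unimodular. The Milnor fiber of $(2,3,7)$ is spin (for instance, by inspecting the standard plumbing diagram with all even self-intersections, or by observing that $\Sigma(2,3,7)$ has unique spin structure extending over $W_0$); the classification of indefinite even unimodular forms then forces the intersection form to be $-E_8\oplus 2H$.

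\textbf{Step 2 (construction of $\tilde\sigma_0$).} By the branched-cover description recalled in \cref{Preliminary: definition of Dehn twists and Milnor fibers}, $\pi:W_0\to D^4$ is a smooth double branched cover along a pushed-in Seifert surface $S$ for the torus knot $T_{3,7}$, compatibly with the double branched cover $\Sigma(2,3,7)\to S^3$ along $T_{3,7}$. I take $\tilde\sigma_0:W_0\to W_0$ to be the deck involution of this branched cover; concretely, as a self-map of the Brieskorn variety, $\tilde\sigma_0(z_1,z_2,z_3) = (-z_1,z_2,z_3)$. It is smooth, orientation-preserving, and by construction its restriction to the boundary is the covering involution $\sigma$.

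\textbf{Step 3 (action on $H^+$).} Since $W_0/\tilde\sigma_0\cong D^4$ is contractible, the transfer argument for finite group actions with rational coefficients gives
\[
H^2(W_0;\mathbb{Q})^{\tilde\sigma_0} \;\cong\; H^2(W_0/\tilde\sigma_0;\mathbb{Q}) \;=\; 0.
\]
As $\tilde\sigma_0^\ast$ is an involution on $H^2(W_0;\mathbb{R})$ with trivial $+1$-eigenspace, it must equal $-\mathrm{id}$ on the whole of $H^2(W_0;\mathbb{R})$. In particular, $\tilde\sigma_0^\ast$ preserves every maximal positive-definite subspace $H^+(W_0)\subset H^2(W_0;\mathbb{R})$ and acts there by multiplication by $-1$, which is the remaining claim.

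The main subtlety I expect is in Step~1, namely pinning down the intersection form as exactly $-E_8\oplus 2H$ rather than merely recording $(b^+,b^-,\sigma)$; this is classical and can be handled either via an explicit Kirby/plumbing presentation of $M(2,3,7)$ or through the spin-and-classification route indicated above. Steps~2 and~3 are then essentially formal consequences of the branched-cover description and the contractibility of $D^4$.
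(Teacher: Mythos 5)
Your proposal is correct and follows the paper's own approach: cite the known intersection form of $M(2,3,7)$, take $\tilde\sigma_0$ to be the deck transformation of the double branched cover over $D^4$, and use the contractibility of the quotient $D^4$ to conclude that the action on $H^+$ (indeed on all of $H^2(W_0;\mathbb{R})$) is $-\mathrm{id}$. The paper phrases Step~3 more tersely (``$W_0/\tilde\sigma_0 = D^4$ has trivial $b^+$''), but the underlying transfer argument is the same.
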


\begin{proof}
The intersection form of $M(2,3,7)$ is well-known (see such as \cite[Theorem 8.3.2]{GS99}).
Let $\tilde{\sigma}_0 : W_0 \to W_0$ be the covering involution associated with the above branched covering description of $M(2,3,7)$.
Then $\tilde{\sigma}_0$ restricts to $\sigma$ as described in \cref{Preliminary: definition of Dehn twists and Milnor fibers}.
Since the quotient $W_0/\tilde{\sigma}_0$ is $D^4$, which has trivial $b^+$, we have that $\tilde{\sigma}_0$ acts on $H^+(W_0)$ via multiplication by $-1$.
%Let us take a smoothly embedded surface $S$ in $D^4$ bounded by the torus knot $T_{3,7}$ that minimizes the genus.
%Then $g(S)$ is (known as the Milnor conjecture~\cite{KM93}) given by $(3-1)(7-1)/2=6$.
%Let $W_0$ be the double branched covering of $D^4$ along $S$.
%Then $W_0$ is simply-connected spin smooth compact 4-manifold bounded by $\Sigma(2,3,7)$.
%The intersection form can be easily computed from the genus of $S$ and the signature $\sigma(T_{3,7})=-8$, and is given by $-E_8 \oplus 2H$ (see, for example, \cite[Lemma~4.2]{KMT21}).
%Let $\tilde{\sigma}_0 : W_0 \to W_0$ be the covering involution.
%Then it restricts to $\sigma$ by construction.
%Also, since the quotient of $W_0$ by $\tilde{\sigma}_0$ has trivial homology, we have that $\tilde{\sigma}_0$ acts on $H^+(W_0)$ via multiplication by $-1$.
%This completes the proof.
\end{proof}

The next \lcnamecref{lem: equiv cob} is key to producing infinitely many examples of exotic Dehn twists. We would like to thank the anonymous referee for suggesting a simpler version of the proof.
\begin{lem}\label{lem: equiv cob}
Given positive integers $q > p > 0$ coprime to 6, with $6 | (q - p)$, there is a simply-connected smooth cobordism $W_1$ from $\Sigma(2, 3, p)$ to $\Sigma(2, 3, q)$ that satisfies the following
conditions:
\begin{itemize}
    \item The intersection form of $W_1$ is $(-1)^{\oplus \frac{q-p}{6}}$.
    \item There is an involution $\tilde{\sigma}_1: W_1 \rightarrow W_1$  that restricts to the involution $\sigma$ on both boundary components.
    \item $\tilde{\sigma}$ acts as the identity on $H_2(W_1)$.
\end{itemize}

\end{lem}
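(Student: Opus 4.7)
The plan is to reduce to the case $m=1$ by concatenation and then build a single equivariant cobordism in that case. First I would observe that it suffices to construct, for each $p_0$ coprime to $6$, a simply-connected cobordism $W^{(1)}\colon\Sigma(2,3,p_0)\to\Sigma(2,3,p_0+6)$ with intersection form $(-1)$ and an involution $\tilde\sigma^{(1)}$ restricting to $\sigma$ on each end and acting trivially on $H_2$. Iterating and stacking $m=(q-p)/6$ such cobordisms along their common boundaries yields $W_1$ with all the desired properties: simple-connectedness follows by iterated van Kampen's theorem since each intermediate boundary $\Sigma(2,3,p_0+6i)$ is simply-connected; the intersection form is the orthogonal direct sum $(-1)^{\oplus m}$; the involutions glue compatibly because they all restrict to $\sigma$ on the common boundaries; and triviality on $H_2$ is preserved under orthogonal sum.

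For the base case $q=p+6$, I would attach a single $\sigma$-equivariant $(-1)$-framed $2$-handle to $\Sigma(2,3,p)\times[0,1]$ along a carefully chosen $\sigma$-invariant knot $K\subset\Sigma(2,3,p)\times\{1\}$. The cleanest way to specify such a $K$ is through the double-branched-cover picture: take $K$ to be the preimage, under the double branched covering $\Sigma(2,3,p)\to S^3$, of a suitable unknot $U\subset S^3$ that links the branch knot $T(3,p)$ in a manner so that the induced downstairs surgery realizes a Rolfsen-type transformation $(S^3,T(3,p))\leadsto(S^3,T(3,p+6))$. Equivalently, one may view $W^{(1)}$ as the double branched cover of a $2$-handle cobordism attached to $S^3\times[0,1]$, whose branch surface interpolates between $T(3,p)$ and $T(3,p+6)$. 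The involution $\tilde\sigma^{(1)}$ is then the natural extension of $\sigma$ across the equivariant $2$-handle.

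The verification of the required properties then proceeds as follows. Simple-connectedness of $W^{(1)}$ is immediate from $2$-handle attachment to the simply-connected space $\Sigma(2,3,p)\times[0,1]$; the intersection form is $(-1)$, coming from the single $(-1)$-framed $2$-handle; and triviality of $\tilde\sigma^{(1)}_*$ on $H_2(W^{(1)})\cong\mathbb{Z}$ follows because the generator admits a $\tilde\sigma^{(1)}$-invariant representative, namely the $2$-handle's core disk glued to the $\sigma$-invariant sphere obtained by lifting a disk bounded by $U$ downstairs. The hard part will be pinning down $K$, the framing, and the corresponding downstairs Rolfsen twist so that the surgery genuinely produces $\Sigma(2,3,p+6)$ with the expected $\sigma$-action on its boundary; this amounts to an explicit Kirby-calculus verification at the level of the double-branched-cover picture, and is the step where I expect the most care is required.
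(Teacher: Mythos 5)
Your outer template agrees with the paper's: reduce to $q=p+6$ by stacking, then attach a single $\sigma$-equivariant $(-1)$-framed $2$-handle to $\Sigma(2,3,p)\times[0,1]$ along a $\sigma$-invariant knot. But the heart of the lemma is precisely the step you defer: exhibiting the invariant knot, its framing, and the verification that the surgered boundary is $\Sigma(2,3,p+6)$ with the induced involution equal to $\sigma$. You acknowledge this is ``the hard part'' and propose to find it by an unspecified Rolfsen-type move on an unknot linking $T_{3,p}$ downstairs, but you never pin down $U$, its linking number, or the framing, and the branched-cover route has real subtleties you do not address (e.g.\ the preimage of $U$ is a single circle only if $\mathrm{lk}(U,T_{3,p})$ is odd; surgery coefficients do not transfer naively between the base and the double cover; a disk bounded by $U$ meets the branch locus, so its lift is not obviously the invariant surface you describe). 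As written, the proof is therefore incomplete at its central point.

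The paper avoids all of this by working with the Seifert structure rather than the branched cover: $\sigma$ is the time-$\pi$ map of the standard circle action, so the multiplicity-$p$ singular fiber $F_3$ is automatically $\sigma$-invariant (indeed $\sigma$ restricts to a free $\pi$-rotation on it, and near $F_3$ has the model $(x,y)\mapsto(x,-y)$ on $D^2\times S^1$), and it is a standard fact that $(-1)$-surgery on $F_3$ yields $\Sigma(2,3,p+6)$, with the induced symmetry again the half-turn of the circle action, i.e.\ $\sigma$. The extension of the involution over the $2$-handle and the triviality of its action on $H_2$ then follow from the equivariant handle-attachment discussion in \cite[Subsection 5.2]{DHM20}. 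If you want to salvage your version, replace the ``suitable unknot'' by the exceptional fiber of multiplicity $p$ (equivalently, downstairs, a specific exceptional fiber of the Seifert fibration of $S^3$); otherwise the existence of your $K$ remains unestablished. Separately, your argument that $\tilde\sigma^{(1)}_*=\mathrm{id}$ on $H_2\cong\mathbb{Z}$ needs care: invariance of a representative surface only gives $\tilde\sigma^{(1)}_*[S]=\pm[S]$, and one must check that the involution preserves the orientation of the core disk (it does, being a rotation), which is the point the paper delegates to \cite{DHM20}.
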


\begin{proof}
Without loss of generality, we may assume
$ q - p = 6$ (otherwise, we can just stack the cobordisms together). Recall that the standard circle action makes
\begin{gather*}
    \rho : S^1 \times \Sigma(2, 3, p) \rightarrow \Sigma(2, 3, p) \\
    (\theta, z_1, z_2, z_3) \mapsto (e^{3p \theta i}z_1, e^{2p \theta i}z_2, e^{6 \theta i}z_3)
\end{gather*}
$\Sigma(2,3,p)$ it into a Seifert fibered space with 3 singular fibers of multiplicity 2, 3, $p$ respectively, denoted by $F_1$, $F_2$ and $F_3$. Then $\Sigma(2, 3, p+6)$ is obtained by doing a $(-1)$-surgery along $F_3$. Let $W_1$ be obtained by attaching a 2-handle to $\Sigma(2, 3, p) \times [0, 1]$ along $F_3$. Then $W_1$ has intersection form $(-1)$.

Now we define $\tilde{\sigma}_1$. Firstly, we note that $\sigma : \Sigma(2, 3, p) \rightarrow \Sigma(2, 3, p)$ is just $\rho(\pi,\_)$. In
particular, $\sigma$ fix $F_1$ pointwise and acts as a 2-periodic (i.e. fixing the orientation) $\pi$-rotation on all other fibers. So in a tubular neighbourhood of $F_3$, $\sigma$ can be written as 
\begin{gather*}
\sigma: \mathbb{D}^2 \times S^1 \rightarrow \mathbb{D}^2 \times S^1 \\
 (x,y) \mapsto (x,-y).
\end{gather*}
This action extends over the 2-handle attachment associated to $(-1)$-surgery and gives an involution $\tilde{\sigma}_1$ on $W_1$, see for example \cite[Subsection 5.2]{DHM20}. Moreover, the restriction of $\tilde{\sigma}_1$ on $\Sigma(2,3,p+6)$ also acts as a 2-periodic symmetry, given by $\pi$-rotation on all fibers except $F_1$. Hence $\tilde{\sigma}_1|_{\Sigma(2,3,p+6)}$ also equals $\sigma$ on $\Sigma(2,3,p+6)$. Lastly, since the involution $\tilde{\sigma}_1$ is periodic with respect the fiber $F_3$, it was shown in \cite[Subsection 5.2]{DHM20} that $\tilde{\sigma}_1$ acts as identity on the intersection form of $W_1$.
\end{proof}

Now we are ready to prove a generalization of the statement in \cref{main thm1: Dehn twist on definite} for the smooth category, \cref{main thm1 generelized: Dehn twist on definite}:

\begin{proof}[Proof of \cref{main thm1 generelized: Dehn twist on definite}]
We use equivariant cobordisms $(W_0, \tilde{\sigma}_0)$ and $(W_1, \tilde{\sigma}_1)$ given in \cref{lem: equiv bound,lem: equiv cob} (when $n=0$, we take $W_1$ to be $\Sigma(2,3,7) \times [0,1]$).
Set $Y_0 = \Sigma(2,3,7)$ and $Y_1 = \Sigma(2,3,6n+7)$.
By the conclusion of \cref{lem: equiv cob}, there is a self-diffeomorphism $\psi : Y_1 \to Y_1$ so that $\tilde{\sigma}_1|_{Y_1} = \psi^{-1} \circ \sigma \circ \psi$.
Define a smooth closed 4-manifold $M$ with $b^+(M)=2$ and $b_1(M)=0$ and a diffeomorphism $f : M \to M$ by 
\begin{align*}
M &= W_0 \cup_{\id_{Y_0}} W_1 \cup_{\psi} (-X),\\ 
f &= \tilde{\sigma}_0 \cup_{\id_{Y_0}} \tilde{\sigma}_1 \cup_{\psi} T_X,
\end{align*}
where $T_X$ is the diffeomorphism defined in \eqref{eq: T_YX} and $\del W_1|_{Y_1}$ and $\del(-X)$ is identified by $\psi$ so that $f$ is a well-defined diffeomorphism of $M$.
Note that $H^+(M)$ is isomorphic to $H^+(W_0)$, and hence $f$ acts on $H^+(M)$ via multiplication by $-1$.

Suppose that the boundary Dehn twist $t_X$ on $X$ is isotopic to the identity through $\Diff(X,\del)$.
Then $f^2$ is smoothly isotopic to the identity.
This is because $f^2$ is supported only in a collar neighborhood of $\del(-X)$, and it is given by (the extension by the identity of) the Dehn twist $t_{Y_1}$.
We will obtain a contradiction from this. 

We will define a spin$^c$ structure $\fraks$ on $M$ as follows.
First, note that the intersection form of $X$ is diagonalizable, due to the Elkies theorem~\cite{elkies} combined with that the Fr{\o}yshov invariant of $Y_1$ is zero.
Let $\fraks_0$ be the unique spin structure on $W_0$.
Let $\fraks_1$ be a spin$^c$ structure on $W_1$ that restricts to a generator of each $(-1)$-summand of the intersection form of $W_1$.
Let $\fraks_X$ be a spin$^c$ structure on $X$ that restricts to a generator of each $(-1)$-summand of the intersection form of $X$, under a choice of the basis of $H^2(X;\Z)/{\mathrm{Tor}}$.
We define $\fraks = \fraks_0 \cup \fraks_1 \cup \fraks_X$.

We claim that $f$ preserves $\fraks$.
First, since the spin$^c$ on $Y_i$ for each $i=0,1$ is unique, there is no ambiguity with gluing spin$^c$ structures along $Y_i$.
Also, a spin$^c$ structure on $W_i$ is determined by its first Chern class since $H^2(W_i;\Z)$ has no 2-torsion.
Hence, by construction, $f|_{W_i}$ preserves $\fraks_i$ for each $i=0,1$.
Since $f|_{-X}$ is identity expect for a collar neighborhood of $-X$, it follows that $f|_{-X}$ preserves $\fraks_X$ from that the collar neighborhood of $-X$ has unique spin$^c$ structure.
Thus we have that $f$ preserves $\fraks$.

However, since we have $c_1(\fraks)^2 -\sigma(M) >0$ by a direct computation, this contradicts \cref{thm: RP2 spinc pres}.
This completes the proof.    
\end{proof}

Now we can prove \cref{main thm1: Dehn twist on definite} immediately:

\begin{proof}[Proof of \cref{main thm1: Dehn twist on definite}]
\cref{main thm1: Dehn twist on definite} follows from \cref{main thm1 generelized: Dehn twist on definite} and the Orson--Powell theorem (\cref{thm: OrsonPowell}).
\end{proof}

\begin{rem}
\label{rem: stabilization by CP2}
We note a consequence of \cref{main thm1: Dehn twist on definite} on the stabilization by $\CP^2$.
\cref{main thm1: Dehn twist on definite} implies that the Dehn twist $t_X$ remains relatively exotic after taking (inner) connected sum of $X$ with arbitrarily many finite copies of $\CP^2$. 
In particular, it shows that the Dehn twist along a general 3-manifold needs not to be (smoothly and) relatively isotopic to the identity after connected sums of $\CP^2$.
In contrast, the Dehn twist along $S^3$ turns relatively isotopic to the identity after the connected sum of a single copy of $\CP^2$ due to Giansiracusa~\cite[Corollary~2.5]{Gian08}.
\end{rem}

%Analogously, we can give a proof of the result on contractible 4-manifolds (\cref{contractible}).
%More generally:

%\begin{thm}
%\label{thm: contractible general}
%Let $X$ be a compact oriented positive-definite smooth 4-manifold with $b_1(X)=0$ bounded by $\Sigma(2,3,13)$ or $\Sigma(2,3,25)$.
%Then the boundary Dehn twist $t_X : X \to X$ is not smoothly isotopic to the identity through $\Diff(X,\del)$.
%If $X$ is simply-connected, $t_X$ is relatively exotic.
%\end{thm}

\begin{proof}
The proof is similar to that of \cref{main thm1: Dehn twist on definite}.
\end{proof}

This is now a good place to prove a result on Dehn twists on closed 4-manifolds, \cref{thm: closed Dehn}.

\begin{proof}[Proof of \cref{thm: closed Dehn}]
Let $X$ be a simply-connected smooth 4-manifold bounded by $Y=\Sigma(2,3,6n+7)$ with intersection form $(+1)$.
We may get such $X$ by regarding $Y$ as a $(+1)$-surgery of a twist knot and taking the surgery trace.
Let $(M,f)$ be as in the proof of \cref{main thm1 generelized: Dehn twist on definite}.
Then it is easy to see that the intersection form of $M$ is $(1)^{\oplus 2} \oplus (-1)^{\oplus (n+11)}$.
As an easy consequence of Van-Kampen's theorem, we can see that $M$ is simply-connected.
Thus it follows from Freedman theory that $M$ is homeomorphic to $2\CP^2\#(n+11)(-\CP^2)$.

By constrcution, $Y_1=\Sigma(2,3,6n+7)$ is embedded in $M$.
Set $M'=M\#k(-\CP^2)$ for $k \geq 0$.
The proof of \cref{main thm1 generelized: Dehn twist on definite} applied to $M'$ instead of $M$ implies that $f^2$, which coincides with the Dehn twist along $Y$, is not smoothly isotopic to the identity on $M'$.
\end{proof}

\subsection{Brieskorn spheres $\Sigma(2,3,6n+11)$}
\label{subsec: 12n-1}

Now we prove a result on the family $\Sigma(2,3,6n+11)$ (\cref{thm: More Dehn twists}).
To prove this, we will need a few lemmas:

\begin{lem}
\label{lem: equiv bound2}
Let $W_0 = M(2,3,11)$.
Then $W_0$ is a compact oriented simply-connected smooth 4-manifold with the intersection form $2(-E_8) \oplus 2H$, and there is a smooth involution $\tilde{\sigma}_0 : W_0 \to W_0$ that satisfies the following properties:
\begin{itemize}
\item $\tilde{\sigma}_0$ restricts to $\sigma : \Sigma(2,3,11) \to \Sigma(2,3,11)$.
\item $\tilde{\sigma}_0$ acts on $H^+(W_0)$ via multiplication of $-1$.
\end{itemize}
\end{lem}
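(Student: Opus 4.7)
The plan is to argue exactly as in the proof of \cref{lem: equiv bound}, just replacing the parameter $7$ with $11$. First, I would recall the branched covering description from \cref{Preliminary: definition of Dehn twists and Milnor fibers}: the Milnor fiber $M(2,3,11)$ is obtained as the double branched cover of $D^4$ along a pushed-in Seifert surface $S$ for the torus link $T_{3,11}$ in $\del D^4 = S^3$. This gives $W_0 = M(2,3,11)$ together with a tautological covering involution $\tilde{\sigma}_0 : W_0 \to W_0$ whose fixed set is $S$, and whose restriction to $\del W_0 = \Sigma(2,3,11)$ is the covering involution of $\Sigma(2,3,11) \to S^3$ branched over $T_{3,11}$, which is the involution $\sigma$ as defined in \cref{Preliminary: definition of Dehn twists and Milnor fibers}.

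Next, I would cite the standard computation of the intersection form of $M(2,3,11)$ (see e.g.~\cite[Section~8.3]{GS99}); this form is well known to be $2(-E_8) \oplus 2H$. Simple-connectivity of $M(2,3,11)$ is standard as well, since Milnor fibers of isolated hypersurface singularities are simply-connected. These are just the two structural properties asserted at the top of the statement.

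Finally, for the action on $H^+(W_0)$: the quotient $W_0/\tilde{\sigma}_0$ is diffeomorphic to $D^4$ by construction, so in particular $b^+(W_0/\tilde{\sigma}_0) = 0$. The transfer map shows that the $(+1)$-eigenspace of $\tilde{\sigma}_0^\ast$ on $H^2(W_0;\Q)$ injects (up to a factor of $2$) into $H^2(W_0/\tilde{\sigma}_0;\Q) = 0$; hence $\tilde{\sigma}_0^\ast$ acts as $-\id$ on $H^2(W_0;\Q)$. In particular there is a maximal positive-definite subspace preserved by $\tilde{\sigma}_0^\ast$ on which $\tilde{\sigma}_0^\ast$ acts as $-1$, giving the claim for $H^+(W_0)$.

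No serious obstacle is expected: the only subtlety is the conventional one of choosing $H^+(W_0)$ to be $\tilde{\sigma}_0^\ast$-invariant, which is automatic from the fact that $\tilde{\sigma}_0^\ast$ acts by $-1$ on all of $H^2(W_0;\R)$, so the entire $H^2(W_0;\R)$ is invariant and any positive-definite maximal subspace works.
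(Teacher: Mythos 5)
Your proof is correct and follows essentially the same route as the paper's: both take $\tilde{\sigma}_0$ to be the covering involution of the double branched cover description of $M(2,3,11)$ over $D^4$, cite the standard computation of the intersection form, and deduce the $-1$ action on $H^+(W_0)$ from the fact that the quotient $D^4$ has trivial $b^+$ (your transfer-map justification is just a slightly more explicit version of this last step).
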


\begin{proof}
As in the proof of \cref{lem: equiv bound}, the intersection form of $M(2,3,11)$ is well-known. For example see \cite[Corollary 7.3.23]{GS99}. 
Let $\tilde{\sigma}_0 : W_0 \to W_0$ be the covering involution associated with the branched covering description of $M(2,3,11)$.
Then $\tilde{\sigma}_0$ satisfies the desired properties.
%Let $S$ be a smoothly embedded genus-miniminzing surface $S$ in $D^4$ bounded by the torus knot $T_{3,11}$.
%Then $g(S)=10$.
%Let $W_0$ be the double branched covering of $D^4$ along $S$.
%Then $W_0$ is simply-connected spin smooth compact 4-manifold bounded by $\Sigma(2,3,11)$.
%Using $g(S)=10$ and $\sigma(T_{3,11})=-16$, we may see that the intersection form of $W_0$ is given by $2(-E_8) \oplus 2H$.
%Then $W_0$ with the covering involution $\tilde{\sigma}_0 : W_0 \to W_0$ are the desired ones.
\end{proof}

\begin{proof}[Proof of \cref{thm: More Dehn twists}]
The proof is again similar to the proof of \cref{main thm1: Dehn twist on definite}.
We use equivariant cobordisms $(W_1, \tilde{\sigma}_1)$ and $(W_0, \tilde{\sigma}_0)$ given in \cref{lem: equiv bound2,lem: equiv cob}.
Set $Y_0 = \Sigma(2,3,11)$ and $Y_1 = \Sigma(2,3,6n+11)$.
Then there is a self-diffeomorphism $\psi : Y_1 \to Y_1$ so that $\tilde{\sigma}_1|_{Y_1} = \psi^{-1} \circ \sigma \circ \psi$.

Let $-T_{Y_1} : Y_1 \times [0,1] \to Y_1 \times [0,1]$ be the diffeomorphism defined by \eqref{eq: -T} for $Y=Y_1$.
Define a smooth 4-manifold $X$ with $b^+(X)=2$ and a diffeomorphism $f : X \to X$ by 
\begin{align*}
X &= W_0 \cup_{\id_{Y_0}} W_1 \cup_{\psi} (Y_1 \times [0,1]),\\ 
f &= \tilde{\sigma}_0 \cup_{\id_{Y_0}} \tilde{\sigma}_1 \cup_{\psi} (-T_{Y_1}),
\end{align*}
where $\del W_1|_{Y_1}$ and $Y_1 \times \{0\}$ is identified by $\psi$ so that $f$ is a well-defined diffeomorphism of $X$.
Note that $H^+(X)$ is isomorphic to $H^+(W_0)$, and hence $f$ acts on $H^+(X)$ via multiplication by $-1$. 
Since $W_1$ consists only of 2-handles, the inclusion $Y_0 \to W_1$ induces a surjection $\pi_1(Y_0) \to \pi_1(W_1)$. Since we also know that $W_0$ is simply connected, Van-Kampen's theorem implies $X$ is also simply connected. 

Suppose that the boundary Dehn twist $t_X$ on $X$ is isotopic to the identity through $\Diff(X,\del)$.
Then $f^2$ is smoothly isotopic to the identity.
We will obtain a contradiction from this. 

Let $\fraks$ be the spin$^c$ structure on $X$ determined by 
\[
c_1(\fraks) = (0,-1,\ldots, -1),
\]
where $0$ is the zero of $H^2(W_0)$ and $(-1,\ldots, -1)$ is an element in $H^2(W_1;\Z)$ under a choice of basis.
Note that $f$ preserves $\fraks$ by construction.
However, since $c_1(\fraks)^2 -\sigma(X) >0$, this contradicts \cref{thm: rel RP2 spinc pres}.
This combined with the Orson--Powell theorem (\cref{thm: OrsonPowell}) completes the proof.
\end{proof}

\subsection{Other examples}
\label{subsection 279}
For the examples $\Sigma(2,3,7)$ and $\Sigma(2,3,11)$, the Milnor fibers of them are effectively used to construct interesting families of 4-manifolds parametrized by $\RP^2$. In this subsection, we give a way to use surface cobordism theory to find families of 4-manifolds. 

Let us first review the twisting operation for knots. 
For given knot $K$ in $S^3$ and an embedded 2-dimensional disk $D=D^2$ in $S^3$ which intersects with $K$ transversely at the interior of $D$, we do the $(-1/n)$-surgery along $\partial D$ in $S^3$. 
Since $\partial D$ is the unknot in $S^3$, $S^3_{-1/n}(\partial D)$ is diffeomorphic to $S^3$ (by a particular choice of a diffeomorphism) and thus we have another knot $K(D, n)$ as the image of the knot $K$ under the diffeomorphism. The operation $K \mapsto K(D, n)$ is called the {\it $(n,\operatorname{lk}(\partial D, K))$-twisting operation}, where $\operatorname{lk}(\partial D, K)$ denotes the linking number between $\partial D$ and $K$. In terms of knot theory, $K(D, n)$ is obtained as the $n$-full twist of strands along $\partial D$. 
%Note that $n=1$, $s=0$ or $2$ case recovers crossing change of knots. 
We will use the following result proven in \cite{KY00}: 
%\cite[Lemma 3.1]{NY01} 
\begin{lem}\cite[Lemma 4.4]{KY00} Suppose $n = \pm 1$.
    If there is a $(n,s)$-twisting operation from $K$ to $K'$, then there is a smooth annulus cobordism $S$ from $K$ to $K'$ inside $(-n) \CP^2 \setminus \operatorname{Int}D^4 \cup \operatorname{Int}D^4$ whose homology class is 
    \[
    [S] = s [\CP^1] \in H_2((-n) \CP^2 \setminus \operatorname{Int}D^4 \cup \operatorname{Int}D^4 ; \Z ).  
    \]
\end{lem}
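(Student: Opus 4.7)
The plan is to realize $W:=((-n)\CP^2)\setminus(\operatorname{Int}D^4\cup\operatorname{Int}D^4)$ as the trace of the $(-1/n)$-surgery on $\partial D$, and to construct the annulus $S\subset W$ as the trace of an isotopy of $K$ through the 2-handle that automatically picks up the $n$-twisting. Because $n=\pm 1$, the surgery coefficient $-1/n=\mp 1$ is an integer, so the surgery on the unknotted $\partial D\subset S^3\times\{1\}$ is the trace of a single 2-handle attachment: I set $W=(S^3\times[0,1])\cup h$, where $h=D^2\times D^2$ is glued along $\partial D^2\times D^2\cong\nu(\partial D)\subset S^3\times\{1\}$ with framing $\mp 1$. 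Since $\partial D$ is unknotted with this framing, $W$ is diffeomorphic to $((-n)\CP^2)\setminus(\operatorname{Int}D^4\cup\operatorname{Int}D^4)$ as a cobordism from $S^3\times\{0\}$ to the post-surgery copy of $S^3$, and the Seifert disk $D\subset S^3\times\{1\}$ together with the core $D_h$ of $h$ form an embedded $2$-sphere $\Sigma:=D\cup D_h$ of self-intersection $-n$, representing the generator $[\CP^1]$ of $H_2(W;\mathbb Z)\cong\mathbb Z$.

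I construct $S$ as follows. Begin with the product cylinder $C:=K\times[0,1]\subset S^3\times[0,1]$. Away from $\nu(\partial D)$, the cobordism $W$ is simply the product $S^3\times[0,1]$, so $C$ already restricts correctly to the post-surgery top boundary there. The top copy $K\times\{1\}$ meets $D$ transversely in points with signed sum $s=\operatorname{lk}(\partial D,K)$. At each such intersection point I remove a small disk from $C$ and fill in by a disk pushed across $h$ parallel to the cocore of the 2-handle. The resulting surface $S$ is smoothly embedded with lower boundary $K$; by the standard Kirby-calculus description of Rolfsen twists on a $\mp 1$-framed unknot, its upper boundary, read in the post-surgery $\partial_+W\cong S^3$, is precisely $K(D,n)=K'$. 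Thus $S$ is an annulus from $K$ to $K'$ inside $W$.

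To compute $[S]\in H_2(W,\partial W;\mathbb Z)\cong H_2(W;\mathbb Z)\cong\mathbb Z$, observe that $C$ is null-homologous relative to its boundary, being a product cylinder in the product region $S^3\times[0,1]$. Each of the $s$ local modifications alters the relative homology class by $\pm[\Sigma]$, with the sign determined by the local intersection sign of $K\times\{1\}$ with $D$. Summing these contributions yields $[S]=s\cdot[\Sigma]=s\cdot[\CP^1]$.

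The chief technical point, and precisely where the hypothesis $n=\pm 1$ enters essentially, is the verification that the disks inserted across $h$ realize exactly the $n$-fold twist defining $K(D,n)$ rather than some other configuration, together with the orientation bookkeeping ensuring that the $s$ signed contributions consolidate to $s\cdot[\CP^1]$ and not some other integer multiple. Both are handled by fixing compatible orientations of $D$, of $D_h$, and of the $D^2$-fibers of the attaching map $\phi\colon\partial D^2\times D^2\to\nu(\partial D)$, and then tracking these orientations through the surgery identification of $\partial_+W$ with $S^3$; the integrality forced by $n=\pm 1$ is what allows this local matching to be performed without any fractional correction.
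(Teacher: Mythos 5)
The paper does not prove this lemma: it is quoted verbatim from \cite[Lemma 4.4]{KY00}, so there is no in-paper argument to compare against. Judged on its own terms, your proposal gets the correct framework --- the trace of the $(\mp 1)$-surgery on the unknot $\partial D$ is a single $2$-handle attachment, $W \cong (-n)\CP^2 \setminus (\operatorname{Int}D^4 \cup \operatorname{Int}D^4)$, and the sphere $\Sigma = D \cup D_h$ of self-intersection $-n$ generates $H_2(W;\Z)$ --- but the middle of the argument contains a genuine error that then forces an ill-defined repair.

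The error is the assertion that the product cylinder $C = K \times [0,1]$ is null-homologous rel boundary. It is not: being null-homologous in $H_2(S^3\times[0,1],\partial(S^3\times[0,1]))$ does not transfer to $H_2(W,\partial W)$, because $\nu(\partial D)\times\{1\}$ lies in the interior of $W$, so $\partial(S^3\times[0,1])\not\subset\partial W$ and there is no map of pairs to invoke. In fact, if you push $D$ slightly into the interior to form the closed sphere $\Sigma$, then $C$ meets $\Sigma$ transversely exactly in the points of $K\cap D$, so $[C]\cdot[\Sigma]=s$; since the intersection form on $H_2(W)\cong\Z\langle[\Sigma]\rangle$ is unimodular (this is where $n=\pm1$ is essential), this already forces $[C]=s[\CP^1]$ for the appropriate choice of generator. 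Consequently the subsequent modification --- removing ``small disks from $C$'' at the points of $K\times\{1\}\cap D$ and gluing in disks ``parallel to the cocore'' --- is both unnecessary and not a well-defined operation: those intersection points lie on the \emph{boundary circle} of the annulus $C$, not in its interior, so there is no interior disk to excise there; and a disk parallel to the cocore of $h$ has its boundary on the belt-sphere region of $\partial_+W$, so gluing such disks in would change $\partial S$ and destroy the annulus structure. The correct proof is the simpler one your own setup already contains: take $S=C$ itself; its top boundary, read in $\partial_+W\cong S^3$ via the blow-down identification, is $K(D,n)=K'$ (this is exactly the Rolfsen-twist statement you cite, and is the only place the twisting enters), and $[S]=s[\CP^1]$ by the intersection computation with $\Sigma$ above. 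If you delete the modification step and replace the ``null-homologous'' claim by the intersection-number computation, the argument is complete.
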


It is proven in \cite[Theorem 1.1(1)]{LP22} that there is a $(-1,mn)$-twisting move from $T_{mn + m + 1, mn + 1}$ to $T_{mn + n + 1, m + 1}$ for every positive integers $m,n >0$.  The case $m=2, n=3$ gives $(-1,6)$-twisting operation from $T_{9, 7}$ to $T_{10, 3}$. 
It gives a smooth annulus cobordism from $T_{9, 7}$ to $T_{10, 3}$ in $\CP^2 \setminus (\operatorname{Int}D^4 \cup \operatorname{Int}D^4 )$ whose homology class is $6 [\CP^1]$.  By reversing the orientation, we get a smooth annulus cobordism $S$ from  $T_{10, 3}$ to $T_{9, 7}$ in $-\CP^2 \setminus \operatorname{Int}D^4 \cup \operatorname{Int}D^4 $ whose homology class is $6 [\CP^1]$.
Since $T_{10,3}$ bounds a smoothly embedded orientable surface $S'$ of genus $9$ in $D^4$, by capping off $S$ by $S'$, we obtain a smoothly and properly embedded surface $S^\#$ in   $-\CP^2 \setminus \operatorname{Int}D^4 $ bounded by $T_{9,7}$. Then we denote by $\Sigma(S^\#)$ the double branched covering space along $S^\#$. One can check $b^+(\Sigma(S^\#))=2$, $\sigma(\Sigma(S^\#)) =-16$ and $\Sigma(S^\#)$ is simply-connected and spin.  It is proven \cite[Table 1]{Tw13} that the Heegaard Floer $d$-invariant for $\partial \Sigma(S^\#) = \Sigma(2,7,9)$ satisfies $d(\Sigma(2,7,9))=2$. Using equivalence between the $d$-invariant and the Seiberg--Witten Fr\o yshov invariant \cite[Remark 1.1]{LRS18}, we get $\delta(\Sigma(2,7,9))=1$. 
Thus, by using $\Sigma(S^\#)$ instead of the Milnor fiber, one can repeat the argument given in \cref{main thm1 generelized: Dehn twist on definite} and show the Dehn twist along $\Sigma(S^\#)$ is relatively exotic. 
In fact, this relatively exotic Dehn twist survives after one stabilization, see \cref{one stabilization 279}.

\subsection{Proof of the results on one stabilization}

Let us now prove the result on exotic diffeomorphism surviving stabilization, \cref{thm: intro stabilization in K3}.

\begin{proof}[Proof of \cref{thm: intro stabilization in K3}]
From the Orson--Powell theorem (\cref{thm: OrsonPowell}), it is enough to prove that the Dehn twists along $\Sigma(2,3,7)$ and $\Sigma(2,3,11)$ are not smoothly isotopic to the identity through $\Diff(\mathring{K3}\#S^2\times S^2)$.

Firstly, let us consider $M(2,3,7)$.
Set $M=M(2,3,7)$, $N=K3 \setminus \mathop{\mathrm{Int}}M$, $Y=\del M$.
Let us consider the involution $\tilde{\sigma} = \tilde{\sigma}_0 : M \to M$ given in \cref{lem: equiv bound}.
First we will consider the case that $S^2 \times S^2$ is attached inside $\mathop{\mathrm{Int}}N$ when we form $Z :=\mathring{K3}\# S^2\times S^2$.
Let $-T_Y : Y \times [0,1] \to Y \times [0,1]$ be the diffeomorphism defined by \eqref{eq: -T}.
Fix a diffeomorphism $M \cong M \cup (Y \times [0,1])$, and let $f : Z \to Z$ be the diffeomorphism defined by extending $\tilde{\sigma} \cup (-T_Y)$ by the identity of $Z \setminus M$.
Then $f^2$ is the Dehn twist along $\Sigma(2,3,7)$, and $f$ acts on $H^+(Z)$ as $\diag(1,1,-1,-1)$ for an appropriate basis of $H^+(Z)$, thus $b^+(Z)-\dim H^+(Z)^f = 2$.

Let $\fraks$ be the unique spin structure on $Z$, which is of course preserved by $f$.
Thus, if the Dehn twist along $\Sigma(2,3,7)$ is isotopic to the identity through $\Diff(Z,\del)$,
(iii) of \cref{thm: rel RP2 spin pres} implies that $-\sigma(Z)/8 \leq \alpha(S^3)$, but this is a contradiction since $\alpha(S^3)=0$ \cite{Ma16}.

Next, let us consider the case that $S^2 \times S^2$ is attached inside $M$ when we form $Z =\mathring{K3}\# S^2\times S^2$.
Recall that the involution
$\tilde{\sigma}$ was constructed as a covering involution associated to a double branched covering of $D^4$ along a surface in $D^4$.
In particular, the fixed-point set of $\tilde{\sigma}$ is non-empty and of codimension-2.
Let $\iota : S^2 \times S^2 \to S^2 \times S^2$ be an orientation-preserving involution defined by $\iota(x,y)=(y,x)$ for $(x,y) \in S^2 \times S^2$.
Then $\iota$ acts trivially on $H^+(S^2 \times S^2)$, and the fixed-point set of $\iota$ is non-empty and of codimension-2.
Choosing points in the fixed-point sets of $\tilde{\sigma}$ and $\iota$, we may form equivariant connected sum $\tilde{\sigma}\# \iota : M \# S^2 \times S^2 \to M \# S^2 \times S^2$.
Again, fix a diffeomorphism $M \cong M \cup (Y \times [0,1])$, and let $g : Z \to Z$ be the diffeomorphism defined by extending $\tilde{\sigma} \cup \iota \cup (-T_Y)$ by the identity of $Z \setminus M$.
Then $g^2$ is the Dehn twist along $\Sigma(2,3,7)$, and $g$ acts on $H^+(Z)$ as $\diag(1,1,-1,-1)$, so the above proof for the case that $S^2 \times S^2$ is attached inside $\mathop{\mathrm{Int}}N$ works as well by replacing $f$ with $g$.

The case for $M=M(2,3,11)$ is almost identical: we may just use the involution $\tilde{\sigma} = \tilde{\sigma}_0 : M \to M$ given in \cref{lem: equiv bound2} instead of \cref{lem: equiv bound}.
This completes the proof.
\end{proof}

Kronheimer and Mrowka \cite{KM20} proved that the Dehn twist on $K3\# K3$ along $S^3$ in the neck is not isotopic to the identity, and
Lin~\cite{JL20} proved that this is not isotopic to the identity even on $K3\# K3 \# S^2 \times S^2$.
As a consequence, the boundary Dehn twist of $\mathring{K3} \# S^2 \times S^2$, the one stabilization of the punctured $K3$, is not relatively isotopic to the identity, which combined with the Orson--Powell theorem (\cref{thm: OrsonPowell}) implies that the boundary Dehn twist on $\mathring{K3}$ is relatively exotic even after one stabilization.
We remark that we can give an alternative proof of this part of Lin's result:

\begin{thm}[{Lin \cite{JL20}}]
\label{thm Lin}
The boundary Dehn twist of $\mathring{K3} \# S^2 \times S^2$ is not isotopic to the identity through $\Diff(\mathring{K3} \# S^2 \times S^2, \del)$.
\end{thm}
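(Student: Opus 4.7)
The plan is to adapt the strategy from the proof of \cref{thm: intro stabilization in K3} to detect the boundary Dehn twist of $\mathring{K3}\# S^2\times S^2$, using the involution $\tilde{\sigma}_0: M(2,3,7) \to M(2,3,7)$ from \cref{lem: equiv bound} together with the swap involution $\iota$ on $S^2\times S^2$. Set $Z = \mathring{K3}\# S^2\times S^2$, and form it by removing a $4$-ball $B^4\subset M(2,3,7)\subset K3$ whose center lies on the codimension-$2$ fixed locus of $\tilde{\sigma}_0$, then attaching $S^2\times S^2$ via equivariant connect sum at another fixed point. The key geometric observation is that, in linear coordinates near the fixed point, $\tilde{\sigma}_0$ restricts to $\del B^4 = S^3$ as the orthogonal involution $\diag(-1,-1,1,1)$, which is the midpoint of the non-trivial loop in $SO(4)\subset\Diff(S^3)$ representing the generator of $\pi_1(\Diff(S^3))\cong\Z/2$ that defines the boundary Dehn twist $t_{S^3}$ of $Z$.

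I would then construct $f \in \Diff(Z,\del)$ by combining: (a) a half-rotation interpolation $R_{\pi s/\epsilon}$ on an outer collar $S^3\times[0,\epsilon]$ of $\del Z$, starting at $\id$ on $\del Z$ and ending at $\diag(-1,-1,1,1)$ at its inner boundary; (b) the restriction of $\tilde{\sigma}_0\#\iota$ to $(\mathring{M}\setminus\text{collar})\#S^2\times S^2$; (c) the diffeomorphism $-T_Y$ of \eqref{eq: -T} on a collar of $\Sigma(2,3,7)$ inside $N:=K3\setminus\Int M(2,3,7)$, to match $\sigma$ at $\Sigma(2,3,7)$; and (d) the identity on the rest of $N$. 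A squaring computation in the spirit of \cref{thm: intro stabilization in K3} shows that $f^2$ is isotopic rel boundary to the product $t_{S^3}\cdot t_\Sigma$, where $t_\Sigma$ is the Dehn twist along $\Sigma(2,3,7)\subset Z$. The monodromy $f$ acts on $H^+(Z)\cong\R^4$ as $\diag(-1,-1,+1,+1)$, since $\tilde{\sigma}_0^\ast = -I$ on $H^+(M(2,3,7))=\R^2$ by \cref{lem: equiv bound}, $\iota^\ast=+1$ on $H^+(S^2\times S^2)=\R$, and $f$ induces the identity on the $N$-summand $H^+(N)=\R$; hence $b^+(Z)-\dim H^+(Z)^f=2$. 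Moreover $f$ preserves the unique spin structure $\fraks$ on $Z$.

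Assume for contradiction that $t_{S^3}$ is smoothly isotopic to $\id$ rel boundary. Combined with the key identification $t_{S^3}\simeq t_\Sigma$ in $\pi_0(\Diff(Z,\del))$ (discussed below), this forces $t_\Sigma\simeq\id$ rel boundary as well, and thus $f^2\simeq\id$ rel boundary. Then \cref{lem: RP2 family constrcution spin} produces a family of spin $4$-manifolds with fiber $(Z,\fraks)$ parametrized by $\RP^2$. Since $b^+(Z)=4$ and $b^+(Z)-\dim H^+(Z)^f=2$, part (iii) of \cref{thm: rel RP2 spin pres} applies and yields $-\sigma(Z)/8 \leq \alpha(S^3) = 0$. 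But $\sigma(Z)=-16$, so $-\sigma(Z)/8=2$, the desired contradiction.

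The main obstacle is the identification $t_{S^3}\simeq t_\Sigma$ in $\pi_0(\Diff(Z,\del))$, which is needed to transport the hypothesis $t_{S^3}\simeq\id$ to $f^2\simeq\id$. The plan for this step is to lift the Seifert loop $\gamma_S\in\pi_1(\Diff(\Sigma(2,3,7)))$ to a loop in $\Diff(M(2,3,7))$ based at $\id$ whose restriction to any small 3-sphere around a fixed point of the natural (partial) circle symmetry of the Milnor fibration realizes the generator of $\pi_1(\Diff(S^3))\cong\Z/2$; the parity computation with weights $(21,14,6)$ ensures the restricted class is non-trivial. Such a lift, supported in the cobordism $\mathring{M} = M(2,3,7)\setminus B^4 \subset Z$ between $\Sigma(2,3,7)$ and the new $S^3$-boundary, yields an explicit isotopy rel $\del Z$ from $t_\Sigma$ to $t_{S^3}$ and closes the argument.
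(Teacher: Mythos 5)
Your gauge-theoretic endgame (the $\RP^2$ family, case (iii) of \cref{thm: rel RP2 spin pres}, and $-\sigma(Z)/8=2>\alpha(S^3)=0$) is fine, and the local computation that the square of your $f$ is $t_{S^3}\cdot t_\Sigma$ is plausible. The fatal problem is the step you yourself flag as the main obstacle: the identification $t_{S^3}\simeq t_\Sigma$ in $\pi_0(\Diff(Z,\del))$. Your proposed proof of it --- a loop in $\Diff(M(2,3,7)\setminus B^4)$ based at the identity restricting to the Seifert loop $\gamma_S$ on $\Sigma(2,3,7)$ and to the generator of $\pi_1(\Diff(S^3))\cong\Z/2$ on the small sphere --- could be capped off with the linear rotation loop on $B^4$, producing a smooth extension of $\gamma_S$ to $M(2,3,7)$ in the sense of \cref{def loop ext}. \cref{thm pi 1 Milnor necleus}(i) says precisely that no such extension exists, even after one stabilization. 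The ``partial circle symmetry of the Milnor fibration'' does not help: the weighted circle action on $\C^3$ rotates the value of $z_1^2+z_2^3+z_3^7$ and so does not preserve the Milnor fiber; only the time-$\pi$ involution $\tilde{\sigma}_0$ survives. Worse, granting your computation $f^2\simeq t_{S^3}\cdot t_\Sigma$, the identification $t_{S^3}\simeq t_\Sigma$ would force $f^2\simeq t_{S^3}^2\simeq\id$ \emph{unconditionally} (the generator of $\pi_1(\Diff(S^3))\cong\Z/2$ squares to the trivial loop), and your own $\RP^2$ argument would then yield $2\leq 0$ with no hypothesis left to blame; so the identification is in fact false, not merely unproven. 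What your construction genuinely shows is $t_{S^3}\cdot t_\Sigma\not\simeq\id$ rel boundary, which gives no information about $t_{S^3}$ alone.

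The paper's proof avoids the Milnor fiber entirely. It takes the covering involution $\iota_1$ of $K3$ viewed as a double branched cover of $\CP^2$, which acts as $\diag(-1,-1,1)$ on $H^+(K3)$, forms the equivariant connected sum with an involution of $S^2\times S^2$, and isotopes the result to fix a $4$-disk pointwise to obtain $f\in\Diff(Z,\del)$. Giansiracusa's theorem \cite[Corollary~2.5]{Gian08} then supplies exactly the dichotomy your argument lacks: since $f^2$ is isotopic to the identity after forgetting the disk, $[f^2]\in\pi_0(\Diff(Z,\del))$ is either trivial or the boundary Dehn twist, and the family constraint from \cref{thm: rel RP2 spin pres} rules out the first alternative. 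To repair your approach you would need a substitute for your step identifying the two Dehn twists, playing the role of Giansiracusa's dichotomy; as written, the argument cannot be completed along the lines you propose.
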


\begin{proof}
Let $\iota_1 : K3 \to K3$ be a smooth involution that acts on $H^+(K3)$ as $\mathrm{diag}(-1,-1,1)$ with codimention-2 non-empty fixed-point set.
An example of such $\iota_1$ is obtained as the covering involution of $K3$ regarded as a double branched cover of $\CP^2$ (see such as \cite[Corollary 7.3.25]{GS99}).
Take a smooth orientation-preserving involution  $\iota_2 : S^2 \times S^2 \to S^2 \times S^2$ with codimension-2 fixed-point set, such as the product of two copies of reflection of $S^2$ along an equator.
From $\iota_1, \iota_2$, we may form an equivariant connected sum along fixed points to get a smooth involution $\iota = \iota_1\#\iota_2 : K3 \# S^2 \times S^2 \to K3 \# S^2 \times S^2$.

Set $X=\mathring{K3}\# S^2 \times S^2$.
Let $f \in \Diff(X,\del)$ be a diffeomorphism which is obtained by deforming $\iota$ by isotopy to fix a 4-disk in $K3$ pointwise.
Then, through $\Diff(X, \del)$,  $f^2$ is isotopic to either the identity or the boundary Dehn twist of $X$ by \cite[Corollary~2.5]{Gian08}.
However, (iii) of \cref{thm: rel RP2 spin pres} implies that $f^2$ is not isotopic to the identity through $\Diff(X, \del)$.
Hence the boundary Dehn twist is isotopic to $f^2$ through $\Diff(X, \del)$, which is not isotopic to the identity through $\Diff(X, \del)$. 
\end{proof}

\begin{rem}\label{one stabilization 279}
Let $X=\Sigma(S^\#)$ be the double-branched covering space used in \cref{subsection 279}. 
By a similar argument and applying \cref{thm: rel RP2 spin pres} to $X \# S^2\times S^2$, if the Dehn twist $t_X$ along $\Sigma(2,7,9)$ is smoothly isotopic to the identity rel boundary, we have 
\[
2=-\frac{1}{8}\sigma (\Sigma(S^\#) \# S^2\times S^2) \leq \beta (\Sigma(2,7,9)).
\]
On the other hand, \cite[Table in page 248]{Sto20} claims $\beta (\Sigma(2,7,9))=0$ which is a contradiction. Thus, one stabilization is not enough to kill the exotic Dehn twist of $X$ along $\Sigma(2,7,9)$.

\end{rem}

\subsection{Candidates of exotic surfaces from the Dehn twists}\label{exotic sur}
In general, if we have exotic diffeomorphisms, then as the images of surfaces by such diffeomorphisms, one can provide candidates of exotic surfaces.
In certain situations, these indeed give exotic surfaces whose complements are diffeomorphic. See \cite{Ba20, LM21, IKMT22} for examples. 
From exotic Dehn twists, we give many candidates of exotic surfaces from the Dehn twists. We also relate this story to the Smale conjecture. 
Given smoothly embedded surfaces $S, S'$ in a 4-manifold $X$ with boundary such that $\del S, \del S' \subset \del X$, we say that $S$ and $S'$ are {\it relatively exotic} they are topologically isotopic rel boundary but not smoothly isotopic rel boundary.

\begin{prop}
\label{prop exotic surface}
At least one of the following (i), (ii) is true: 
\begin{itemize}
\item[(i)] The homomorphism 
    \[
    \pi_0 (\operatorname{Diff}(D^4, \partial )) \to \pi_0 (\operatorname{Homeo}(D^4, \partial )) 
    \]
is not injective.
\item [(ii)] Let $Y$ be a Seifert fibered homology 3-sphere.
Let $X$ be a geometrically simply-connected 4-manifold $X$ bounded by $Y$, i.e. $-X$ has a handle decomposition consisting only one $4$-handle and $2$-handles attached to $-Y$. 
Let $S$ be the union of all $2$-handle cores of a handle decomposition of $X$. 
Then, if the boundary Dehn twist $t_X$ on $X$ is relatively exotic, the surfaces $S$ and $t_X(S)$ are relatively exotic. 
\end{itemize}
\end{prop}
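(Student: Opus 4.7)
The plan is to prove the dichotomy by contrapositive. Assume that the forgetful map $\pi_0(\Diff(D^4,\partial)) \to \pi_0(\Homeo(D^4,\partial))$ is injective and that the boundary Dehn twist $t_X$ is relatively exotic, yet $S$ and $t_X(S)$ are smoothly isotopic rel $\partial X$. I will derive a contradiction by showing $t_X$ is smoothly isotopic to $\id_X$ rel $\partial X$. Note that $S$ and $t_X(S)$ are automatically topologically isotopic rel $\partial X$: applying the topological ambient isotopy from $\id_X$ to $t_X$ provided by the Orson--Powell theorem (\cref{thm: OrsonPowell}) to $S$ gives such an isotopy, so the relevant content of ``$S$ and $t_X(S)$ not relatively exotic'' is precisely smooth isotopy.

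Fix a smooth ambient isotopy $\{h_t\}_{t \in [0,1]}$ of $X$ with $h_0 = \id_X$, $h_1(S) = t_X(S)$, and $h_t|_{\partial X} = \id$. Set $g := h_1^{-1} \circ t_X \in \Diff(X,\partial)$; the path $t \mapsto h_t^{-1} \circ t_X$ is a smooth isotopy from $t_X$ to $g$ through $\Diff(X,\partial)$, so it suffices to show $g$ is smoothly isotopic to $\id_X$ through $\Diff(X,\partial)$. Continuity of $h_t$ from $h_0 = \id$ together with the fact that $t_X$ preserves the component-labeling of $S$ guarantees $g$ fixes each connected component $D_i$ of $S$ setwise. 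Since $g|_{D_i} \in \Diff(D_i,\partial)$ and $\pi_0(\Diff(D^2,\partial)) = 0$, the isotopy extension theorem lets us further isotope $g$ through $\Diff(X,\partial)$ so that $g|_S = \id$. A standard tubular-neighborhood argument (using triviality of the normal bundle of the disjoint union of disks $S$ and connectedness of $GL^+(2,\mathbb{R})$) then lets us further arrange $g|_{\nu(S)} = \id$ on a tubular neighborhood $\nu(S)$ of $S$.

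The key geometric input is that $X \setminus \nu(S)$ is diffeomorphic to $D^4$. Using the handle decomposition $X = D^4 \cup H_1 \cup \cdots \cup H_n$ dual to the one given by the geometric simple-connectivity hypothesis, $S$ is isotopic (rel $\partial X$) to the union of the cocore disks $\{0\} \times D^2 \subset H_i = D^2 \times D^2$, whose boundaries lie on the free boundaries $D^2 \times \partial D^2 \subset \partial X$. Removing a tubular neighborhood of each cocore leaves the shell $(D^2 \setminus \mathring{B}_\epsilon) \times D^2 \cong \partial D^2 \times D^2 \times [0,1]$ in each handle, which is just a collar on the attaching region; hence $X \setminus \nu(S)$ is the 0-handle $D^4$ with collars attached along parts of its boundary, which smooths to a single 4-ball. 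Under this identification, $g|_{X \setminus \nu(S)}$ lies in $\Diff(D^4,\partial)$. By the Alexander trick, $\pi_0(\Homeo(D^4,\partial)) = 0$, so the assumed injectivity forces $g|_{X \setminus \nu(S)}$ to be smoothly isotopic to $\id_{D^4}$ rel boundary. Extending this smooth isotopy by the identity on $\nu(S)$ yields a smooth isotopy from $g$ to $\id_X$ through $\Diff(X,\partial)$, giving the desired contradiction. The main obstacle is the clean geometric identification $X \setminus \nu(S) \cong D^4$, which requires care with the handle structure and the passage between the two dual handle decompositions of $X$; the other steps follow the template of \cite{IKMT22} for converting exotic diffeomorphism phenomena into exotic surface phenomena.
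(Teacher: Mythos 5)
Your proof is correct and follows essentially the same route as the paper's: assume the smooth isotopy $h_t$ exists, pass to $g = h_1^{-1}\circ t_X$ (which preserves $S$ setwise and is smoothly isotopic to $t_X$ rel $\partial$), reduce $g$ rel $\partial$ to a diffeomorphism supported in a $4$-ball, and then invoke the Alexander trick together with the assumed injectivity of $\pi_0(\Diff(D^4,\partial)) \to \pi_0(\Homeo(D^4,\partial))$.

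The one genuine difference is organizational rather than mathematical. Where the paper achieves the reduction to a ball-supported diffeomorphism in a single step by citing Lemma~3.1 of \cite{LM21}, you unpack that step explicitly: first isotoping $g$ to fix $S$ pointwise via $\pi_0(\Diff(D^2,\partial))=0$, then fixing a tubular neighborhood $\nu(S)$ via triviality of the normal bundle and connectivity of $GL^+(2,\R)$, and finally observing that $X\setminus\nu(S)\cong D^4$ by the handle-theoretic fact that deleting neighborhoods of the $2$-handle cocores of the dual decomposition $X = D^4 \cup (\text{$2$-handles})$ leaves only collars on $\partial D^4$. This self-contained treatment is a fair substitute for the cited lemma and makes the role of geometric simple-connectivity more transparent; the trade-off is that it is longer and duplicates known work. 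One small point of care that you handle correctly but could state more crisply: the reason $g$ preserves each component $D_i$ of $S$ is that $\partial D_i \subset \partial X$ is fixed by every $h_t$ and by $t_X$, so component labels are tracked continuously along the isotopy.
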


%\begin{prop}
%\label{prop exotic surface}
%For a geometrically simply-connected 4-manifold $X$ bounded by $Y$, i.e. $-X$ has a handle decomposition consisting only one $4$-handle and $2$-handles attached to $-Y$. 
%Let $S$ be the union of all $2$-handle cores of a handle decomposition of $X$. Suppose the Dehn twist $t_X$ along $Y$ on $X$ is exotic. 

%Then one of the following is true: 
%\begin{itemize}
 %   \item The surfaces $S$ and $t_X(S)$ are relatively exotic. 
 %   \item The homomorphism 
  %  \[
  %  \pi_0 (\operatorname{Diff}(D^4, \partial )) \to \pi_0 (\operatorname{Homeo}(D^4, \partial )) 
    %\]
    %is not injective. 
%\end{itemize}
%\end{prop}

\begin{proof}
Note that $-X$ can be decomposed into 
\[
[0,1]\times (-Y) \cup \text{(2-handles)} \cup \text{(a 4-handle)}.
\]
Since $t_X$ is topologically isotopic to the identity relative to the boundary, we know that $S$ and $t_X(S)$ are topologically isotopic relative to the boundary. Suppose $S$ and $t_X(S)$ are smoothly isotopic relative to the boundary.
Then, by the isotopy extension lemma, one can construct a relative smooth isotopy from the identity to some diffeomorphism $f$ on $-X$ such that 
$S = f( t_X(S))$. 
Then, by applying \cite[Lemma 3.1]{LM21} to $f \circ t_X$, we can further make an isotopy rel boundary between $f \circ t_X$ and an orientation preserving diffeomorphism $g: X  \to X$ satisfying $g|_{X \setminus \operatorname{Int} D^4}= \id$. Thus, $g|_{D^4}$ gives an element in $\pi_0 (\operatorname{Diff}(D^4, \partial ))$. Suppose $\pi_0 (\operatorname{Diff}(D^4, \partial )) \to \pi_0 (\operatorname{Homeo}(D^4, \partial )) $ is injective. Then, we see $g|_{D^4}$ is smoothly and relatively isotopic to $\id$.
Thus, we conclude that $t_X$ is relatively and smoothly isotopic to $\id$, which contradicts with the first condition. 
\end{proof}
Since we know that the boundary Dehn twist $t_X$ is exotic for any simply-connected positive-definite 4-manifold $X$ bounded by $\Sigma(2,3,6n+7)$, combined with \cref{prop exotic surface}, we have the following. 

\begin{cor}
At least one of the following (i), (ii) is true:
\begin{itemize}
\item[(i)] The homomorphism 
\[
    \pi_0 (\operatorname{Diff}(D^4, \partial )) \to \pi_0 (\operatorname{Homeo}(D^4, \partial )) 
\]
is not injective.
\item [(ii)] For any geometrically simply-connected positive-definite 4-manifold $X$ bounded by $\Sigma(2,3,6n+7)$ for $n \geq 0$, the union of the 2-handle cores $S$ and $t_X(S)$ are relatively exotic
\end{itemize}
\end{cor}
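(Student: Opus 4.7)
The plan is to derive this dichotomy as an immediate application of \cref{prop exotic surface} to the class of 4-manifolds supplied by \cref{main thm1: Dehn twist on definite}. The only thing to verify is that the hypotheses of \cref{prop exotic surface} are met for every geometrically simply-connected positive-definite $X$ bounded by $Y=\Sigma(2,3,6n+7)$, and then to read off the two alternatives.

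First I would note that $\Sigma(2,3,6n+7)$ is a Seifert fibered homology 3-sphere, so it qualifies as the boundary $Y$ appearing in the statement of \cref{prop exotic surface}. For any geometrically simply-connected positive-definite 4-manifold $X$ bounded by $\Sigma(2,3,6n+7)$, geometric simple-connectivity in particular implies that $X$ is simply-connected, so \cref{main thm1: Dehn twist on definite} applies and guarantees that the boundary Dehn twist $t_X : X \to X$ is relatively exotic. This is precisely the hypothesis required by part (ii) of \cref{prop exotic surface}.

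Next I would invoke \cref{prop exotic surface} directly: it asserts that either (i) the forgetful map $\pi_0(\Diff(D^4,\partial)) \to \pi_0(\Homeo(D^4,\partial))$ fails to be injective, or (ii) the cores $S$ and their image $t_X(S)$ under a relatively exotic boundary Dehn twist are relatively exotic surfaces. Since the premise of the (ii)-alternative of \cref{prop exotic surface} is now confirmed for every geometrically simply-connected positive-definite filling $X$ of $\Sigma(2,3,6n+7)$, we obtain the dichotomy uniformly in $X$ and $n$: either case (i) holds once and for all, or case (ii) holds for every such $X$.

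There is no genuine obstacle here beyond checking that \cref{main thm1: Dehn twist on definite} and \cref{prop exotic surface} dovetail: the former produces the exotic Dehn twist from positive-definiteness, the latter converts an exotic boundary Dehn twist on a geometrically simply-connected filling into exotic 2-handle cores modulo the $\pi_0(\Diff(D^4,\partial))$ ambiguity. The one subtlety worth writing out carefully in the final version is the verification, already carried out inside the proof of \cref{prop exotic surface}, that a smooth isotopy between $S$ and $t_X(S)$ rel boundary can, via the isotopy extension lemma and \cite[Lemma~3.1]{LM21}, be upgraded to a diffeomorphism supported in a 4-ball, at which point the kernel of $\pi_0(\Diff(D^4,\partial)) \to \pi_0(\Homeo(D^4,\partial))$ is the only remaining obstruction to smoothly trivializing $t_X$.
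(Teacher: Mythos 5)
Your proposal is correct and is exactly the paper's argument: the corollary is obtained by feeding \cref{main thm1: Dehn twist on definite} (applicable since geometric simple-connectivity implies simple-connectivity) into alternative (ii) of \cref{prop exotic surface}. No further comment is needed.
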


%\begin{cor}
%For any geometrically simply-connected positive-definite 4-manifold $X$ bounded by $\Sigma(2,3,12n-5)$ for $n \geq 1$, one of the followings is true: 
%    \begin{itemize}
%        \item The union of the 2-handle cores $S$ and $t_X(S)$ are relatively exotic.
 %       \item The homomorphism 
 %   \[
 %   \pi_0 (\operatorname{Diff}(D^4, \partial )) \to \pi_0 (\operatorname{Homeo}(D^4, \partial )) 
 %   \]
 %   is not injective. 
  %  \end{itemize}
%\end{cor}

\section{Loops of diffeomorphisms}
\label{section Loops of diffeomorphisms}

Now we rephrase our results on Dehn twists in a language of fundamental groups of automorphism groups, following Orson and Powell~\cite{OrsonPowell2022}.

Let $X$ be a smooth 4-manifold with an orientable compact boundary $Y$.
Then we have a fiber sequence
\[
\Diff(X,\del) \stackrel{\iota}{\hookrightarrow} \Diff(X) \stackrel{r}{\to} \Diff(Y).
\]
Here $\iota$ is the inclusion and $r$ is the restriction.
Similarly, we have a fibration 
\[
\Homeo(X,\del) \stackrel{\iota}{\hookrightarrow} \Homeo(X) \stackrel{r}{\to} \Homeo(Y).
\]
See \cite[Proposition A.2]{OrsonPowell2022} for the proof that these are the fibrations (the proof is given in the topological category, but it is easy to modify the proof to get the smooth version.)
Together with the inclusion maps $i : \Diff(X) \hookrightarrow
\Homeo(X)$ et cetera from diffeomorphism groups to homeomorphism groups, we obtain from the above fibrations a commutative diagram
\begin{equation}
\label{eq: big commutative diagram}
\begin{tikzcd}
  \cdots \arrow[r] & \pi_1(\Diff(X)) \arrow[r, "r_\ast"] \arrow[d, "i_{\ast}"] & \pi_1(\Diff(Y)) \arrow[r, "\del_{C^\infty}"] \arrow[d, "\cong"] & \pi_0(\Diff(X,\del)) \arrow[r] \arrow[d] & \cdots \\
  \cdots \arrow[r] & \pi_{1}(\Homeo(X)) \arrow[r, "r_\ast"] & \pi_{1}(\Homeo(Y)) \arrow[r, "\del_{C^0}"] & \pi_{0}(\Homeo(X,\del)) \arrow[r] & \cdots.
\end{tikzcd}    
\end{equation}

Here $\del_{C^\infty}$ and $\del_{C^0}$ are connecting homomorphisms.
Note that the induced map $\pi_1(\Diff(Y)) \to \pi_1(\Homeo(Y))$ is isomorphic since $\Diff(Y) \hookrightarrow \Homeo(Y)$ is a weak homotopy equivalence (see, such as, \cite{Hat80} together with \cite{Ce68,Ha83}).

\begin{lem}
\label{lem: pi 1 comparison and extension}
Let $X$ be a smooth 4-manifold with an orientable compact boundary $Y$.
If there exists an element $\gamma \in \pi_1(\Diff(Y)) (\cong \pi_1(\Homeo(Y)))$ such that
\begin{align*}
&\gamma \in
\im(r_\ast : \pi_1(\Homeo(X)) \to \pi_1(\Homeo(Y))),\\
&\gamma \notin \im(r_\ast : \pi_1(\Diff(X)) \to \pi_1(\Diff(Y))),
\end{align*}
then $i_\ast : \pi_1(\Diff(X)) \to \pi_1(\Homeo(X))$ is not surjective.
\end{lem}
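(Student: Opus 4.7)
The proof will be a short diagram chase on the commutative diagram \eqref{eq: big commutative diagram}, exploiting the fact that the vertical map $\pi_1(\Diff(Y)) \to \pi_1(\Homeo(Y))$ is an isomorphism. The plan is to argue by contradiction: assume $i_\ast : \pi_1(\Diff(X)) \to \pi_1(\Homeo(X))$ is surjective and produce an element of $\pi_1(\Diff(X))$ whose image under $r_\ast$ equals $\gamma$, contradicting the second bullet in the hypothesis.

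Concretely, I would first use the hypothesis $\gamma \in \im(r_\ast : \pi_1(\Homeo(X)) \to \pi_1(\Homeo(Y)))$ to pick a preimage $\tilde\gamma \in \pi_1(\Homeo(X))$, where here we identify $\gamma$ with its image in $\pi_1(\Homeo(Y))$ under the isomorphism $\pi_1(\Diff(Y)) \cong \pi_1(\Homeo(Y))$. Next, assuming $i_\ast$ were surjective, I would lift $\tilde\gamma$ to some $\sigma \in \pi_1(\Diff(X))$ with $i_\ast(\sigma) = \tilde\gamma$. Commutativity of the left square of \eqref{eq: big commutative diagram} then gives
\[
i_\ast^Y \bigl( r_\ast(\sigma) \bigr) \;=\; r_\ast\bigl( i_\ast(\sigma) \bigr) \;=\; r_\ast(\tilde\gamma) \;=\; \gamma,
\]
where $i_\ast^Y : \pi_1(\Diff(Y)) \to \pi_1(\Homeo(Y))$ denotes the vertical isomorphism.

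Since $i_\ast^Y$ is an isomorphism, this forces $r_\ast(\sigma) = \gamma$ in $\pi_1(\Diff(Y))$, contradicting the hypothesis that $\gamma \notin \im(r_\ast : \pi_1(\Diff(X)) \to \pi_1(\Diff(Y)))$. This completes the plan. There is no real obstacle here: once the diagram \eqref{eq: big commutative diagram} has been set up, the argument is a two-step diagram chase whose only nontrivial input is the weak homotopy equivalence $\Diff(Y) \hookrightarrow \Homeo(Y)$ on the 3-manifold boundary, which has already been invoked. The entire proof should fit in a few lines.
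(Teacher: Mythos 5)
Your proof is correct and is essentially the same argument as the paper's: both pick a preimage $\tilde\gamma\in\pi_1(\Homeo(X))$ of $\gamma$ and use the commutativity of the left square of \eqref{eq: big commutative diagram} together with the vertical isomorphism $\pi_1(\Diff(Y))\cong\pi_1(\Homeo(Y))$ to show $\tilde\gamma$ cannot lie in the image of $i_\ast$. Your version merely spells out the diagram chase that the paper leaves implicit.
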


\begin{proof}
Let $\tilde{\gamma} \in \pi_1(\Homeo(X))$ be a loop that is sent to $\gamma$ under $r_\ast : \pi_1(\Homeo(X)) \to \pi_1(\Homeo(Y))$.
One sees that $\tilde{\gamma}$ does not lies in the image of $i_\ast : \pi_1(\Diff(X)) \to \pi_1(\Homeo(X))$, using that $\gamma$ does not lies in the image of $r_\ast : \pi_1(\Diff(X)) \to \pi_1(\Diff(Y))$ and the commutativity of the diagram \eqref{eq: big commutative diagram}.
\end{proof}

Now let us consider Seifert fibered 3-manifolds.
As in \cref{Non-extendable loops of diffeomorphisms},
for a Seifert fibered 3-manifold $Y(\neq S^3)$,
let $\gamma_S \in \pi_1(\Diff(Y))$ denote the loop that emerges from the standard circle action on $Y$.
We call $\gamma_S$ the {\it Seifert loop} for $Y$, which is non-trivial in $\pi_1(\Diff(Y)) \cong \pi_1(\Homeo(Y))$ \cite[Proposition~8.8]{OrsonPowell2022}.

\begin{lem}
\label{lem: exact}
Let $X$ be a smooth 4-manifold with a Seifert fibered 3-manifold boundary $Y$.
Then the Seifert loop $\gamma_S \in \pi_1(\Diff(Y))$ lies in the image of $r_\ast : \pi_1(\Diff(X)) \to \pi_1(\Diff(Y))$ if and only if $[t_X] \in \pi_0(\Diff(X,\del))$ is trivial.
An analogous statement holds also in the topological category.
\end{lem}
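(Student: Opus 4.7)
The plan is to deduce the statement directly from the long exact sequence of homotopy groups
\begin{equation*}
\pi_1(\Diff(X)) \xrightarrow{r_\ast} \pi_1(\Diff(Y)) \xrightarrow{\del_{C^\infty}} \pi_0(\Diff(X,\del))
\end{equation*}
associated to the fibration $\Diff(X,\del) \hookrightarrow \Diff(X) \xrightarrow{r} \Diff(Y)$ recalled above, together with its topological counterpart. By exactness, $\gamma_S$ lies in $\im(r_\ast)$ if and only if $\del_{C^\infty}(\gamma_S)=0$ in $\pi_0(\Diff(X,\del))$, so the whole content of the lemma is the identification $\del_{C^\infty}(\gamma_S)=[t_X]$ (up to inversion, which is harmless).

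To carry this out, I would first unwind the definition of the connecting homomorphism. Given a based loop $\phi : [0,1] \to \Diff(Y)$ with $\phi(0)=\phi(1)=\id_Y$, the element $\del_{C^\infty}([\phi])$ is represented by the endpoint $\tilde\phi(1)$ of any lift $\tilde\phi:[0,1] \to \Diff(X)$ with $\tilde\phi(0)=\id_X$ and $r\circ\tilde\phi = \phi$. Fix a collar neighborhood $c : Y\times[0,1] \hookrightarrow X$ with $c(y,0)\in\del X$, and choose a smooth function $\chi:[0,1]\to[0,1]$ with $\chi(0)=1$ and $\chi(t)=0$ for $t$ near $1$. Then the formula
\begin{equation*}
\tilde\phi(s)(x) = \begin{cases} c\bigl(\phi(s\chi(t))(y),\,t\bigr) & \text{if } x=c(y,t), \\ x & \text{otherwise,} \end{cases}
\end{equation*}
produces exactly such a lift. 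By construction $\tilde\phi(1)|_{\del X}=\phi(1)=\id$, so $\tilde\phi(1)\in\Diff(X,\del)$, and up to reparametrization of the $[0,1]$-factor the diffeomorphism $\tilde\phi(1)$ coincides with the implanted twist $(y,t)\mapsto c(\phi(\chi(t))(y),t)$, which is isotopic rel $\del$ to the Dehn twist $t_X$ associated to $\phi$ as defined in \cref{Intro: Exotic Dehn twists}.

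Applying the above to $\phi=\gamma_S$, the canonical loop of diffeomorphisms coming from the standard $S^1$-action on the Seifert fibered manifold $Y$, the resulting $\tilde\phi(1)$ is by definition (a reparametrization of) the boundary Dehn twist $t_X$, and therefore $\del_{C^\infty}(\gamma_S)=[t_X]$ in $\pi_0(\Diff(X,\del))$. Combined with exactness, this proves the smooth statement. The topological version is proven verbatim by replacing $\Diff$ with $\Homeo$ throughout, using the fibration $\Homeo(X,\del)\hookrightarrow\Homeo(X)\to\Homeo(Y)$ from \cite[Proposition A.2]{OrsonPowell2022}; the identification $\pi_1(\Diff(Y))\cong\pi_1(\Homeo(Y))$ of diagram \eqref{eq: big commutative diagram} then shows that the same class $\gamma_S$ governs extendability in either category. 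The only non-routine point is constructing the lift $\tilde\phi$, which is straightforward once a collar is fixed; no genuine obstacle arises.
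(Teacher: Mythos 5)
Your proposal is correct and follows the same route as the paper: the paper's proof simply asserts that $\del_{C^\infty}(\gamma_S)=[t_X]$ (and likewise for $\del_{C^0}$) and concludes by exactness of the rows in the diagram \eqref{eq: big commutative diagram}, whereas you additionally spell out the explicit collar-supported lift verifying that identification. The only cosmetic difference is that your lift produces the twist for the reversed loop, which, as you note, is harmless for the if-and-only-if statement.
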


\begin{proof}
It is straightforward to see that $\del_{C^\infty}(\gamma_S)$ coincides with the relative mapping class $[t_X] \in \pi_0(\Diff(X,\del))$ of the boundary Dehn twist $t_X$.
Similarly, $\del_{C^0}(\gamma_S)$ coincides with the class $[t_X] \in \pi_0(\Homeo(X,Y))$, considered in the topological category.
Thus the assertion follows from the exactness of the rows in the diagram \eqref{eq: big commutative diagram}.
\end{proof}

\begin{rem}
\label{rem: equiv plumbing}
An equivariant plumbing technique tells that any Seifert fibered homology 3-sphere $Y$ bounds a negative-definite complex surface $X$ (obtained as a resolution of a singularity) to which the Seifert circle action extends smoothly (\cite{Orlik-Philip}, see also \cite[Subsection~7.2]{Saveliev99}).
In particular, the Seifert loop $\gamma_S$ extends smoothly to such $X$.
By \cref{lem: exact}, this implies that the boundary Dehn twist on $X$ is smoothly isotopic to the identity rel boundary.
\end{rem}

A result by Saeki~\cite{Saeki06} immediately implies the following (see also a recent generalization by Orson and Powell
\cite[Theorem~B]{OrsonPowell2022}):

\begin{thm}[{Saeki
\cite{Saeki06}}]
\label{thm Dehn twist stably trivial}
Let $Y$ be a Seifert fibered homology 3-sphere.
Let $X$ be an oriented compact simply-connected smooth 4-manifold bounded by $Y$.
Then there exists $n\geq 0$ such that:
\begin{itemize}
\item [(i)] The boundary Dehn twist $t_{X\#n S^2 \times S^2} : X\#n S^2 \times S^2 \to X\#n S^2 \times S^2$ is smoothly isotopic to the identity through $\Diff(X,\del)$.
\item [(ii)] The Seifert loop $\gamma_S$ lies in the image of
\[
r_\ast : \pi_1(\Diff(X\#n S^2\times S^2)) \to \pi_1(\Diff(Y)).
\]
\end{itemize}
\end{thm}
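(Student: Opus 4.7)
The plan is to derive (i) directly from Saeki's stabilization theorem \cite[Theorem~3.7]{Saeki06} together with the Orson--Powell theorem (\cref{thm: OrsonPowell}), and then to deduce (ii) from (i) via \cref{lem: exact}.

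For (i), the first step is to observe that the boundary Dehn twist $t_X \in \Diff(X,\del)$ is topologically isotopic to the identity through $\Homeo(X,\del)$. This is immediate from \cref{thm: OrsonPowell}: $X$ is simply-connected with integral homology 3-sphere boundary $Y$, and a Dehn twist along $Y$ acts trivially on $H_2(X;\Z)$ and satisfies the boundary-trivialization hypotheses, so the Orson--Powell theorem applies. The second step is to upgrade this topological isotopy to a smooth pseudo-isotopy of $(X,\del)$: since $X$ is simply-connected and has homology sphere boundary, standard 4-manifold surgery results guarantee that a topological isotopy rel boundary can be realized by a smooth pseudo-isotopy rel boundary. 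Saeki's theorem \cite[Theorem~3.7]{Saeki06} (or, alternatively, Orson--Powell's extension \cite[Theorem~B]{OrsonPowell2022}) then states that a diffeomorphism of such an $X$ which is smoothly pseudo-isotopic to the identity rel boundary becomes smoothly isotopic to the identity rel boundary after sufficiently many stabilizations by $S^2 \times S^2$. Applying this to $t_X$ produces an integer $n\geq 0$ for which $t_{X\# n\, S^2\times S^2}$ is smoothly isotopic to the identity through $\Diff(X \# n\, S^2 \times S^2,\del)$, which is (i).

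For (ii), with the $n$ produced above, we invoke \cref{lem: exact} for the 4-manifold $X\# n\, S^2\times S^2$ (which has the same boundary $Y$). Part (i) says that the class $[t_{X \# n\, S^2 \times S^2}] \in \pi_0(\Diff(X \# n\, S^2 \times S^2,\del))$ is trivial, and \cref{lem: exact} then asserts precisely that the Seifert loop $\gamma_S$ lies in the image of
\[
r_\ast : \pi_1(\Diff(X \# n\, S^2 \times S^2)) \to \pi_1(\Diff(Y)),
\]
yielding (ii).

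The main obstacle is the passage from topological isotopy to smooth pseudo-isotopy, which is the non-formal input: this relies on the interplay between \cref{thm: OrsonPowell} (topological isotopy) and the surgery-theoretic ingredients underpinning Saeki's stabilization result. Once this step is justified, both (i) and (ii) follow formally from already-established results in the paper and the cited literature.
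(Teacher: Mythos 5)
Your argument reaches the right conclusion, but it takes a genuinely different route from the paper, and one intermediate step is stated more loosely than it should be. The paper's proof stays entirely in the smooth category and is purely homological: since $t_X$ acts trivially on $H_2(X;\Z)$, the variation map $\Delta_{t_X} : H_2(X;\Z)\cong H_2(X,\del X;\Z)\to H_2(X;\Z)$ of \cite[Definition 2.1]{Saeki06} is the identity, and then \cite[Theorem 2.2]{Saeki06} directly yields (i); finally (i) and (ii) are equivalent by \cref{lem: exact}. Your route instead first invokes the Orson--Powell theorem to get a \emph{topological} isotopy, then passes to a smooth pseudo-isotopy, then applies the stabilization result. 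This works --- indeed the paper itself quotes in the introduction that a relatively exotic diffeomorphism of such an $X$ becomes smoothly isotopic to the identity rel boundary after sufficiently many stabilizations (\cite[Theorem 3.7]{Saeki06}, \cite[Theorem B]{OrsonPowell2022}) --- but it is less economical: the deep Orson--Powell topological isotopy theorem is not needed here, since the hypothesis of Saeki's stabilization theorem is the homological triviality of $t_X$, which is immediate. Relatedly, your phrase ``standard 4-manifold surgery results guarantee that a topological isotopy rel boundary can be realized by a smooth pseudo-isotopy rel boundary'' is not quite the right statement: what the literature provides is that a \emph{homologically trivial} diffeomorphism of a simply-connected $X$ with homology-sphere boundary is smoothly pseudo-isotopic to the identity rel boundary (Kreck in the closed case; \cite[Theorems 2.6 and 7.3]{OrsonPowell2022} in the relative case, as recorded in \cref{rem pseudo isotopy}). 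The topological isotopy is not itself ``upgraded''; only its homological consequence is used. With that citation corrected, your deduction of (ii) from (i) via \cref{lem: exact} applied to $X\# n\,S^2\times S^2$ is exactly the paper's.
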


\begin{proof}
(i) is equivalent to (ii) by \cref{lem: exact}, and we will check (i).
Note that the boundary Dehn twist $t_X$ on $X$ trivially acts on homology.
This implies that the variation map $\Delta_{t_X} : H_2(X;\Z) \cong H_2(X, \del X;\Z) \to H_2(X;\Z)$  defined in \cite[Definition 2.1]{Saeki06} is the identity map. 
Noting this, (i) is a direct consequence of \cite[Theorem 2.2]{Saeki06}.
\end{proof}

Now we are ready to prove:

\begin{thm}
\label{main thm1 generelized: Dehn twist on definite 2}
Let $X$ be a compact oriented positive-definite smooth 4-manifold with $b_1(X)=0$ bounded by one of $\Sigma(2,3,6n+7)$ for $n \geq 0$.
Set $Y=\del X$.
Then the Seifert loop $\gamma_S \in \pi_1(\Diff(Y))$ does not extend to $X$ smoothly.
In particular:
\begin{itemize}
\item[(i)] The restriction map 
$r_\ast : \pi_1(\Diff(X)) \to \pi_1(\Diff(Y))
$
is not surjective.
\item[(ii)] The standard circle action on $Y$ does not extend to $X$ as a smooth circle action.
\end{itemize}
\end{thm}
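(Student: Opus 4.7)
The proof will be essentially a direct deduction from results already established in the paper. The plan is to combine \cref{main thm1 generelized: Dehn twist on definite} (non-triviality of the boundary Dehn twist in $\pi_0(\Diff(X,\partial))$) with \cref{lem: exact} (which identifies the connecting homomorphism image of the Seifert loop with $[t_X]$) to conclude the non-extendability of $\gamma_S$ in the smooth category.

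First, I would invoke \cref{main thm1 generelized: Dehn twist on definite}, which under the present hypotheses states that the boundary Dehn twist $t_X : X \to X$ is not smoothly isotopic to the identity through $\Diff(X,\partial)$; equivalently, $[t_X] \neq 0$ in $\pi_0(\Diff(X,\partial))$. Applying \cref{lem: exact}, which gives the equivalence between triviality of $[t_X]$ and the Seifert loop $\gamma_S$ lying in the image of $r_\ast : \pi_1(\Diff(X)) \to \pi_1(\Diff(Y))$, we conclude that $\gamma_S \notin \im(r_\ast)$, i.e.\ $\gamma_S$ does not extend smoothly to $X$. This yields the main statement and assertion (i) at once.

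For assertion (ii), the argument is by contradiction: suppose that the standard $S^1$-action on $Y$ extended to a smooth $S^1$-action on $X$. Such an extension produces a based loop $S^1 \to \Diff(X)$ whose image under the restriction map $r : \Diff(X) \to \Diff(Y)$ is precisely the based loop representing $\gamma_S$. Passing to $\pi_1$, this shows $\gamma_S \in \im(r_\ast)$, contradicting what we just proved. Hence no smooth $S^1$-extension exists.

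The main obstacle has already been overcome in \cref{main thm1 generelized: Dehn twist on definite}; the present statement requires only a translation into the language of loops via the long exact sequence of the fibration $\Diff(X,\partial) \hookrightarrow \Diff(X) \to \Diff(Y)$ encoded in diagram \eqref{eq: big commutative diagram}. No new gauge-theoretic input is required. The proof is therefore essentially a short corollary: apply \cref{main thm1 generelized: Dehn twist on definite}, then \cref{lem: exact}, then observe that a smooth circle action on $X$ extending the one on $Y$ would give a smooth extension of $\gamma_S$.
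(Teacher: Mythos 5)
Your proposal is correct and follows exactly the paper's own route: the result is deduced directly from \cref{main thm1 generelized: Dehn twist on definite} together with \cref{lem: exact}, with (i) and (ii) following immediately since a smooth circle-action extension would exhibit $\gamma_S$ in the image of $r_\ast$. Nothing further is needed.
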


\begin{proof}
This is a direct consequence of \cref{main thm1 generelized: Dehn twist on definite,lem: exact}.
\end{proof}

Similarly, we shall prove:

\begin{thm}
\label{thm: pi1 2}
For $n \geq 0$, the Brieskorn sphere $Y=\Sigma(2,3,6n+11)$ bounds a simply-connected oriented compact smooth 4-manifold $X$ such that neither the natural map
\[
\pi_1(\Diff(X)) \to \pi_1(\Homeo(X))
\]
nor the restriction map
\[
r_\ast : \pi_1(\Diff(X)) \to \pi_1(\Diff(Y))
\]
are surjective.   
More precisely, a non-trivial element in the cokernel of $r_\ast$ is given by the Seifert loop.
\end{thm}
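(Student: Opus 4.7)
The plan is to mimic the proof of Theorem \ref{main thm1 generelized: Dehn twist on definite 2}, but now drawing on Theorem \ref{thm: More Dehn twists} in place of Theorem \ref{main thm1 generelized: Dehn twist on definite}. Specifically, I would take $X$ to be the simply-connected compact smooth 4-manifold bounded by $Y = \Sigma(2,3,6n+11)$ provided by Theorem \ref{thm: More Dehn twists}, on which the boundary Dehn twist $t_X : X \to X$ is relatively exotic.

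First, since $t_X$ is not smoothly isotopic to the identity rel boundary, Lemma \ref{lem: exact} (applied in the smooth category) forces the Seifert loop $\gamma_S \in \pi_1(\Diff(Y))$ to lie outside the image of $r_\ast : \pi_1(\Diff(X)) \to \pi_1(\Diff(Y))$. This already gives the non-surjectivity of the restriction map $r_\ast$ and exhibits $\gamma_S$ as the claimed non-trivial element in its cokernel.

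Second, to get the non-surjectivity of $\pi_1(\Diff(X)) \to \pi_1(\Homeo(X))$, I would invoke the Orson--Powell theorem (Theorem \ref{thm: OrsonPowell}): since $X$ is simply-connected with homology sphere boundary and $t_X$ acts trivially on $H_2(X;\Z)$, it is topologically isotopic to the identity through $\Homeo(X,\del)$. By the topological version of Lemma \ref{lem: exact}, this means $\gamma_S$ does lie in the image of $r_\ast : \pi_1(\Homeo(X)) \to \pi_1(\Homeo(Y))$. Combined with the preceding paragraph, Lemma \ref{lem: pi 1 comparison and extension} then yields non-surjectivity of the comparison map $\pi_1(\Diff(X)) \to \pi_1(\Homeo(X))$.

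There is no real obstacle here: the deep content is already packaged in Theorem \ref{thm: More Dehn twists} (whose proof uses the families Seiberg--Witten constraints over $\RP^2$) and in the Orson--Powell triviality in the topological category. The present theorem is essentially a formal translation between the relative mapping class and the fundamental-group statements via the fibration exact sequence \eqref{eq: big commutative diagram}, exactly as was done for positive-definite fillings of $\Sigma(2,3,6n+7)$.
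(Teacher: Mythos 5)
Your proposal is correct and follows essentially the same route as the paper, which deduces the theorem from Theorem~\ref{thm: More Dehn twists}, Lemma~\ref{lem: exact}, and Lemma~\ref{lem: pi 1 comparison and extension} (the topological-category input from Orson--Powell that you spell out is already packaged into the "relatively exotic" conclusion of Theorem~\ref{thm: More Dehn twists}). There is no gap; the argument is a formal translation through the fibration exact sequence, exactly as the paper does for the $\Sigma(2,3,6n+7)$ case.
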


\begin{proof}[Proof of \cref{thm: pi1 1}, \cref{thm: pi1 smooth vs. top 1}, \cref{cor: pi1 non surj contractible}, \cref{thm pi 1 Milnor necleus}, \cref{thm: pi1 2}]
These facts follow from the theorems that have already been proven.
\begin{itemize}
    \item \cref{thm: pi1 1} follows from  \cref{main thm1 generelized: Dehn twist on definite 2,thm: OrsonPowell,lem: exact}.
    \item \cref{thm: pi1 smooth vs. top 1} follows from \cref{thm: pi1 1,lem: pi 1 comparison and extension}.
    \item \cref{cor: pi1 non surj contractible} follows from \cref{main thm1 generelized: Dehn twist on definite 2,thm: OrsonPowell,lem: exact}.
    \item \cref{thm pi 1 Milnor necleus} follows from \cref{thm: intro stabilization in K3,lem: pi 1 comparison and extension,lem: exact}. 
    \item \cref{thm: pi1 2} follows from \cref{thm: More Dehn twists,lem: pi 1 comparison and extension,lem: exact}. 
\end{itemize}

%(I) \cref{thm: pi1 1}, (II) \cref{thm: pi1 smooth vs. top 1},
%(III) \cref{cor: pi1 non surj contractible},
%(IV) \cref{thm pi 1 Milnor necleus}, (V) \cref{thm: pi1 2} are  direct consequences of: (I) \cref{main thm1 generelized: Dehn twist on definite 2,thm: OrsonPowell,lem: exact}, (II) \cref{thm: pi1 1,lem: pi 1 comparison and extension}, 
%(III) \cref{main thm1 generelized: Dehn twist on definite 2,thm: OrsonPowell,lem: exact}, 
%(IV) \cref{thm: intro stabilization in K3,lem: pi 1 comparison and extension,lem: exact}, (V) \cref{thm: More Dehn twists,lem: pi 1 comparison and extension,lem: exact}, respectively.
\end{proof}

Recall that the notion of strong cork \cite[Section 1.2.2]{LRS18}.
A cork $(Y,\tau)$ is called a {\it strong cork} if $\tau$ does not extend to any smooth ($\Z/2$-)homology 4-ball bounded by $Y$ as a diffeomorphism, and
$\tau$ is said to be {\it strongly non-extendable} in this case.
In a similar spirit, we define the notion of {\it strongly non-extendable loop} of diffeomorphisms.
We may treat rational homology 4-balls not only $\Z$- or $\Z/2$-homology 4-balls, so we define this notion with rational coefficient:

\begin{defn}
\label{defn: storngly non-extendable loop}
Let $Y$ be an oriented closed 3-manifold.
Suppose that $Y$ bounds a contractible compact smooth 4-manifold.
We say that a (homotopy class of) loop of  $\gamma \in \pi_1(\Diff(Y))$ is a {\it strongly non-extendable loop} of diffeomorphisms if $\gamma$ does not lie in the image of the restriction map
\[
r_\ast : \pi_1(\Diff(X)) \to \pi_1(\Diff(Y))
\]
for {\it any} smooth rational homology 4-ball $X$ bounded by $Y$.
\end{defn}

With this term, we can phrase a part of \cref{main thm1 generelized: Dehn twist on definite 2} as:

\begin{cor}
\label{strong pi1 cork}
Let $Y$ be $\Sigma(2,3,13)$ or $\Sigma(2,3,25)$.
Then the Seifert loop $\gamma_S \in \pi_1(\Diff(Y))$ is strongly non-extendable.
\end{cor}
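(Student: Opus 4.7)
The plan is to reduce this to \cref{main thm1 generelized: Dehn twist on definite} via \cref{lem: exact}. First observe that $\Sigma(2,3,13) = \Sigma(2,3,6\cdot 1 + 7)$ and $\Sigma(2,3,25) = \Sigma(2,3,6\cdot 3 + 7)$ both belong to the family treated there, and each bounds a contractible smooth $4$-manifold by \cite{AK79,Fickle}, so \cref{defn: storngly non-extendable loop} is nonvacuous in these cases.

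Let $X$ be an arbitrary smooth rational homology $4$-ball bounded by such a $Y$. Since $H_\ast(X;\Q) \cong H_\ast(\mathrm{pt};\Q)$, we have $b_1(X)=0$ and $b_2(X)=0$, so the intersection form on the torsion-free part of $H_2(X;\Z)$ is the zero form, which is vacuously positive-definite. Hence $X$ satisfies the hypotheses of \cref{main thm1 generelized: Dehn twist on definite}, so the boundary Dehn twist $t_X \in \Diff(X,\del)$ is not smoothly isotopic to the identity through $\Diff(X,\del)$. By \cref{lem: exact}, this is equivalent to the statement that the Seifert loop $\gamma_S$ does not lie in the image of $r_\ast : \pi_1(\Diff(X)) \to \pi_1(\Diff(Y))$. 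Since $X$ was arbitrary, $\gamma_S$ is strongly non-extendable in the sense of \cref{defn: storngly non-extendable loop}.

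The only point requiring care is that \cref{main thm1 generelized: Dehn twist on definite} was stated for positive-definite $X$, and one should check that the proof there does not secretly require $b_2(X) > 0$. Inspecting the argument, the role of positive-definiteness is to allow the Elkies theorem to give a diagonalization of the intersection form, from which a spin$^c$ structure $\fraks_X$ generating each $(-1)$-summand is chosen; for a rational homology ball this is vacuous and $\fraks$ reduces to $\fraks_0 \cup \fraks_1$ on $W_0 \cup W_1$. The glued manifold $M = W_0 \cup W_1 \cup (-X)$ still has $b^+(M)=2$ and $b_1(M)=0$, the inequality $c_1(\fraks)^2 - \sigma(M) > 0$ is unaffected by gluing $-X$ (which contributes $0$ to both terms rationally), and \cref{thm: RP2 spinc pres} delivers the required contradiction.

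The main obstacle is purely bookkeeping: confirming that the construction of the equivariant cobordism $(W_1,\tilde{\sigma}_1)$ of \cref{lem: equiv cob} and the compatibility of spin$^c$ structures across the gluing $W_1 \cup_\psi (-X)$ still make sense when $X$ carries $2$-torsion in $H_2(X;\Z)$; since spin$^c$ structures on $W_i$ are determined by their first Chern classes up to $2$-torsion and $Y_0, Y_1$ are integral homology spheres (hence have unique spin$^c$ structures), the gluing and the $f$-invariance of $\fraks$ both go through verbatim, completing the proof.
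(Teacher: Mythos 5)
Your proposal is correct and follows essentially the same route as the paper: the observation that a rational homology $4$-ball is vacuously positive-definite with $b_1=0$, so that \cref{main thm1 generelized: Dehn twist on definite} applies (the paper routes this through \cref{main thm1 generelized: Dehn twist on definite 2}), combined with \cref{lem: exact} to translate non-triviality of the boundary Dehn twist into non-extendability of the Seifert loop. Your extra check that the proof of \cref{main thm1 generelized: Dehn twist on definite} survives $b_2(X)=0$ and torsion in $H_2(X;\Z)$ is accurate but not a departure from the paper's argument.
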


\section{Proofs of results on commutators}
\label{section Proof of reluts on commutators}

\begin{proof}[Proof of \cref{thm: relative commutator}]
Let $m=2n$ for $n>0$ or $m=2n+1$ for $n \geq 0$.
Set $Y=S^3_{1/m}(K)$.
Let $W_m(K)$ be the 4-manifold bounded by $Y$ given by the Kirby picture in Figure~\ref{kirby_1}.
The intersection form of $W_m(K)$ is $H$ is $m$ is even, and $\diag(1,-1)$ if $m$ is odd.
\begin{figure}[h!]
\center
\includegraphics[scale=1.0]{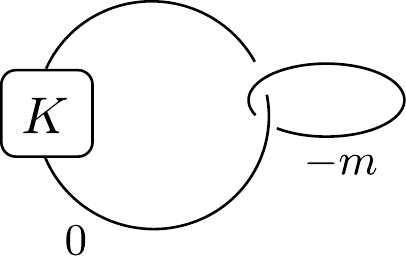}
\caption{The 4-manifold $W_m(K)$.}
\label{kirby_1}
\end{figure}
Define
\[
X = 
\begin{cases}
W_m(K) \# S^2 \times S^2 &\text{  if $m$ is even,} \\
W_m(K) \# S^2 \times S^2 \# (-\CP^2) &\text{  if $m$ is odd.}
\end{cases}
\]
We will prove that $X$ admits a relatively exotic diffeomorphism.

First, when $m$ is even, define $A_i \in \Aut(H_2(X;\Z))$ for $i=1,2$ by 
\begin{align*}
A_1 &= -\id_{H_2(W_m(K))} \oplus \id_{H_2(S^2 \times S^2)},\\
A_2 &= \id_{H_2(W_m(K))} \oplus (-\id_{H_2(S^2 \times S^2)}).
\end{align*} 
When $m$ is odd, fix an isomorphism between $H_2(W_m(K)\#(-\CP^2);\Z)$ and $(-1) \oplus H$. Define $A_i \in \Aut(H_2(X;\Z))$ for $i=1,2$ by 
\begin{align*}
A_1 &= \id_{(-1)} \oplus (-\id_{H}) \oplus \id_{H_2(S^2 \times S^2)},\\
A_2 &= \id_{(-1)} \oplus \id_{H} \oplus (-\id_{H_2(S^2 \times S^2)}).
\end{align*} 

Recall that Wall's theorem on realization for diffeomorphisms can be extended for 4-manifolds with integral homology 3-sphere boundary \cite[Theorem~2, page 136]{Wa64}.
Moreover, the diffeomorphism can be chosen so that it fixes the boundary pointwise (see also  \cite[Corollary~2.2]{RS22}).
Thus we can find $f_i \in \Diff(X,\del)$ such that $(f_i)_\ast=A_i$.

Put $f=[f_1,f_2]$.
We shall prove that $f$ is a relatively exotic diffeomorphism.
First, since $f_\ast=1$, it follows from the Orson--Powell theorem (\cref{thm: OrsonPowell}) that $f$ is topologically isotopic to the identity through $\Homeo(X,\del)$.
Hence it suffices to prove that $f$ is not isotopic to the identity through $\Diff(X,\del)$.

Let $\fraks$ be a spin$^c$ structure on $X$ determined as follows:
If $m$ is even, let $\fraks$ be the unique spin structure.
If $m$ is odd, let $\fraks$ be the spin$^c$ structure with
$c_1(\fraks) = (-1,0,0)$
along the decomposition of $H^2(X)$ to be $(-1) \oplus H \oplus H^2(S^2 \times S^2)$.
Note that $\fraks$ is preserved by both $f_i$ by construction.

Suppose that $f$ is isotopic to the identity through $\Diff(X,\del)$.
Then it follows from \cref{thm: rel T2 spinc pres} that
\[
0=\frac{c_1(\fraks)^2-\sigma(X)}{8} \leq \delta(Y).
\]
However, we have that $\delta(Y)<0$ from the assumption that $V_0(K)>0$, since $2V_0(K)= -d(S_{1/m}(K))=2 \delta(S_{1/m}(K))$. The first equality follows from the work of \cite{rasmussen2003floer, ni2015cosmetic} and the second equality was observed in \cite[Remark 1.1]{LRS18}.
%(See, for example, \cite[Remarks 2.2, 2.3]{KMT22}.)
This is a contradiction.
\end{proof}

\begin{proof}[Proof of \cref{thm: closed commutator}]
Set $N=n-10$.
First, note that we have the isomorphism
\begin{align}
\label{eq: H2 decomp}
H^2(X;\Z) \cong 2H\oplus (-E_8) \oplus N(-1).
\end{align}
Let $\fraks$ be the spin$^c$ structure on $X$ determined by
\[
c_1(\fraks) = (0,0,-1, \ldots, -1)
\]
along this decomposition, where the first two zeros are in $2H$ and $-E_8$ respectively.

Using the decomposition \eqref{eq: H2 decomp},
define $A_i \in \Aut(H_2(X;\Z))$ for $i=1,2$ by
\begin{align*}
A_1 &= (-\id_{H}) \oplus \id_{H} \oplus \id_{-E_8} \oplus \id_{N(-1)},\\
A_1 &= \id_{H} \oplus (-\id_{H}) \oplus \id_{-E_8} \oplus \id_{N(-1)}.
\end{align*} 
By Wall's theorem~\cite[Theorem~2]{Wa64},
there are $f_i \in \Diff(X)$ such that $(f_i)_\ast=A_i$.
Note that both $f_i$ preserve $\fraks$.

Set $f=[f_1,f_2]$.
Suppose that $f$ is smoothly isotopic to the identity.
Then it follows from \cref{thm: T2 spinc pres closed} that $c_1(\fraks)^2 -\sigma(X) \leq 0$, but this is a contradiction by the definition of $\fraks$.
Thus we have that $f$ is not smoothly isotopic to the identity.

On the other hand, since $f_\ast=1$, it follows from a result of Quinn~\cite{Q86} and Perron~\cite{P86} that $f$ is topologically isotopic to the identity.
This completes the proof.
\end{proof}

\begin{rem}[Isotopy vs. Pseudo-isotopy]
\label{rem pseudo isotopy}
All exotic diffeomorphisms detected in this paper on simply-connected 4-manifolds $X$, with or without boundary, are smoothly pseudo-isotopic to the identity, relative to the boundary if $\del X \neq \emptyset$.
This follows from a general result by Kreck~\cite[Theorem~1]{Kreck} for closed 4-manifolds, and a result by Orson and Powell \cite{OrsonPowell2022}, a combination of \cite[Theorem~2.6]{OrsonPowell2022} and \cite[Theorem~7.3]{OrsonPowell2022}, for 4-manifolds with (at least homology 3-sphere) boundary.

Thus our results also detect the difference between smooth isotopy and smooth pseudo-isotopy, as well as other known exotic diffeomorphisms detected by gauge theory. 
See \cite{BD19,Wa20,igusa2021second,Singh21} for other work detecting such differences for non-simply-connected 4-manifolds, obtained by tools different from gauge theory.
\end{rem}

\section{Problems}

We close this paper with questions naturally arising from our results.
See also Question~\ref{ques S4}.
%The Brieskorn spheres $\Sigma(2,3,6n\pm1)$ we considered can be embedded into irreducible 4-manifolds.
%For example, let us consider $\Sigma(2,3,7)$, which bounds $M(2,3,7)$.
%Following the proof of \cref{main thm1 generelized: Dehn twist on definite}, the boundary Dehn twist on $M(2,3,7)$ is not smoothly and relatively isotopic to the identity.
%On the other hand, $M(2,3,7)$ can be embedded in $K3$, so we have an embedding of $\Sigma(2,3,7)$ in $K3$. Similarly, the proof of \cref{thm: More Dehn twists} implies that the boundary Dehn twist on $M(2,3,11)$ is not smoothly and relatively isotopic to the identity, and $M(2,3,11)$ can be embedded in $K3$.
Recall that Milnor fibers $M(2,3,7)$ and $M(2,3,11)$ are smoothly embedded in $K3$.
In particular, $\Sigma(2,3,7)$ and $\Sigma(2,3,11)$ are embedded in $K3$.

\begin{ques}
\label{ques: K3}
Is the Dehn twist on $K3$ along $\Sigma(2,3,7)$ or $\Sigma(2,3,11)$ exotic?
What about the Dehn twist on another irreducible 4-manifold in which $\Sigma(2,3,6n+7)$ or $\Sigma(2,3,6n+11)$ is embedded?
\end{ques}

Note that we have confirmed in \cref{thm: intro stabilization in K3} that the Dehn twists along $\Sigma(2,3,7)$ and $\Sigma(2,3,11)$ are {\it relatively} exotic on $\mathring{K3}$, the punctured $K3$, more strongly on $\mathring{K3}\# S^2 \times S^2$, when we puncture in the complement of Milnor fibers in $K3$.

The next question is about  Milnor fibers.
The Milnor fiber $M(2,3,5)$ can be constructed by an equivariant plumbing discussed in \cite{Orlik-Philip}, and it implies that the standard circle action on $\Sigma(2,3,5)$ extends to $M(2,3,5)$ as a smooth circle action.
On the other hand, we have seen in \cref{thm pi 1 Milnor necleus} that the Seifert loops $\gamma_S \in \pi_1(\Diff(Y))$ for $Y=\Sigma(2,3,7)$ and $\Sigma(2,3,11)$ do not smoothly extends to the corresponding Milnor fibers. 

\begin{ques}
\label{ques: Milnor fiber}
Let $(p,q,r) \neq (2,3,5)$.
Then does the circle action on $\Sigma(p,q,r)$ smoothly extend to $M(p,q,r)$?
More strongly, does the Seifert loop $\gamma_S \in \pi_1(\Diff(\Sigma(p,q,r)))$ smoothly extend to $M(p,q,r)$ in the sense of \cref{def loop ext}?
\end{ques}

By \cref{lem: exact}, this is equivalent to asking the non-triviality of the boundary Dehn twists of Milnor fibers in the relative mapping class groups.

\bibliographystyle{plain}
\bibliography{tex}

\end{document}